\def\bbe{\mathbb E}
\def\bbp{\mathbb P}
\def\bbr{\mathbb R}
\def\bbone{{\mathbbm 1}}
\theoremstyle{plain}
\newtheorem{thm}{Theorem}[section]
\newtheorem{cor}[thm]{Corollary}
\newtheorem{prop}[thm]{Proposition}
\newtheorem{lem}[thm]{Lemma}
\newtheorem{rem}[thm]{Remark}
\numberwithin{equation}{section}
\begin{document}
\title{On Some Operators Associated with Non-Degenerate Symmetric $\alpha$-Stable Probability Measures}
\author{Benjamin Arras\thanks{Universit\'e de Lille, Laboratoire Paul Painlev\'e, CNRS U.M.R. 8524, 
59655 Villeneuve d'Ascq, France; {\tt benjamin.arras@univ-lille.fr}.}\; and Christian Houdr\'e\thanks{Georgia Institute of Technology, School of Mathematics, Atlanta, GA 30332-0160, USA; \newline {\tt houdre@math.gatech.edu}; 
Research supported in part by the grant \# 524678 from the Simons Foundation.
\newline\indent Keywords: Riesz Transform, Stable Probability Measure, Ornstein-Uhlenbeck Semigroup, Bismut-type Formulae, Dimension-Free Estimates, Infinite Divisibility. 
\newline\indent MSC 2010: 42B20, 60E07, 42B15, 26A33.}}

\maketitle

\begin{abstract}
\noindent
Boundedness properties of operators associated with non-degenerate symmetric $\alpha$-stable, 
$\alpha \in (1,2)$, probability measures on $\bbr^d$ are investigated on appropriate, Euclidean or otherwise, 
$L^p$-spaces, 
$p \in (1,+\infty)$.~Our approach is based on first obtaining Bismut-type formulae which lead to 
useful representations for various operators.~In the Euclidean setting, the 
method of transference and one-dimensional multiplier theory combined with fine properties of stable distributions provide dimension-free estimates for the fractional Laplacian. In the non-Euclidean setting, we obtain boundedness results for the non-singular cases as well as dimension-free estimates when the 
reference measure is the rotationally 
invariant $\alpha$-stable probability measure.
\end{abstract}

\section{Introduction}
\noindent
The problem of obtaining dimension-free estimates for the boundedness of Riesz transforms has been 
addressed in many contexts.  Starting with the classical Euclidean case, very sharp results have been obtained, by different methods, ranging from analytic ones to probabilistic ones (\cite{St83, DRF85,B86,BW95,IM96,DV06}). When the reference measure is the Gaussian one, the corresponding problem for the Riesz transform associated with the Ornstein-Uhlenbeck operator was first solved by P.~A.~Meyer \cite{M1} 
using the general Littlewood-Paley-Stein theory of symmetric diffusion semigroup, developed by E.~M.~Stein 
(see \cite{St70}),  in combination with probabilistic tools.~Nowadays, different proofs and refinements of this Gaussian inequality have been provided from  \cite{Gu86,P88,Ar98,G94,LC02} to \cite{DV06,W18}.  
For a more complete set of references on the subject (and related topics in Gaussian Harmonic Analysis),  
the reader is referred to the very recent book \cite{Ur19}.~Moreover, this type of inequalities has further been studied 
and found in different settings such as diffusion theory \cite{B85_1,B85_2}, 
group von Neumann algebras \cite{JMP18}, the Heisenberg group \cite{CMZ96}, infinite dimensional Gaussian analysis  \cite{Shi92,Ar98,CG01} and multidimensional orthogonal expansions on product spaces \cite{W18}. 

The goal of the present manuscript is to analyze the boundedness of various operators naturally associated with non-degenerate and symmetric $\alpha$-stable, $\alpha \in (1,2)$, probability measures on $\bbr^d$ which are built with the help of fractional versions of the gradient, of the Laplacian and of the Ornstein-Uhlenbeck operator.


Let us introduce the strategy of our approach by presenting the very classical case of the fractional Laplacian of order $\alpha \in (1,2)$ in the Euclidean setting.~This operator, with symbol proportional to $-\|\xi\|^{\alpha}$, where $\|\cdot\|$ stands for the Euclidean norm on $\bbr^d$, is the generator of the fractional heat semigroup $(P^\alpha_t)_{t \geq 0}$ associated with the rotationally invariant $\alpha$-stable probability measure on $\bbr^d$ (see Equation \eqref{eq:StheatSM} below with $\mu_\alpha^{\operatorname{rot}}$).~The family of operators of interest in this note is defined, in part, by means of potential operators associated with the fractional Laplacian with symbols given, for all $\xi \in \bbr^d$, all $\lambda_0 >0$ and all $r>0$, by
\begin{align*}
m^{\operatorname{rot}}_{\alpha,\lambda_0,r}(\xi) = \dfrac{1}{\left(\lambda_0 + \frac{\|\xi\|^{\alpha}}{2}\right)^{\frac{r}{2}}}.
\end{align*} 
Moreover, a last building block of our family of operators of interest is a fractional derivative which plays the role of the gradient. Its action on the Schwartz space is given, in the Fourier domain, by multiplication with the vector-valued function $\tau_\alpha^{\operatorname{rot}}$ defined, for all $\xi \in \bbr^d$, by
\begin{align*}
\tau^{\operatorname{rot}}_\alpha(\xi) = \frac{i \alpha}{2} \frac{\xi}{\|\xi\|} \|\xi\|^{\alpha-1}.
\end{align*}
Hence that family of operators of interest is defined (still in the Fourier domain) through multiplication with the symbol given, for all $\xi \in \bbr^d$, all $\lambda_0 >0$ and all $r >0$, by
\begin{align}\label{eq:multiplier_frac}
\tilde{m}^{\operatorname{rot}}_{\alpha,\lambda_0,r}(\xi) = \frac{i \alpha}{2} \frac{\xi}{\|\xi\|}  \dfrac{\|\xi\|^{\alpha-1}}{\left(\lambda_0 + \frac{\|\xi\|^{\alpha}}{2}\right)^{\frac{r}{2}}}. 
\end{align}
This easy-to-establish formula (see the proof of Lemma \ref{lem:Hor_Mihlin}) encodes several interesting facts. First of all, as soon as $r>0$ is large enough, the $L^p(\bbr^d,dx)$-boundedness (with $p\in (1, +\infty)$) of the operator with symbol $\tilde{m}^{\operatorname{rot}}_{\alpha,\lambda_0,r}$ can be established from known results and standard Fourier multiplier theory, since there is still some form of regularity (away from the origin) in this rotationally invariant setting.~Moreover, by choosing a specific value for $r$, a natural definition for the fractional Riesz transform is readily seen.~Indeed, taking $r = r_{\min}(\alpha) =  2(\alpha-1)/\alpha$ and letting $\lambda_0 \rightarrow 0^+$, the multiplier $\tilde{m}^{\operatorname{rot}}_{\alpha,\lambda_0,r}$ boils down, for all $\xi \in \bbr^d$ and all $\alpha \in (1,2)$, to
\begin{align*}
\tilde{m}^{\operatorname{rot}}_{\alpha,0,r_{\min}(\alpha)}(\xi) =  i \frac{\alpha}{2^{\frac{1}{\alpha}}} \frac{\xi}{\|\xi\|},
\end{align*}
which is proportional to the Fourier multiplier of the classical Riesz transform of order $1$.~In the general non-degenerate symmetric $\alpha$-stable situation, it does not seem possible to use standard Fourier multiplier theory to establish the $L^p(\bbr^d,dx)$-boundedness (with $p \in (1,+\infty)$) of the family of operators of interest since regularity is lacking. Indeed,  the multiplier then reads, for all $\xi \in \bbr^d$ with $\xi \ne 0$ and with $e_{\xi} = \frac{\xi}{\|\xi\|}$, all $\lambda_0 >0$ and all $r \geq r_{\min}(\alpha)$, as
\begin{align*}
\tilde{m}_{\alpha,\lambda_0,r}(\xi) & = \left(\int_{\bbr^d} u \left(e^{i \langle u ; \xi \rangle} - 1\right) \nu_\alpha(du) \right) \frac{1}{\left(\lambda_0 + \int_{\mathbb{S}^{d-1}} |\langle \xi ; y \rangle|^\alpha \lambda_1(dy)\right)^{\frac{r}{2}}}, \\
& =  \left(\int_{\bbr^d} u \left(e^{i \langle u ; e_{\xi} \rangle} - 1\right) \nu_\alpha(du) \right) \frac{\|\xi\|^{\alpha-1}}{\left(\lambda_0 + \|\xi\|^{\alpha} \int_{\mathbb{S}^{d-1}} |\langle e_{\xi} ; y \rangle|^\alpha \lambda_1(dy)\right)^{\frac{r}{2}}},
\end{align*}
where $\nu_\alpha$ is a non-degenerate symmetric L\'evy measure on $\bbr^d$ satisfying \eqref{eq:scale} and $\lambda_1$ is the associated spectral measure (see Equation \eqref{eq:rep_spectral_measure} below).~Note that the symbol $-\int_{\mathbb{S}^{d-1}} |\langle \xi ; y \rangle|^\alpha \lambda_1(dy)$ can be very different from the one of the fractional Laplacian since $\lambda_1$ might encode anisotropy by being discrete or singular (with respect to the uniform measure on the sphere).~Also, note that the homogeneous fractional Riesz transform in this general case (non-degenerate and symmetric) is given through the following Fourier multiplier: for all $\xi \in \bbr^d$ with $\xi \ne 0$,
\begin{align}\label{eq:homo_frac_Riesz_transform}
\tilde{m}_{\alpha,0,r_{\min}(\alpha)}(\xi) & = \dfrac{\int_{\bbr^d} u \left(e^{i \langle u ; e_{\xi} \rangle} - 1\right) \nu_\alpha(du)}{\bigg(\int_{\mathbb{S}^{d-1}} |\langle e_{\xi} ; y \rangle|^\alpha \lambda_1(dy)\bigg)^{\frac{\alpha-1}{\alpha}}}.
\end{align}
Thus, one has to bypass this lack of regularity (and thus Fourier methods) in order to establish the $L^p(\bbr^d,dx)$-boundedness (with $p \in (1, +\infty)$) of the corresponding operators and, in particular, of the associated fractional Riesz transforms.~Our approach then relies on stochastic representations, known as Bismut-type formulae (see Proposition \ref{prop:Bismut}), for the action of the fractional gradient on the different stable semigroups.~These allow us to completely circumvent the lack of regularity in the general case and to go beyond the classical theory of Fourier multiplier in the Euclidean setting. Our analysis focuses also on obtaining sharp $L^p(\bbr^d,dx)$-$L^p(\bbr^d,dx)$ estimates which produce dimension-free bounds for the fractional Laplacian.

Let us further describe the content of our notes. The next section introduces notations and definitions, used throughout, 
and proves Bismut-type formulae related to the non-degenerate 
symmetric $\alpha$-stable, $\alpha \in (1,2)$, probability measures in $\bbr^d$ 
(Proposition~\ref{prop:Bismut}).~In turn, these formulae imply  
representations of the operators of interest as averages of (singular) integral operators (Proposition \ref{prop:Riesz_Representation}).~In Section~\ref{sec:CC}, the fractional Euclidean setting is completely 
analyzed starting with Lemma~\ref{lem:Hor_Mihlin} which provides a representation 
for the Riesz transform associated with the fractional Laplacian as the composition 
of the classical homogeneous Riesz transform with a certain $L^p(\bbr^d,dx)$-bounded radial multiplier operator. 
Then, via a transference argument the $L^p(\bbr^d,dx)$-boundedness of the Riesz transforms 
associated with some fractional operators 
(stemming from the non-degenerate symmetric $\alpha$-stable probability measures) is obtained  
in Theorem~\ref{prop:Riesz_sharp} as a particular case; leading in 
Corollary~\ref{cor:dimension_free_euclidean} to a dimension-free bound for the rotationally 
invariant $\alpha$-stable probability measure.~The $L^p(\bbr^d,dx)$-boundedness (with $p \in (1,+\infty)$) of the homogeneous fractional Riesz transform, with multiplier given by \eqref{eq:homo_frac_Riesz_transform}, is analyzed in Corollary~\ref{cor_fractional_homo_Riesz_transform} with the help of Lemma \ref{lem:double_truncation} of the Appendix.~Moreover, an integration by parts and standard 
semigroup arguments, lead to reverse inequalities in Proposition~\ref{prop:lower_bound_Riesz_stable_order_1}.  
Finally, in Section~\ref{sec:SC}, a similar analysis is performed when the reference measure is a 
non-degenerate symmetric $\alpha$-stable probability measure with $\alpha \in (1,2)$.~There, Theorem~\ref{thm:First} provides a general stable analogue of the $L^p$-boundedness of the Gaussian operators and implies a dimension-free estimate when the reference stable measure is rotationally invariant.~The paper concludes with an Appendix of technical results used in the previous sections.  

The main results presented below are Theorem~\ref{prop:Riesz_sharp}, 
Corollary~\ref{cor:dimension_free_euclidean}, Corollary~\ref{cor:dimension_euclidean2} and Corollary~\ref{cor_fractional_homo_Riesz_transform} for the homogeneous case and 
Theorem~\ref{thm:First} and Corollary~\ref{thm:Second} for the stable case. In particular, when the underlying 
measure is a rotationally invariant $\alpha$-stable one, these results provide 
dimension-free estimates 
(see \eqref{ineq:continuity_Riesz_Sharp_2} and \eqref{ineq:dimensionfree_Riesz_Stable}).  
Our approach is  rather close to the one initiated by Pisier in \cite{P88} where the transference method of Coifman and Weiss (\cite{GW77}) was used to prove dimension-free bounds, based on very specific 
representations of the classical and Gaussian Riesz transforms.  
Indeed, this methodology is particularly efficient to produce dimension-free estimates for 
multi-dimensional inequalities and has been applied and extended widely in quite a variety of contexts, e.g., \cite{M2,AC94,CMZ96,LP98,LP041,LP042,LP06,ST14}.

\section{Notations and Preliminaries}
\noindent
Throughout, the Euclidean norm on $\bbr^d$ is denoted by $\|\cdot\|$ and the associated 
inner product by $\langle ;\rangle$, while $\gamma$ is  the standard Gaussian probability measure on $\bbr^d$ defined 
through its characteristic function given, for all $\xi \in \bbr^d$, by 
\begin{align}\label{def:gauss}
\hat{\gamma}(\xi) := \int_{\bbr^d} e^{i \langle y ; \xi\rangle} \gamma(dy) =  \exp\left(- \frac{\|\xi\|^2}{2}\right).
\end{align}
Throughout, a L\'evy measure is a positive Borel measure on $\mathbb{R}^d$ such that $\nu(\{0\})=0$ and 
$\int_{\mathbb{R}^d} (1\wedge \|u\|^2)\nu(du)<+\infty$.  Now, let 
$\alpha \in (0,2)$ and let $\nu_\alpha$ be a (non-degenerate) L\'evy measure on $\bbr^d$ such that, for all $c>0$,
\begin{align}\label{eq:scale}
c^{-\alpha}T_c(\nu_\alpha)(du)=\nu_\alpha(du),
\end{align}
where $T_c(\nu_\alpha)(B):=\nu_{\alpha}(B/c)$, for all $B$ Borel 
subset of $\bbr^d$.  
Then, necessarily,  \cite[Theorem $14.3$, (ii)]{S},
$\nu_\alpha$ is given by 
\begin{align}\label{eq:polar}
\nu_\alpha(du) = \bbone_{(0,+\infty)}(r) \bbone_{\mathbb{S}^{d-1}}(y) \dfrac{dr}{r^{\alpha+1}}\sigma(dy),
\end{align}
where $\sigma$ is a finite positive measure on $\mathbb{S}^{d-1}$, the Euclidean unit sphere of $\bbr^d$.   
{\it In the sequel, the measure $\sigma$ is assumed to be symmetric.}

Let $\mu_\alpha$ be the $\alpha$-stable probability measure on $\bbr^d$ defined through its characteristic function, for all 
$\xi \in \bbr^d$, by
\begin{align}\label{def:stable}
\hat{\mu}_\alpha(\xi) := \int_{\bbr^d} e^{i \langle y ; \xi\rangle} \mu_\alpha(dy) = \left\{
    \begin{array}{ll}
        \exp\left(\int_{\bbr^d} (e^{i \langle u;\xi \rangle}-1-i\langle \xi;u\rangle) \nu_\alpha(du)\right),  & \alpha \in (1,2), \\
        \exp\left(\int_{\bbr^d}\left(e^{i \langle u;\xi \rangle} - 1 -i \langle \xi;u\rangle \bbone_{\|u \|\leq 1}\right) \nu_1(du) \right), & \alpha=1, \\
	\exp\left(\int_{\bbr^d} (e^{i \langle u;\xi \rangle}-1) \nu_\alpha(du)\right), & \alpha \in (0,1).
    \end{array}
\right.
\end{align}
Since $\sigma$ is symmetric, \cite[Theorem 14.13.]{S} provides a useful alternative representation for the characteristic function 
$\hat{\mu}_\alpha$ namely, for all $\xi \in \bbr^d$, 
\begin{align}\label{eq:rep_spectral_measure}
\hat{\mu}_\alpha(\xi) 
= \exp\left(- \int_{\mathbb{S}^{d-1}} |\langle y;\xi \rangle|^\alpha \lambda_1(dy)\right),
\end{align}
where $\lambda_1$, the spectral measure, is a symmetric finite positive measure on 
$\mathbb{S}^{d-1}$,  which is 
proportional to $\sigma$, and non-degenerate in that   
\begin{align}\label{eq:non_degeneracy}
\underset{y \in \mathbb{S}^{d-1}}{\inf} \int_{\mathbb{S}^{d-1}} 
|\langle y;x \rangle|^{\alpha} \lambda_1(dx) \ne 0.  
\end{align}
Let $\lambda$ denote a uniform measure on the Euclidean unit sphere of $\bbr^d$ (in the sense of Sato see \cite[Chapter $3$ page $88$]{S}). For $\alpha \in (1,2)$, 
let $\nu_{\alpha}^{\rm {rot}}$ be the L\'evy measure on $\bbr^d$ with polar decomposition 
\begin{align}\label{eq:Levy_Rot}
\nu_{\alpha}^{\rm {rot}}(du) = c_{\alpha,d} \bbone_{(0,+\infty)}(r) \bbone_{\mathbb{S}^{d-1}}(y) \dfrac{dr}{r^{\alpha+1}}\lambda(dy),
\end{align}
where,
\begin{align}\label{eq:renorm}
c_{\alpha,d} = \dfrac{-\alpha (\alpha-1) \Gamma\left(\frac{\alpha+d}{2}\right)}{4 \cos \left(\frac{\alpha\pi}{2}\right)\Gamma\left(\frac{\alpha+1}{2}\right) \pi^{\frac{d-1}{2}}\Gamma(2-\alpha)}, 
\end{align}
and where $\Gamma$ is the usual Euler Gamma function.~Finally, let $\mu_{\alpha}^{\rm {rot}}$ denote the rotationally 
invariant $\alpha$-stable probability measure on $\bbr^d$ with L\'evy measure given by \eqref{eq:Levy_Rot} and with the following normalization: for all $\xi \in \bbr^d$,
\begin{align}\label{eq:charac_rot}
\hat{\mu}_{\alpha}^{\rm{rot}}\left(\xi\right) = \exp\left(-\frac{\|\xi\|^\alpha}{2}\right).
\end{align}
For $\alpha \in (1,2)$, let $\nu_{\alpha,1}$ be the L\'evy measure on $\bbr$ given by
\begin{align}\label{sym:1}
\nu_{\alpha,1}(du) = \frac{c_{\alpha}du}{|u|^{\alpha+1}},
\end{align}
and with,
\begin{align}
c_{\alpha} = \left( \dfrac{-\alpha (\alpha -1)}{4 \Gamma(2-\alpha)\cos\left(\frac{\alpha \pi}{2}\right)} \right).
\end{align}
Let us denote by $\mu_{\alpha,1}$ the $\alpha$-stable probability measure on $\bbr$ with L\'evy measure $\nu_{\alpha,1}$ and with characteristic function defined, for all $\xi \in \bbr$, by
\begin{align}
\hat{\mu}_{\alpha,1}(\xi) = \exp \left(\int_{\bbr} \left(e^{i \langle u;\xi\rangle}-1-i \langle u;\xi \rangle\right) \nu_{\alpha,1}(du)\right) = \exp\left(-\frac{|\xi|^{\alpha}}{2}\right).
\end{align}
Finally, let us denote by $\mu_{\alpha,d} = \mu_{\alpha,1} \otimes \dots \otimes \mu_{\alpha,1}$ the product probability measure on $\bbr^d$ which characteristic function is given, for all $\xi \in \bbr^d$, by
\begin{align}\label{eq:StableIndAxes}
\hat{\mu}_{\alpha,d}(\xi) = \prod_{k=1}^d \hat{\mu}_{\alpha,1}(\xi_k) = \exp\left(\int_{\bbr^d} \left(e^{i \langle \xi;u \rangle}-1-i\langle \xi;u \rangle\right)\nu_{\alpha,d}(du)\right),
\end{align}
and with,
\begin{align}\label{eq:LevyIndAxes}
\nu_{\alpha,d}(du) = \sum_{k=1}^d \delta_0(du_1) \otimes \dots \otimes \delta_0(du_{k-1}) \otimes \nu_{\alpha,1}(du_k) \otimes \delta_0(du_{k+1}) \otimes \dots \otimes \delta_0(du_d),
\end{align}
where $\delta_0$ is the Dirac measure at $0$.~In the sequel, $\mathcal{S}(\bbr^d)$ is the Schwartz space of infinitely differentiable functions which, with all their derivatives of any order, are rapidly decreasing at infinity, and $\mathcal{F}$ is the Fourier transform operator given, for all $f \in \mathcal{S}(\bbr^d)$ and all $\xi \in \bbr^d$, by
\begin{align*}
\mathcal{F}(f)(\xi) = \int_{\bbr^d} f(x) e^{- i \langle x; \xi \rangle} dx.
\end{align*}
On $\mathcal{S}(\bbr^d)$, the Fourier transform is an isomorphism and the following well-known inversion formula holds
\begin{align*}
f(x) = \frac{1}{(2\pi)^d} \int_{\bbr^d} \mathcal{F}(f)(\xi)e^{i \langle \xi ; x\rangle} d\xi,\quad x\in \bbr^d.
\end{align*}
Next, $\mathcal{C}_c^\infty(\bbr^d)$ denotes the space of compactly supported 
infinitely differentiable functions on $\bbr^d$ and $\|\cdot\|_{\infty,\bbr}$ is 
the supremum norm on $\bbr$ given by $\|f\|_{\infty,\bbr} =\sup_{x\in \bbr} |f(x)|$.~As usual, for $p \in (1,+\infty)$, $L^p(\mu_\alpha)$ is the space of (equivalence classes with respect to $\mu_\alpha$-almost everywhere equality) of functions which are Borel measurable and which are $p$-integrable with respect to the probability measure $\mu_\alpha$. This space is endowed with the norm $\|\cdot\|_{L^p(\mu_\alpha)}$ defined, for all suitable $f$, by
\begin{align*}
\|f\|_{L^p(\mu_\alpha)} := \left(\int_{\bbr^d} |f(x)|^p \mu_\alpha(dx)\right)^{\frac{1}{p}}.
\end{align*}
Similarly, and still as usual, for $p \in (1, +\infty)$, $L^p(\bbr^d,dx)$ denotes the classical Lebesgue space where the reference measure is the Lebesgue measure. It is endowed with the norm $\|\cdot\|_{L^p(\bbr^d,dx)}$ defined, for all suitable $f$, by
\begin{align*}
\|f\|_{L^p(\bbr^d,dx)} := \left(\int_{\bbr^d} |f(x)|^p dx\right)^{\frac{1}{p}}.
\end{align*}
At first, let us introduce four semigroups of operators acting on $\mathcal{S}(\bbr^d)$ naturally associated with $\gamma$ and $\mu_\alpha$. Let $(P^H_t)_{t\geq 0}$, $(P^{\gamma}_t)_{t\geq 0}$, $(P^{\alpha}_t)_{t\geq 0}$ and $(P^{\nu_\alpha}_t)_{t\geq 0}$ be defined, for all $f\in \mathcal{S}(\bbr^d)$, all $x\in \bbr^d$ and all $t\geq 0$, by
\begin{align}\label{eq:heatSM}
P^H_t(f)(x) = \int_{\mathbb{R}^d} f(x+\sqrt{t} y) \gamma(dy),
\end{align}
\begin{align}\label{eq:OUSM}
P^{\gamma}_t(f)(x) = \int_{\mathbb{R}^d} f(xe^{-t}+\sqrt{1-e^{-2t}} y) \gamma(dy),
\end{align}
\begin{align}\label{eq:StheatSM}
P^{\alpha}_t(f)(x) = \int_{\mathbb{R}^d} f(x+ t^{\frac{1}{\alpha}} y) \mu_\alpha(dy),
\end{align}
\begin{align}\label{eq:StOUSM}
P^{\nu_\alpha}_t(f)(x) = \int_{\mathbb{R}^d} f(xe^{-t}+(1-e^{-\alpha t})^{\frac{1}{\alpha}} y)\mu_\alpha(dy).
\end{align}
The semigroups \eqref{eq:heatSM} and \eqref{eq:StheatSM} are special cases of convolution semigroups for which a full theory has been well-developed (see \cite{NJ02_1,NJ02_2,NJ02_3}). The semigroup \eqref{eq:OUSM} is the classical Gaussian Ornstein-Uhlenbeck (OU) semigroup and the semigroup \eqref{eq:StOUSM} is the stable OU semigroup associated with the $\alpha$-stable probability measure $\mu_\alpha$ 
which has recently been put forward, in the context of Stein's method for self-decomposable distributions, in  \cite{AH18_1,AH19_2,AH20_3}. 

Next, let us introduce a fifth semigroup linked to $(P^{\nu_\alpha}_t)_{t\geq 0}$ 
which has already been introduced in \cite[Lemma $5.12$ and Remark $5.13$]{AH20_3} for the rotationally 
invariant $\alpha$-stable probability measure on $\bbr^d$.  
Thanks to the representation \eqref{eq:StOUSM}, the stable OU semigroup, $(P^{\nu_\alpha}_t)_{t\geq 0}$, admits extensions to all $L^p(\mu_\alpha)$, $p\in (1,+\infty)$, which form compatible $C_0$-semigroups of Markovian contractions on $L^p(\mu_\alpha)$. Then, let $((P^{\nu_\alpha}_t)^*)_{t\geq 0}$ be the dual semigroup acting in a compatible way on each $L^p(\mu_\alpha)$, $p \in (1,+\infty)$ (see, e.g., \cite[Chapter $1.10$]{Pa83} for general properties of dual semigroup).  With the help of 
Theorem~\ref{def:squared_Mehler_L2} of the Appendix, let us define the ``carr\'e de Mehler" semigroup 
$(\mathcal{P}_t)_{t\geq0}$ on $L^2(\mu_\alpha)$, for all $f \in L^2(\mu_\alpha)$ and all $t\geq 0$, by
\begin{align*}
\mathcal{P}_t(f) = P_{\frac{t}{\alpha}}^{\nu_\alpha} \circ (P_{\frac{t}{\alpha}}^{\nu_\alpha})^*(f) = (P_{\frac{t}{\alpha}}^{\nu_\alpha})^* \circ P_{\frac{t}{\alpha}}^{\nu_\alpha}(f).
\end{align*}
Finally, from Proposition~\ref{prop:squared_Mehler_Lp} of the Appendix, 
the semigroup $(\mathcal{P}_t)_{t\geq 0}$ extends to a $C_0$-semigroup of Markovian contractions on 
each $L^p(\mu_\alpha)$, $p\in (1,+\infty)$, and admits the following representation, 
for all $p \in (1, +\infty)$, all $f \in L^p(\mu_\alpha)$ and all $t\ge 0$,
\begin{align}\label{eq:squared_Mehler}
\mathcal{P}_t(f) = P_{\frac{t}{\alpha}}^{\nu_\alpha} \circ (P_{\frac{t}{\alpha}}^{\nu_\alpha})^*(f) = (P_{\frac{t}{\alpha}}^{\nu_\alpha})^* \circ P_{\frac{t}{\alpha}}^{\nu_\alpha}(f).
\end{align}
\noindent
In the sequel, let us denote by $\partial_{x_k}$ the partial derivative of order $1$ in the variable $x_k$, by $\nabla$ the gradient operator, by $\Delta$ the Laplacian operator and by $D^{\alpha-1}$, $(D^{\alpha-1})^*$ and $\mathbf{D}^{\alpha-1}$ the fractional operators defined, for all $f \in \mathcal{S}(\bbr^d)$ and all $x\in \bbr^d$, by
\begin{align}\label{eq:FracGrad}
D^{\alpha-1}(f)(x) :=\int_{\bbr^d} (f(x+u)-f(x)) u \nu_\alpha(du),
\end{align}
\begin{align}\label{eq:FracGradDual}
(D^{\alpha-1})^*(f)(x) := \int_{\bbr^d} (f(x-u)-f(x)) u \nu_\alpha(du),
\end{align}
\begin{align}\label{eq:FracGradWB}
\mathbf{D}^{\alpha-1}(f)(x) :=\frac{1}{2}\left(D^{\alpha-1}\left(f\right)(x)-(D^{\alpha-1})^*\left(f\right)(x)\right) .
\end{align}
Note that, since the spherical part of the $\alpha$-stable L\'evy measure is symmetric, for all $f \in \mathcal{S}(\bbr^d)$ and all $x\in \bbr^d$, $\mathbf{D}^{\alpha-1}(f)(x)=D^{\alpha-1}\left(f\right)(x)$.
The next result provides representations for the actions of the operators $\nabla$, $D^{\alpha-1}$ and $\mathbf{D}^{\alpha-1}$ on these semigroups. In the Gaussian~Ornstein-Uhlenbeck case, this representation is the well-known Bismut formula. 

\begin{prop}\label{prop:Bismut}
Let $\alpha \in (1,2)$, let $\nu_\alpha$ be a non-degenerate symmetric L\'evy measure on $\bbr^d$ satisfying \eqref{eq:scale} and let $\mu_\alpha$ be the associated $\alpha$-stable probability measure on $\bbr^d$ given by \eqref{def:stable}. Let $(P^H_t)_{t\geq 0}$, $(P^\gamma_t)_{t\geq 0}$, $(P^\alpha_t)_{t\geq 0}$, $(P^{\nu_\alpha}_t)_{t\geq 0}$ be the semigroups respectively defined by \eqref{eq:heatSM}, \eqref{eq:OUSM}, \eqref{eq:StheatSM} and \eqref{eq:StOUSM}.
Then, for all $f\in \mathcal{S}(\bbr^d)$, all $x\in \bbr^d$ and all $t>0$, 
\begin{align}\label{eq:bismut_heat}
\nabla\left(P^H_t(f)\right)(x) = \frac{1}{\sqrt{t}} \int_{\bbr^d}yf(x+\sqrt{t} y) \gamma(dy),
\end{align}
\begin{align}\label{eq:bismut_OU}
\nabla\left(P^\gamma_t(f)\right)(x) =\frac{e^{-t}}{\sqrt{1-e^{-2t}}} \int_{\bbr^d} y f(xe^{-t}+\sqrt{1-e^{-2t}}y) \gamma(dy),
\end{align}
\begin{align}\label{eq:bismut_stable_heat}
\mathbf{D}^{\alpha-1}(P^\alpha_t(f))(x) = \dfrac{1}{t^{1-\frac{1}{\alpha}}} \int_{\bbr^d} y f(x+t^{\frac{1}{\alpha}}y)\mu_\alpha(dy),
\end{align}
\begin{align}\label{eq:bismut_stable_OU}
\mathbf{D}^{\alpha-1}(P^{\nu_\alpha}_t(f))(x) = \dfrac{e^{-(\alpha-1)t}}{\left(1-e^{-\alpha t}\right)^{1-\frac{1}{\alpha}}} \int_{\bbr^d} y f(xe^{-t}+(1-e^{-\alpha t})^{\frac{1}{\alpha}}y) \mu_\alpha(dy).
\end{align}
\end{prop}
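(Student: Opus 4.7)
The four Bismut-type identities split into the two heat-semigroup statements \eqref{eq:bismut_heat} and \eqref{eq:bismut_stable_heat}, and the two Ornstein-Uhlenbeck statements \eqref{eq:bismut_OU} and \eqref{eq:bismut_stable_OU}. Since $P^\gamma_t f(x) = (P^H_{1-e^{-2t}} f)(x e^{-t})$ and, analogously, $P^{\nu_\alpha}_t f(x) = (P^\alpha_{1-e^{-\alpha t}} f)(x e^{-t})$, the OU statements reduce to the heat statements via a (local) chain rule in the Gaussian case and a scaling-driven commutation in the stable case, so I would establish the two heat formulae first.

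For \eqref{eq:bismut_heat}, the plan is to differentiate under the integral, rewrite $\nabla_x f(x+\sqrt{t}\,y) = t^{-1/2}\nabla_y f(x+\sqrt{t}\,y)$, and integrate by parts in $y$ against the standard Gaussian density $\phi$ using $\nabla \phi(y) = -y\,\phi(y)$; this is classical and requires nothing new for $f \in \mathcal{S}(\bbr^d)$. For \eqref{eq:bismut_stable_heat}, since $\mu_\alpha$ has no tractable density, I would work on the Fourier side. The symbol of the left-hand side is
\[
\bigg(\int_{\bbr^d}(e^{i\langle u,\xi\rangle} - 1)\,u\,\nu_\alpha(du)\bigg)\,\hat{\mu}_\alpha(t^{1/\alpha}\xi)\,\mathcal{F}(f)(\xi) = i\,\bigg(\int_{\bbr^d}\sin\langle u,\xi\rangle\,u\,\nu_\alpha(du)\bigg)\,\hat{\mu}_\alpha(t^{1/\alpha}\xi)\,\mathcal{F}(f)(\xi),
\]
after the symmetry of $\nu_\alpha$ is used to cancel the purely real and the odd-in-$u$ pieces. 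The right-hand side is handled by $\int_{\bbr^d} y\,e^{i\langle y,t^{1/\alpha}\xi\rangle}\mu_\alpha(dy) = -i\,\nabla_\eta \hat{\mu}_\alpha(\eta)|_{\eta = t^{1/\alpha}\xi}$; writing $\hat{\mu}_\alpha = \exp(\psi)$ with $\psi$ the (symmetric) L\'evy exponent, and then invoking \eqref{eq:scale} in polar coordinates to obtain $\int u\sin\langle u,t^{1/\alpha}\xi\rangle\nu_\alpha(du) = t^{1-1/\alpha}\int u\sin\langle u,\xi\rangle\nu_\alpha(du)$, one checks that the two Fourier symbols match exactly.

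For \eqref{eq:bismut_OU}, the chain rule applied to $P^\gamma_t f(x) = (P^H_{1-e^{-2t}} f)(x e^{-t})$ gives $\nabla P^\gamma_t f(x) = e^{-t}(\nabla P^H_{1-e^{-2t}} f)(x e^{-t})$, and substituting \eqref{eq:bismut_heat} produces the claim. For \eqref{eq:bismut_stable_OU}, the non-locality of $\mathbf{D}^{\alpha-1}$ forces me to replace the chain rule by a commutation identity. A direct change of variables in the definition of $\mathbf{D}^{\alpha-1}$, with the dilation $c = e^t$ absorbed using \eqref{eq:scale}, yields
\[
\mathbf{D}^{\alpha-1}\bigl(h(e^{-t}\,\cdot)\bigr)(x) = e^{-(\alpha-1)t}\,(\mathbf{D}^{\alpha-1} h)(x e^{-t}),
\]
and applying this to $h = P^\alpha_{1-e^{-\alpha t}} f$ together with \eqref{eq:bismut_stable_heat} gives the announced formula. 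The main place where I expect to be careful is the Fourier computation for \eqref{eq:bismut_stable_heat}: one must verify that the pieces discarded by symmetry are not secretly non-integrable (the estimate $\int_{\|u\|>1}\|u\|\,\nu_\alpha(du) < \infty$, valid for $\alpha \in (1,2)$, is the decisive ingredient), and that the $t$-powers coming from the polar scaling combine to exactly $t^{1-1/\alpha}$, so that the left- and right-hand multipliers coincide.
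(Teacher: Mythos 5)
Your proof is correct, and for \eqref{eq:bismut_stable_heat} it coincides with the paper's argument: the paper likewise works on the Fourier side, identifies the symbol of $\mathbf{D}^{\alpha-1}P^\alpha_t$ as $\tau_\alpha(\xi)\hat{\mu}_\alpha(t^{1/\alpha}\xi)$ with $\tau_\alpha(\xi)=\int_{\bbr^d}u(e^{i\langle u;\xi\rangle}-1)\nu_\alpha(du)$, uses $\nabla(\hat{\mu}_\alpha)(\xi)=i\tau_\alpha(\xi)\hat{\mu}_\alpha(\xi)$ together with the scaling $\tau_\alpha(t^{1/\alpha}\xi)=t^{1-1/\alpha}\tau_\alpha(\xi)$ from \eqref{eq:scale}, and closes with $\int_{\bbr^d}y\,e^{i\langle y;\eta\rangle}\mu_\alpha(dy)=-i\nabla\hat{\mu}_\alpha(\eta)$ (justified by $\int\|y\|\mu_\alpha(dy)<+\infty$ for $\alpha\in(1,2)$). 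You diverge on the remaining three identities, and your route is arguably cleaner. The paper obtains the two Gaussian formulae by a limiting argument: it computes $\tau_\alpha^{\rm rot}$ explicitly for the rotationally invariant L\'evy measure \eqref{eq:Levy_Rot}, shows $\tau_\alpha^{\rm rot}(\xi)\to i\xi$ as $\alpha\to 2$, and passes to the limit in \eqref{eq:repD}; your direct integration by parts against the Gaussian density and the chain rule applied to $P^\gamma_t f(x)=(P^H_{1-e^{-2t}}f)(xe^{-t})$ avoid the limit interchange entirely and are self-contained. For \eqref{eq:bismut_stable_OU} the paper only says the reasoning is ``completely analogous'' (i.e., redo the Fourier computation with the Mehler-type kernel); your reduction via the subordination identity $P^{\nu_\alpha}_tf(x)=(P^\alpha_{1-e^{-\alpha t}}f)(xe^{-t})$ combined with the dilation--commutation relation $\mathbf{D}^{\alpha-1}(h(e^{-t}\cdot))(x)=e^{-(\alpha-1)t}(\mathbf{D}^{\alpha-1}h)(xe^{-t})$ — itself an immediate consequence of \eqref{eq:scale} under the change of variables $u=e^tv$ in \eqref{eq:FracGrad} — makes the derivation genuinely one-line and exposes the structural reason why the prefactor $e^{-(\alpha-1)t}(1-e^{-\alpha t})^{-(1-1/\alpha)}$ appears. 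What the paper's route buys in exchange is a single unified computation from which all four formulae (including the classical Bismut formula) emerge as the stable family degenerates to the Gaussian one. Your flagged integrability concerns (the cancellation of the odd-in-$u$ real part requiring $\int_{\|u\|>1}\|u\|\nu_\alpha(du)<+\infty$) are exactly the right ones and are satisfied for $\alpha\in(1,2)$.
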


\begin{proof}
The proof is divided into two steps. In the first step, the representation formulae are proved for the stable cases with $\alpha \in (1,2)$. Then, in the second step, a suitable renormalization and a limiting argument (as $\alpha \rightarrow 2$) allow to get the corresponding Gaussian representations.\\
\\ 
\textit{Step 1}: Let $\alpha\in (1,2)$ and let $\nu_\alpha$ be a L\'evy measure on $\bbr^d$ satisfying \eqref{eq:scale} such that its spherical component $\sigma$ is symmetric.~By Fourier inversion, for all $f \in \mathcal{S}(\bbr^d)$, all $x \in \bbr^d$ and all $t\geq 0$,
\begin{align*}
P^\alpha_t(f)(x) &=  \int_{\mathbb{R}^d} f(x+ t^{\frac{1}{\alpha}} y) \mu_\alpha(dy), \\
&= \frac{1}{(2\pi)^d} \int_{\bbr^d} \mathcal{F}(f)(\xi) e^{i \langle x;\xi\rangle} \hat{\mu}_\alpha\left(t^{\frac{1}{\alpha}} \xi\right) d\xi.
\end{align*}
Now, again, using Fourier inversion, for all $f \in \mathcal{S}(\bbr^d)$ and all $x\in \bbr^d$,
\begin{align*}
D^{\alpha-1}(f)(x) &=  \int_{\bbr^d} (f(x+u)-f(x)) u \nu_\alpha(du),\\
& = \frac{1}{(2 \pi)^d} \int_{\bbr^d} \mathcal{F}(f)(\xi) e^{i \langle \xi; x\rangle} \tau_\alpha(\xi) d\xi,
\end{align*}
where, for all $ \xi \in \bbr^d$, 
\begin{align}\label{eq:tau_alpha}
\tau_\alpha(\xi) := \int_{\bbr^d} u \left(e^{i \langle u;\xi \rangle}-1\right) \nu_\alpha(du).
\end{align}
Then, for all $f\in \mathcal{S}(\bbr^d)$, all $x\in \bbr^d$ and all $t\geq 0$,
\begin{align*}
D^{\alpha-1}(P^\alpha_t(f))(x) &= \frac{1}{(2\pi)^d} \int_{\bbr^d} \mathcal{F}(f)(\xi) e^{i \langle x; \xi \rangle} \hat{\mu}_\alpha\left(t^{\frac{1}{\alpha}} \xi\right)\tau_\alpha(\xi) d\xi.
\end{align*}
Next, observe that, for all $j \in \{1, \dots, d\}$ and all $\xi \in \bbr^d$, 
\begin{align*}
\partial_{\xi_j}\left(\hat{\mu}_\alpha(\xi)\right) &= \partial_{\xi_j} \left(\exp\left(\int_{\bbr^d} (e^{i \langle u;\xi \rangle}-1-i\langle \xi;u\rangle) \nu_\alpha(du)\right)\right), \\
&= i \int_{\bbr^d} u_j \left(e^{i \langle u; \xi \rangle}-1\right) \hat{\mu}_\alpha(\xi),
\end{align*}
so that, for all $\xi \in \bbr^d$,
\begin{align*}
\nabla\left(\hat{\mu}_\alpha\right)(\xi) = i \tau_\alpha(\xi) \hat{\mu}_\alpha(\xi).
\end{align*}
Thus, for all $\xi \in \bbr^d$ and all $t\ge 0$,
\begin{align*}
\nabla\left(\hat{\mu}_\alpha (t^{\frac{1}{\alpha}} \xi) \right) = i t^{\frac{1}{\alpha}} \tau_\alpha(t^{\frac{1}{\alpha}} \xi) \hat{\mu}_\alpha(t^{\frac{1}{\alpha}} \xi).
\end{align*}
Moreover, by scale invariance, for all $t>0$,
\begin{align*}
\tau_\alpha\left(t^{\frac{1}{\alpha}}\xi\right)& = \int_{\bbr^d} u \left(e^{i \langle u;t^{\frac{1}{\alpha}} \xi \rangle}-1\right) \nu_\alpha(du) ,\\
& =t^{1-\frac{1}{\alpha}}\tau_\alpha(\xi),
\end{align*}
so that, for all $\xi \in \bbr^d$ and all $t>0$,
\begin{align*}
\nabla\left(\hat{\mu}_\alpha (t^{\frac{1}{\alpha}} \xi) \right)=i t \tau_\alpha(\xi)\hat{\mu}_\alpha(t^{\frac{1}{\alpha}} \xi).
\end{align*}
Thus, for all $f\in \mathcal{S}(\bbr^d)$, all $x\in \bbr^d$ and all $t>0$,
\begin{align*}
D^{\alpha-1}(P^\alpha_t(f))(x) &= \frac{1}{(2\pi)^d} \int_{\bbr^d} \mathcal{F}(f)(\xi) e^{i \langle x; \xi \rangle} \hat{\mu}_\alpha\left(t^{\frac{1}{\alpha}} \xi\right)\tau_\alpha(\xi) d\xi,\\
&= \frac{1}{(2\pi)^dit} \int_{\bbr^d} \mathcal{F}(f)(\xi) e^{i \langle x; \xi \rangle}\nabla\left(\hat{\mu}_\alpha(t^{\frac{1}{\alpha}}\xi)\right) d\xi.
\end{align*}
Now, since $\alpha \in (1,2)$, $\int_{\bbr^d} |x_j| \mu_\alpha(dx)<+\infty$, for $j \in\{1, \dots, d\}$, 
and so, for all $\xi \in \bbr^d$ and all $t>0$,
\begin{align*}
\nabla\left(\hat{\mu}_\alpha(t^{\frac{1}{\alpha}}\xi)\right)& = \nabla \left( \bbe e^{i\langle \xi t^{\frac{1}{\alpha}}; X_\alpha \rangle} \right)= i t^{\frac{1}{\alpha}} \bbe \left(X_\alpha e^{i\langle \xi t^{\frac{1}{\alpha}}; X_\alpha \rangle} \right),
\end{align*}
with $X_\alpha \sim \mu_\alpha$.  
Once more, by Fourier inversion, for all $f\in \mathcal{S}(\bbr^d)$, all $x\in \bbr^d$ and all $t>0$,
\begin{align}\label{eq:repD}
D^{\alpha-1}(P^\alpha_t(f))(x) &= \frac{1}{(2\pi)^d t^{1- \frac{1}{\alpha}}}\int_{\bbr^d} \mathcal{F}(f)(\xi) e^{i \langle x; \xi \rangle}\bbe \left(X_\alpha e^{i\langle \xi t^{\frac{1}{\alpha}}; X_\alpha \rangle} \right) d\xi,\nonumber\\
&= \frac{1}{t^{1-\frac{1}{\alpha}}} \bbe \left( X_\alpha f\left(x+t^{\frac{1}{\alpha}}X_\alpha\right)\right).
\end{align}
Then, observe that the symbol of the operator $(D^{\alpha-1})^*$ is given, for all $\xi \in \bbr^d$, by
\begin{align*}
\tau^*_\alpha(\xi) = \int_{\bbr^d} u \left(e^{-i \langle u;\xi\rangle}-1\right) \nu_\alpha(du) = \overline{\tau_\alpha(\xi)
}.
\end{align*}
Thus, for all $f\in \mathcal{S}(\bbr^d)$, all $x\in \bbr^d$ and all $t>0$,
\begin{align*}
(D^{\alpha-1})^*(P^\alpha_t(f))(x) &= \frac{1}{(2\pi)^d} \int_{\bbr^d} \mathcal{F}(f)(\xi) e^{i \langle x; \xi \rangle} \hat{\mu}_\alpha\left(t^{\frac{1}{\alpha}} \xi\right)\overline{\tau_\alpha(\xi)} d\xi.  
\end{align*}
But, since $\sigma$ is symmetric, the function $\hat{\mu}_\alpha$ is real-valued, and so 
\begin{align*}
\nabla\left(\hat{\mu}_\alpha (t^{\frac{1}{\alpha}} \xi) \right) = \overline{\nabla\left(\hat{\mu}_\alpha (t^{\frac{1}{\alpha}} \xi) \right)}=\overline{i t \tau_\alpha(\xi)\hat{\mu}_\alpha(t^{\frac{1}{\alpha}} \xi)} = -it \overline{\tau_\alpha(\xi)} \hat{\mu}_\alpha(t^{\frac{1}{\alpha}} \xi),
\end{align*}
for all $\xi \in \bbr^d$ and all $t>0$.  
Then, for all $f\in \mathcal{S}(\bbr^d)$, all $x\in \bbr^d$ and all $t>0$,
\begin{align}\label{eq:}
(D^{\alpha-1})^*(P^\alpha_t(f))(x) &= \frac{1}{(2\pi)^d} \int_{\bbr^d} \mathcal{F}(f)(\xi) e^{i \langle x; \xi \rangle} \hat{\mu}_\alpha\left(t^{\frac{1}{\alpha}} \xi\right)\overline{\tau_\alpha(\xi)} d\xi,\nonumber\\
& = \frac{1}{-(2\pi)^d it} \int_{\bbr^d} \mathcal{F}(f)(\xi) e^{i \langle x; \xi \rangle} \nabla\left(\hat{\mu}_\alpha (t^{\frac{1}{\alpha}} \xi) \right)  d\xi,\nonumber\\
&= - \frac{1}{t^{1-\frac{1}{\alpha}}} \bbe \left(X_\alpha f\left(x+t^{\frac{1}{\alpha}}X_\alpha\right)\right).
\end{align}
The representation formula for $\mathbf{D}^{\alpha-1}(P^\alpha_t(f))$ then follows from \eqref{eq:FracGradWB}. 
A completely analogous reasoning allows to get the representation \eqref{eq:bismut_stable_OU}. 
This concludes Step $1$ of the proof.\\
\\
\textit{Step 2}: For $\nu_{\alpha}^{\rm {rot}}$ and $\mu_\alpha^{\rm rot}$ as respectively defined by \eqref{eq:Levy_Rot} and by \eqref{eq:charac_rot} and 
all $\xi \in \bbr^d$ such that $\xi\ne0$,
\begin{align*}
\tau_\alpha^{\rm rot}(\xi) &= \int_{\bbr^d} u \left(e^{i \langle u;\xi\rangle}-1\right) \nu_\alpha^{\rm rot}(du) =c_{\alpha,d} \int_{(0,+\infty)\times\mathbb{S}^{d-1}} ry \left(e^{ir\langle y;\xi \rangle}-1\right) \frac{dr}{r^{1+\alpha}} \lambda(dy),\\
&=c_{\alpha,d} \frac{\Gamma(2-\alpha)}{1-\alpha} \int_{\mathbb{S}^{d-1}} y |\langle\xi;y\rangle|^{\alpha-1}\left(i \operatorname{sign}\left(\langle\xi;y\rangle\right)\cos \left(\frac{\alpha \pi}{2}\right) + \sin \left(\frac{\alpha \pi}{2}\right)\right) \lambda(dy),\\
&= i c_{\alpha,d} \frac{\Gamma(2-\alpha)}{1-\alpha}\cos \left(\frac{\alpha \pi}{2}\right) \int_{\mathbb{S}^{d-1}} y |\langle\xi;y\rangle|^{\alpha-1} \operatorname{sign}\left(\langle\xi;y\rangle\right) \lambda(dy),\\
&= i \frac{\alpha \Gamma\left(\frac{\alpha+d}{2}\right)}{4 \Gamma\left(\frac{\alpha+1}{2}\right) \pi^{\frac{d-1}{2}}}\int_{\mathbb{S}^{d-1}} y |\langle\xi;y\rangle|^{\alpha-1} \operatorname{sign}\left(\langle\xi;y\rangle\right) \lambda(dy).
\end{align*}
Then, for all $\xi \in \bbr^d$ with $\xi \ne 0$,
\begin{align*}
\int_{\mathbb{S}^{d-1}} y |\langle\xi;y\rangle|^{\alpha-1} \operatorname{sign}\left(\langle\xi;y\rangle\right) \lambda(dy) \underset{\alpha \rightarrow 2}{\longrightarrow}  \int_{\mathbb{S}^{d-1}} y \langle\xi;y\rangle \lambda(dy) = \dfrac{ \pi^{\frac{d}{2}}}{\Gamma\left(\frac{d+2}{2}\right)} \xi, 
\end{align*}
and thus,
\begin{align*}
\tau_\alpha^{\rm rot}(\xi) \underset{\alpha \rightarrow 2}{ \longrightarrow }i\xi \frac{\sqrt{\pi}}{2\Gamma\left(\frac{3}{2}\right)} = i\xi.
\end{align*}
Now, letting $\alpha \rightarrow 2$ on both sides of the equality \eqref{eq:repD} with the corresponding 
$\mu_\alpha^{\rm rot}$, and  
for all $f\in \mathcal{S}(\bbr^d)$, all $x\in \bbr^d$ and all $t>0$, 
\begin{align*}
\nabla(P^H_t(f))(x)  =  \frac{1}{\sqrt{t}} \bbe \left( X f\left(x+\sqrt{t} X\right)\right),
\end{align*}
where $X\sim \gamma$, i.e., the law of $X$ is $\gamma$. A similar reasoning with the semigroup $\left(P^{\nu_\alpha^{\rm rot}}_t\right)_{t\geq 0}$ allows to retrieve the classical representation \eqref{eq:bismut_OU}. This concludes the proof of the proposition. 

\end{proof}
\noindent
The generators of the above four semigroups can be obtained through their Fourier representations, and it is straightforward to check that, for $\alpha \in (1,2)$, they are respectively given by
\begin{align}\label{eq:heatgen}
\mathcal{A}_H(f)(x) =\frac{1}{2} \Delta(f)(x),
\end{align}
\begin{align}\label{eq:OUgen}
\mathcal{L}^\gamma(f)(x)=-\langle x;\nabla(f)(x)\rangle + \Delta(f)(x),
\end{align}
\begin{align}\label{eq:Stheatgen}
\mathcal{A}_\alpha(f)(x) = \int_{\bbr^d} \left(f(x+u)-f(x)-\langle u; \nabla(f)(x) \rangle\right) \nu_\alpha(du),
\end{align}
\begin{align}\label{eq:StOUgen}
\mathcal{L}^\alpha(f)(x)= -\langle x;\nabla(f)(x) \rangle + \int_{\bbr^d} \langle \nabla(f)(x+u)-\nabla(f)(x);u \rangle \nu_\alpha(du), 
\end{align}
for all $f\in \mathcal{S}(\bbr^d)$ and all $x\in \bbr^d$.

The generator of the ``carr\'e de Mehler" semigroup is denoted by $\mathcal{L}$ and studied 
in Theorem~\ref{def:squared_Mehler_L2} and Proposition~\ref{prop:squared_Mehler_Lp} 
of the Appendix.

From the representations obtained in Proposition~\ref{prop:Bismut}, it is in turn possible to obtain representations 
for the operators of interest.~To do so, let us first recall the definition of the gamma transform.  
For $(P_t)_{t\geq 0}$ a $C_0$-semigroup of contractions, on a Banach space, having generator $\mathcal{A}$, 
its gamma transform of order $r>0$ is defined, for all suitable $f$, by
\begin{align}\label{eq:Gam_Tr}
\left(E-\mathcal{A} \right)^{-\frac{r}{2}} f = \frac{1}{\Gamma(\frac{r}{2})} \int_0^{+\infty} \dfrac{e^{-t}}{t^{1-\frac{r}{2}}} P_t(f) dt,
\end{align}
where $E$ is the identity operator and where the above integral is understood in the Bochner sense. 

With the above, we now have:

\begin{prop}\label{prop:Riesz_Representation}
Let $\alpha \in (1,2)$, let $r>2(\alpha-1)/\alpha$ (or let $r>1$ in the Gaussian case),~let $\nu_\alpha$ be a non-degenerate symmetric L\'evy measure on $\bbr^d$ satisfying \eqref{eq:scale} and let $\mu_\alpha$ be the associated $\alpha$-stable probability measure on $\bbr^d$ given by \eqref{def:stable}. Then, for all $f\in \mathcal{S}(\bbr^d)$ and all $x\in \bbr^d$,
\begin{align}\label{eq:Riesz_Heat}
\nabla \circ \left(E-\mathcal{A}_H\right)^{-\frac{r}{2}}(f)(x) & =\frac{1}{2}\int_{\bbr^d} y \left( \int_{0}^{+\infty}\dfrac{e^{-t}}{\Gamma(\frac{r}{2})t^{\frac{3}{2}-\frac{r}{2}}} \left(f(x+\sqrt{t} y)- f(x-\sqrt{t} y)\right)  dt \right) \gamma(dy),
\end{align}
\begin{align}\label{eq:Riesz_OU}
\nabla \circ \left(E-\mathcal{L}^\gamma\right)^{-\frac{r}{2}} (f) (x)& = \frac{1}{2} \int_{\bbr^d} y \bigg(\int_0^{+\infty}\dfrac{e^{-2t}}{\Gamma(\frac{r}{2})t^{1-\frac{r}{2}}\sqrt{1-e^{-2t}}}\bigg( f(xe^{-t}+\sqrt{1-e^{-2t}}y)\nonumber\\
&\qquad\qquad\qquad-f(xe^{-t}-\sqrt{1-e^{-2t}}y)\bigg) dt \bigg) \gamma(dy),
\end{align}
\begin{align}\label{eq:Riesz_StableH}
\mathbf{D}^{\alpha-1} \circ \left(E-\mathcal{A}_\alpha\right)^{-\frac{r}{2}}(f)(x) & = \frac{1}{2} \int_{\bbr^d} y \left( \int_{0}^{+\infty} \dfrac{e^{-t}}{\Gamma(\frac{r}{2}) t^{2-\frac{1}{\alpha}-\frac{r}{2}}} \bigg(f(x+t^{\frac{1}{\alpha}} y)- f(x-t^{\frac{1}{\alpha}} y)\bigg) dt\right) \mu_\alpha(dy),
\end{align}
\begin{align}\label{eq:Riesz_OU_Stable}
\mathbf {D}^{\alpha-1} \circ \left(E-\mathcal{L}^\alpha\right)^{-\frac{r}{2}}(f)(x) & =\frac{1}{2} \int_{\bbr^d} y \bigg(\int_{0}^{+\infty} \dfrac{e^{-\alpha t}}{\Gamma(\frac{r}{2})t^{1-\frac{r}{2}}\left(1-e^{-\alpha t}\right)^{1-\frac{1}{\alpha}}}\bigg(f\left(x e^{-t}+(1-e^{-\alpha t})^{\frac{1}{\alpha}} y\right)\nonumber\\
&\qquad\qquad\qquad- f\left(x e^{-t}-(1-e^{-\alpha t})^{\frac{1}{\alpha}} y\right)\bigg) dt\bigg) \mu_\alpha(dy).
\end{align}
\end{prop}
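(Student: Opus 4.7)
The plan is to combine the definition of the gamma transform \eqref{eq:Gam_Tr} with the four Bismut-type formulae from Proposition~\ref{prop:Bismut}, and then to symmetrize using the fact that both $\gamma$ and $\mu_\alpha$ are invariant under $y \mapsto -y$. All four identities are proved by the same three steps, so I would give the details for \eqref{eq:Riesz_StableH} and indicate that the others follow mutatis mutandis.

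First, for $f \in \mathcal{S}(\bbr^d)$, I would write
\begin{align*}
\left(E-\mathcal{A}_\alpha\right)^{-\frac{r}{2}}(f)(x) = \frac{1}{\Gamma(r/2)} \int_0^{+\infty} \frac{e^{-t}}{t^{1-\frac{r}{2}}} P^\alpha_t(f)(x)\, dt
\end{align*}
as a Bochner integral and then commute $\mathbf{D}^{\alpha-1}$ with this integral. Applying \eqref{eq:bismut_stable_heat} replaces $\mathbf{D}^{\alpha-1}(P^\alpha_t(f))(x)$ by $t^{1/\alpha - 1}\int_{\bbr^d} y f(x + t^{1/\alpha}y)\,\mu_\alpha(dy)$. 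Because $\nu_\alpha$ has symmetric spherical part, $\mu_\alpha$ is invariant under $y \mapsto -y$, so
\begin{align*}
\int_{\bbr^d} y f(x+t^{1/\alpha}y)\,\mu_\alpha(dy) = \frac{1}{2}\int_{\bbr^d} y\left(f(x+t^{1/\alpha}y) - f(x-t^{1/\alpha}y)\right)\mu_\alpha(dy),
\end{align*}
and substituting this into the $t$-integral gives \eqref{eq:Riesz_StableH}. The same three moves (gamma transform, Bismut, symmetrization of $\gamma$ or $\mu_\alpha$) yield \eqref{eq:Riesz_Heat}, \eqref{eq:Riesz_OU} and \eqref{eq:Riesz_OU_Stable}; in \eqref{eq:Riesz_OU_Stable} the extra factor $e^{-(\alpha-1)t}(1-e^{-\alpha t})^{1/\alpha - 1}$ carried through the Bismut identity \eqref{eq:bismut_stable_OU} combines with $e^{-t}t^{r/2-1}$ to produce the stated weight.

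The main obstacle, and the only step that is not purely formal, is justifying the interchange of $\mathbf{D}^{\alpha-1}$ (respectively $\nabla$) with the Bochner integral; this is where the assumption $r > 2(\alpha-1)/\alpha$ (respectively $r>1$) enters. Indeed, the non-symmetrized Bismut representation has integrand of size $t^{r/2 + 1/\alpha - 2}$ near $t=0$ (and $t^{r/2 - 3/2}$ in the Gaussian case), which is integrable precisely under the stated threshold. I would handle the commutation by noting that for $f \in \mathcal{S}(\bbr^d)$, $P^\alpha_t(f) \in \mathcal{S}(\bbr^d)$ with uniform-in-$t$ Schwartz estimates on compact $t$-intervals, so $\mathbf{D}^{\alpha-1} P^\alpha_t(f)$ is jointly continuous in $(t,x)$ and bounded by the Bismut expression; absolute convergence on $(0,+\infty)$ under the threshold condition then permits Fubini (equivalently, this is closedness of $\mathbf{D}^{\alpha-1}$ applied to the vector-valued integral). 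Once this step is in place, the remaining manipulations are elementary.
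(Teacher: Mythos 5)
Your proposal is correct and follows essentially the same route as the paper: apply the gamma transform \eqref{eq:Gam_Tr}, insert the Bismut formula from Proposition~\ref{prop:Bismut}, symmetrize using $X_\alpha =_{\mathcal{L}} -X_\alpha$, and conclude by Fubini, with the condition $r>2(\alpha-1)/\alpha$ (resp.\ $r>1$) guaranteeing absolute convergence of the $t$-integral near the origin. Your added care in justifying the interchange of $\mathbf{D}^{\alpha-1}$ with the Bochner integral is a point the paper passes over silently, but it changes nothing of substance.
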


\begin{proof}
Let $\alpha \in (1,2)$, let $r>2(\alpha-1)/\alpha$ and let $\nu_\alpha$ be as in the hypotheses.   
From \eqref{eq:repD} and \eqref{eq:Gam_Tr} it follows that 
for all $f \in \mathcal{S}(\bbr^d)$ and all $x \in \bbr^d$,
\begin{align*}
D^{\alpha-1} \circ \left(E-\mathcal{A}_\alpha\right)^{-\frac{r}{2}}(f)(x)&= \frac{1}{\Gamma(\frac{r}{2})} \int_{0}^{+\infty} \frac{e^{-t}}{t^{1-\frac{r}{2}}} D^{\alpha-1} \left(P^\alpha_t(f)\right)(x) dt,\\
&= \frac{1}{\Gamma(\frac{r}{2})} \int_{0}^{+\infty} \frac{e^{-t}}{t^{2-\frac{1}{\alpha}-\frac{r}{2}}} \bbe \left( X_\alpha f\left(x+t^{\frac{1}{\alpha}}X_\alpha\right)\right)dt.
\end{align*}
Since $\sigma$ is symmetric, $X_\alpha =_{\mathcal{L}} -X_\alpha$, where $=_\mathcal{L}$ indicates equality in law, (see, e.g., \cite[Theorem $14.13$]{S}). 
Thus, for all $t\ge 0$,
\begin{align*}
\bbe \left( X_\alpha f\left(x+t^{\frac{1}{\alpha}}X_\alpha\right)\right) = \bbe \left( -X_\alpha f\left(x-t^{\frac{1}{\alpha}}X_\alpha\right)\right),
\end{align*}
and, for all $f \in \mathcal{S}(\bbr^d)$ and all $x\in \bbr^d$,
\begin{align*}
D^{\alpha-1} \circ \left(E-\mathcal{A}_\alpha\right)^{-\frac{r}{2}}(f)(x)&= \frac{1}{2\Gamma(\frac{r}{2})} \int_{0}^{+\infty} \frac{e^{-t}}{t^{2-\frac{1}{\alpha}-\frac{r}{2}}} \left(\bbe \bigg( X_\alpha f\left(x+t^{\frac{1}{\alpha}}X_\alpha\right)\right) \\
& \quad\quad- \bbe \left(X_\alpha f\left(x-t^{\frac{1}{\alpha}}X_\alpha\right)\right)\bigg) dt.
\end{align*}
Next, a Fubini's argument ensures that, for all $f \in \mathcal{S}(\bbr^d)$ and all $x\in \bbr^d$,
\begin{align*}
D^{\alpha-1} \circ \left(E-\mathcal{A}_\alpha\right)^{-\frac{r}{2}}(f)(x)&= \frac{1}{2\Gamma(\frac{r}{2})} \int_{\bbr^d} y \left(\int_0^{+\infty} \frac{e^{-t}}{t^{2-\frac{r}{2}-\frac{1}{\alpha}}} \left(f\left(x+t^{\frac{1}{\alpha}}y\right)- f\left(x-t^{\frac{1}{\alpha}}y\right)\right) dt\right) \mu_\alpha(dy).
\end{align*}
A similar reasoning provides an analogous representation for the action of 
$(D^{\alpha-1})^* \circ \left(E-\mathcal{A}_\alpha\right)^{-\frac{r}{2}}$ on $\mathcal{S}(\bbr^d)$ leading to 
\eqref{eq:Riesz_StableH}.  
The identities \eqref{eq:Riesz_Heat}, \eqref{eq:Riesz_OU} and \eqref{eq:Riesz_OU_Stable} 
can be obtained in a completely similar manner. 
\end{proof}
\noindent
In the next two sections, the $L^p$-continuity properties of operators associated with the four semigroups  respectively defined in \eqref{eq:heatSM}--\eqref{eq:StOUSM} are investigated.  While the results on 
$(P^H_t)_{t\geq 0}$ and $(P_t^\gamma)_{t\geq 0}$ are very classical, the ones on 
$(P^\alpha_t)_{t\geq 0}$ and $(P_t^{\nu_\alpha})_{t\geq 0}$ seem to be new.~(For related results 
regarding the fractional Laplacians on $\bbr^d$ see \cite{JMP18}, for results regarding characterizations of the Bessel fractional spaces see \cite{Strichartz67}, and for Fourier multiplier theorems in connection with L\'evy processes see \cite{BB07,BBB11,BanOse12,BBL16,BK19}.)~Note, nevertheless, that the method of proofs which is based on 
Bismut-type formulae is reminiscent of the one developed by Pisier in \cite{P88}.

\section{The Classical Cases}\label{sec:CC}
\noindent
Starting with the rotationally invariant case, this section studies continuity properties of the operators 
$\mathbf{D}^{\alpha-1,\operatorname{rot}} \circ \left(\lambda_0 E-\mathcal{A}^{\operatorname{rot}}_\alpha\right)^{-\frac{r}{2}}$, for $\alpha \in (1,2)$, $r>0$ and $\lambda_0 > 0$. In particular, an appropriate choice of $r>0$ provides an operator which is similar to a Riesz transform in this fractional setting. Indeed, thanks to Lemma \ref{lem:Hor_Mihlin}, $r = \frac{2}{\alpha}(\alpha-1)$ ensures that the associated operator has good homogeneity properties (when $\lambda_0$ tends to $0$) which is the natural requirement for a definition of a Riesz transform.

The initial result uses the 
$L^p(\bbr^d,dx)$-boundedness of the classical homogeneous Riesz transform 
(see, e.g., \cite{D01, G08}) to obtain dimension-dependent estimates.   
For this purpose, recall the spherical representation of this classical transform, e.g.,  see \cite[Proof of Theorem $1$]{DRF85}:  For all $f\in \mathcal{S}(\bbr^d)$, all $x\in \bbr^d$ and all $j\in \{1, \dots, d\}$,
\begin{align}\label{eq:sphe_int_rep}
\mathcal{R}^{2,h}_j(f)(x) = \frac{\Gamma\left(\frac{d+1}{2}\right)}{2\pi^{\frac{d-1}{2}}} \int_{\mathbb{S}^{d-1}} y_j H_{y}(f)(x) \sigma_L(dy),
\end{align}
where $H_y$ is a directional Hilbert transform characterized in the Fourier domain by the directional bounded multiplier $m_y$ defined, for all $\xi \in \bbr^d$ and all $y\in \mathbb{S}^{d-1}$, by
\begin{align}\label{eq:sphe_mult}
m_y(\xi) = -i  \rm{sign}\left(\langle y ; \xi \rangle\right),
\end{align}
and where $\sigma_L$ is the spherical part of the $d$-dimensional Lebesgue measure.~In turn, the classical homogeneous Riesz transform admits the following Fourier representation (see, e.g., \cite[Proposition 4.1.14.]{G08}); valid for all $f\in \mathcal{S}(\bbr^d)$, all $\xi \in \bbr^d$ with $\xi \ne 0$, and 
all $j \in \{1, \dots, d\}$, 
\begin{align}\label{eq:Fourier_Riesz}
\mathcal{F}\left(\mathcal{R}^{2,h}_j(f)\right)(\xi) = -\frac{i\xi_j}{\|\xi\|} \mathcal{F}(f)(\xi).
\end{align}

\begin{lem}\label{lem:Hor_Mihlin}
Let $\alpha \in (1,2)$, let $r>0$ and let $\nu_{\alpha}^{\rm rot}$ be the L\'evy measure given by 
\eqref{eq:Levy_Rot}. Then, for all $f\in \mathcal{S}(\bbr^d)$, all $x\in \bbr^d$, and all $j \in \{1, \dots, d\}$, 
\begin{align}\label{eq:rep_sphe}
\mathbf{D}^{\alpha-1,\operatorname{rot}}_j \circ \left(E-\mathcal{A}_\alpha^{\operatorname{rot}}\right)^{-\frac{r}{2}}(f)(x) = \mathcal{R}^{2,h}_j\left(S_{\alpha,r}(f)\right)(x) = S_{\alpha,r}(\mathcal{R}^{2,h}_j(f))(x), 
\end{align}
where $S_{\alpha,r}$ is a linear operator defined (in the Fourier domain) by  
\begin{align}\label{eq:def_S}
\mathcal{F}\left(S_{\alpha,r}(f)\right)(\xi) = \frac{-\alpha 2^{\frac{r}{2}}\|\xi\|^{\alpha-1}}{2\left(2+ \|\xi\|^\alpha\right)^{\frac{r}{2}}} \mathcal{F}(f)(\xi).   
\end{align}
Moreover, for all $r \in [r_{\min}(\alpha),+\infty)$, all $p\in (1,+\infty)$, all $f \in L^p(\bbr^d,dx)$ and all $j \in \{1, \dots, d\}$, 
\begin{align}\label{eq:Lp_cont}
\|\mathbf{D}^{\alpha-1,\operatorname{rot}}_j \circ \left(E-\mathcal{A}_\alpha^{\operatorname{rot}}\right)^{-\frac{r}{2}}(f)\|_{L^p(\bbr^d, dx)} \leq C_{\alpha,d,p,r} \|f\|_{L^p(\bbr^d, dx)},
\end{align}
for some $C_{\alpha,d,p,r}>0$ depending on $\alpha$, on $d$, on $p$ and on $r$ and with $r_{\min}(\alpha)$ defined by 
\begin{align}\label{eq:r_min}
r_{\min}(\alpha):= \frac{2}{\alpha}(\alpha-1). 
\end{align}
\end{lem}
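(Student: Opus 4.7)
The plan is a direct Fourier-multiplier computation followed by a Mikhlin-type check.

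For the factorization \eqref{eq:rep_sphe}, I would work entirely at the level of symbols. Since $\hat{\mu}^{\operatorname{rot}}_\alpha(\xi)=\exp(-\|\xi\|^\alpha/2)$ by \eqref{eq:charac_rot}, the generator $\mathcal{A}^{\operatorname{rot}}_\alpha$ has Fourier symbol $-\|\xi\|^\alpha/2$, so the gamma transform $(E-\mathcal{A}^{\operatorname{rot}}_\alpha)^{-r/2}$ acts on Schwartz functions as multiplication by $2^{r/2}(2+\|\xi\|^\alpha)^{-r/2}$. The explicit computation of $\tau_\alpha^{\operatorname{rot}}$ performed in Step 2 of the proof of Proposition \ref{prop:Bismut}, combined with the normalization \eqref{eq:renorm}, yields that $\mathbf{D}^{\alpha-1,\operatorname{rot}}_j$ has symbol $\tfrac{i\alpha}{2}\,\xi_j\|\xi\|^{\alpha-2}$. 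Writing $\xi_j\|\xi\|^{\alpha-2}=(\xi_j/\|\xi\|)\|\xi\|^{\alpha-1}$ and pulling out the classical Riesz symbol $-i\xi_j/\|\xi\|$ from \eqref{eq:Fourier_Riesz} produces precisely the symbol of $S_{\alpha,r}$ defined by \eqref{eq:def_S}. Both orders of composition in \eqref{eq:rep_sphe} agree since all operators involved are Fourier multipliers.

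Since $\mathcal{R}^{2,h}_j$ is $L^p(\bbr^d,dx)$-bounded for $p\in(1,+\infty)$, the estimate \eqref{eq:Lp_cont} reduces to $L^p$-boundedness of $S_{\alpha,r}$. I would invoke the Mikhlin multiplier theorem. Its symbol is radial, $m(\xi)=\phi(\|\xi\|)$ with
\[ \phi(s)=\frac{-\alpha 2^{r/2} s^{\alpha-1}}{2(2+s^\alpha)^{r/2}}, \qquad s>0, \]
so the Mikhlin hypothesis reduces to $|s^k\phi^{(k)}(s)|\le C_k$ on $(0,+\infty)$ for all integers $0\le k\le \lfloor d/2\rfloor+1$. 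As $s\to 0^+$, $\phi$ is smooth with leading term $Cs^{\alpha-1}$, and $s^k\phi^{(k)}(s)=O(s^{\alpha-1})$ remains bounded because $\alpha>1$. As $s\to +\infty$, $\phi(s)\sim C s^{\alpha-1-\alpha r/2}$, and the hypothesis $r\ge r_{\min}(\alpha)=2(\alpha-1)/\alpha$ forces the exponent to be non-positive; the analogous cancellation persists under differentiation, so $s^k\phi^{(k)}(s)$ is bounded at infinity as well.

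The only mildly technical step is the uniform scaled-derivative estimate for $\phi$ on all of $(0,+\infty)$, since one must carefully track the interplay between the numerator $s^{\alpha-1}$ and the denominator $(2+s^\alpha)^{r/2}$ under repeated differentiation; this can be done by induction, separately treating the regimes $s\le 1$ and $s\ge 1$. This is precisely where the threshold $r_{\min}(\alpha)$ matters: for $r<r_{\min}(\alpha)$ the symbol grows at infinity and Mikhlin fails. Everything else reduces to standard Fourier bookkeeping together with the classical $L^p$-boundedness of the homogeneous Riesz transform.
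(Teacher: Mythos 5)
Your proposal is correct and follows essentially the same route as the paper: the factorization \eqref{eq:rep_sphe} is obtained by the identical Fourier-symbol computation, and \eqref{eq:Lp_cont} is reduced to the $L^p$-boundedness of the radial multiplier $S_{\alpha,r}$ via the Mikhlin--H\"ormander theorem, with the threshold $r_{\min}(\alpha)$ entering exactly as you describe. The paper merely carries out the scaled-derivative estimates you defer to ``induction'' in full detail, by writing the symbol as $\psi_{\alpha,r}(\|\xi\|^2)$ and using the recursive formulae of Lemmas \ref{lem:simple} and \ref{lem:simple2} to verify $|D^\beta_\xi m_{\alpha,r}(\xi)|\leq C\|\xi\|^{-|\beta|}$ for $|\beta|\leq\lfloor d/2\rfloor+1$.
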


\begin{proof}
\textit{Step 1}: Let us start with the proof of \eqref{eq:rep_sphe}. Thanks to the normalization chosen for $\mu_\alpha^{\operatorname{rot}}$, it follows that the operator $ \left(E-\mathcal{A}_\alpha^{\operatorname{rot}}\right)^{-\frac{r}{2}}$ is defined on the Fourier side, for all $f \in \mathcal{S}(\bbr^d)$ and all $\xi \in \bbr^d$, by
\begin{align*}
\mathcal{F}\left(\left(E-\mathcal{A}_\alpha^{\operatorname{rot}}\right)^{-\frac{r}{2}}(f)\right)(\xi) = \dfrac{2^{\frac{r}{2}}\mathcal{F}(f)(\xi)}{\left(2+\|\xi\|^{\alpha}\right)^{\frac{r}{2}}}. 
\end{align*}
Moreover, the symbol of the operator $\mathbf{D}^{\alpha-1,\operatorname{rot}}$ denoted by $\tau_\alpha^{\operatorname{rot}}$ is given, for all $\xi \in \bbr^d$, $\xi \ne 0$, by 
\begin{align*}
\tau_\alpha^{\operatorname{rot}}(\xi) = \int_{\bbr^d} u \left(e^{i \langle u ; \xi \rangle}-1\right) \nu_{\alpha}^{\operatorname{rot}}(du) = \frac{i \alpha}{2} \frac{\xi}{\|\xi\|} \|\xi\|^{\alpha-1}. 
\end{align*}
Then, by Fourier inversion formula, it follows easily that, for all $f \in \mathcal{S}(\bbr^d)$ and all $x\in \bbr^d$, 
\begin{align*}
\mathbf{D}^{\alpha-1,\operatorname{rot}}_j \circ \left(E-\mathcal{A}_\alpha^{\operatorname{rot}}\right)^{-\frac{r}{2}}(f)(x) &= \frac{1}{(2\pi)^d} \int_{\bbr^d} \mathcal{F}(f)(\xi) e^{i \langle x; \xi \rangle}  \frac{i \alpha}{2} \frac{\xi}{\|\xi\|}   \dfrac{2^{\frac{r}{2}}\|\xi\|^{\alpha-1}}{\left(2+\|\xi\|^{\alpha}\right)^{\frac{r}{2}}} d\xi. 
\end{align*}
Next, let $S_{\alpha,r}$ be the linear operator defined, for all $f\in \mathcal{S}(\bbr^d)$ and all $\xi \in \bbr^d$, by
\begin{align*}
\mathcal{F}(S_{\alpha,r}(f))(\xi) = \frac{-\alpha 2^{\frac{r}{2}}\|\xi\|^{\alpha-1}}{2\left(2+ \|\xi\|^\alpha\right)^{\frac{r}{2}}} \mathcal{F}(f)(\xi).
\end{align*}
Clearly, from \eqref{eq:Fourier_Riesz}, for all $f \in \mathcal{S}(\bbr^d)$ and all $x \in \bbr^d$,
\begin{align*}
\mathbf{D}^{\alpha-1,\operatorname{rot}}_j \circ \left(E-\mathcal{A}^{\operatorname{rot}}_\alpha\right)^{-\frac{r}{2}}(f)(x) = \mathcal{R}^{2,h}_j(S_{\alpha,r}(f))(x). 
\end{align*}
\textit{Step 2}: To conclude let us prove that, for all $p \in (1,+\infty)$, 
$S_{\alpha,r}$ is a bounded linear operator from $L^p(\bbr^d,dx)$ to itself as soon as $r \in [r_{\min}(\alpha),+\infty)$ with $r_{\min}(\alpha) = 2(\alpha-1)/\alpha$. Let $m_{\alpha,r}$ be defined, for all $\xi \in \bbr^d$, by
\begin{align*}
m_{\alpha,r}(\xi) := \frac{-\alpha 2^{\frac{r}{2}}\|\xi\|^{\alpha-1}}{2\left(2+ \|\xi\|^\alpha\right)^{\frac{r}{2}}}.
\end{align*}
Note that $m_{\alpha,r}$ is bounded on $\bbr^d$ as soon as $r \geq 2(\alpha-1)/\alpha$. Now, let $\psi_{\alpha,r}$ be the function defined, for all $s\in (0,+\infty)$, by
\begin{align*}
\psi_{\alpha,r}(s):=  \frac{-\alpha 2^{\frac{r}{2}}s^{\frac{\alpha-1}{2}}}{2\left(2+ s^{\frac{\alpha}{2}}\right)^{\frac{r}{2}}}, 
\end{align*}
so that $\psi_{\alpha,r}\left(\|\xi\|^2\right) = m_{\alpha,r}(\xi)$, for $\xi \in \bbr^d$. Let $\beta$ be a multi-index such that $|\beta| \leq \lfloor \frac{d}{2} \rfloor+1$, where $|\beta| = \sum_{k=1}^d \beta_k$ and where $\lfloor x \rfloor$ denotes the integer part of the real number $x$.

First, observe that, for all $s>0$ and all $k \in \{1,\dots, \lfloor \frac{d}{2} \rfloor+1\}$,
\begin{align}\label{eq:derivatives}
\psi^{(k)}_{\alpha,r}(s) = \dfrac{s^{\frac{\alpha-1}{2}-k}P_{\alpha,k,r}(s^{\frac{\alpha}{2}})}{\left(2+s^\frac{\alpha}{2}\right)^{\frac{r}{2}+k}},
\end{align}
where $P_{\alpha,k,r}$ is a polynomial of degree $k$ with coefficients depending on $\alpha$, on $k$ and on $r$ (see Lemma \ref{lem:simple} of the Appendix). 
Let us then start the bounding procedures with the extreme cases.   To do so, 
let $j \in \{1, \dots, d\}$ and assume that $\beta=(0, \dots, 0, |\beta|,0,\dots,0)$ where the non-zero coordinate 
is at the $j$-th position.  
Note that $\partial_{\xi_j}\left(\|\xi\|^2\right) = 2 \xi_j$, $\partial^2_{\xi_j}\left(\|\xi\|^2\right) = 2$ and $\partial^k_{\xi_j}\left(\|\xi\|^2\right)=0$, for all $k \geq 3$ (and also $\partial^2_{\xi_j,\xi_m}\left(\|\xi\|^2\right) =0$, for all $j,m \in \{1, \dots, d\}$ with $j \ne m$). Then, by a recursive argument (see Lemma \ref{lem:simple2} of the Appendix), for all $\xi \in \bbr^d$, $\xi \ne 0$,
\begin{align*}
D^{\beta}_{\xi}\left(\psi_{\alpha,r}\left(\|\xi\|^2\right)\right) =  \partial_{\xi_j}^{|\beta|}\left(\psi_{\alpha,r}\left(\|\xi\|^2\right)\right) = \sum_{p=0}^{\lfloor\frac{|\beta|}{2}\rfloor} C_{p}\left(|\beta|\right) (\xi_j)^{|\beta|-2p} \psi_{\alpha,r}^{(|\beta|-p)}\left(\|\xi\|^2\right),
\end{align*}
for some constants $C_{p}\left(|\beta|\right)>0$ only depending on $p$ and on $|\beta|$. Thus, for all $\xi \in \bbr^d$, 
\begin{align}\label{ineq:bound_1_Hor_Mihlin}
\left| D^{\beta}_{\xi}\left(\psi_{\alpha,r}\left(\|\xi\|^2\right)\right) \right| &\leq \sum_{p=0}^{\lfloor\frac{|\beta|}{2}\rfloor} C_p\left(|\beta|\right) \|\xi\|^{|\beta|-2p} \left| \psi_{\alpha,r}^{(|\beta|-p)}\left(\|\xi\|^2\right)\right|.
\end{align}
Using \eqref{eq:derivatives}, for all $\xi \in \bbr^d$ with $\xi \ne 0$, and all $p \in \left\{0, \dots, \lfloor\frac{|\beta|}{2}\rfloor\right\}$, 
\begin{align*}
\|\xi\|^{|\beta|-2p} \left| \psi_{\alpha,r}^{(|\beta|-p)}\left(\|\xi\|^2\right)\right| & \leq \|\xi\|^{|\beta|-2p} \dfrac{\|\xi\|^{\alpha-1-2|\beta|+2p}|P_{\alpha,|\beta|-p,r}(\|\xi\|^{\alpha})|}{\left(2+\|\xi\|^\alpha\right)^{\frac{r}{2}+|\beta|-p}},\\
& \leq C_{\alpha,\beta,p,r} \left(1+\dots + \|\xi\|^{\alpha(|\beta|-p)}\right) \dfrac{\|\xi\|^{\alpha-1-|\beta|}}{\left(2+\|\xi\|^{\alpha}\right)^{\frac{r}{2}+|\beta|-p}} ,\\
&\leq C_{\alpha,\beta,p,r} \left(1+\dots + \|\xi\|^{\alpha(|\beta|-p)}\right) \dfrac{\|\xi\|^{\alpha-1}}{\left(2+\|\xi\|^{\alpha}\right)^{\frac{r}{2}+|\beta|-p}} \|\xi\|^{-|\beta|}.
\end{align*}
Now, notice that the non-negative 
function $\Psi_{\alpha,|\beta|,r}$ defined, for all $s \geq 0$, by
\begin{align*}
\Psi_{\alpha,|\beta|,r}(s) = \left(1+\dots + s^{\alpha(|\beta|-p)}\right) \dfrac{s^{\alpha-1}}{\left(2+s^{\alpha}\right)^{\frac{r}{2}+|\beta|-p}},
\end{align*}
is bounded on $[0,+\infty)$ as soon as $r \geq 2(\alpha-1)/\alpha$. Thus, for all $\xi \in \bbr^d$ with $\xi \ne 0$ and all $p \in \left\{0, \dots, \lfloor\frac{|\beta|}{2}\rfloor\right\}$,
\begin{align}\label{ineq:bound_2_Hor_Mihlin}
\|\xi\|^{|\beta|-2p} \left| \psi_{\alpha,r}^{(|\beta|-p)}\left(\|\xi\|^2\right)\right| & \leq \tilde{C}_{\alpha,\beta,p,r} \|\xi\|^{-|\beta|},
\end{align}
for some $\tilde{C}_{\alpha,\beta,p}>0$ only depending on $\alpha$, $\beta$ and $p$. Combining \eqref{ineq:bound_1_Hor_Mihlin} and \eqref{ineq:bound_2_Hor_Mihlin} leads to, 
\begin{align}\label{ineq:bound_3_Hor_Mihlin}
\left| D^{\beta}_{\xi}\left(\psi_{\alpha,r}\left(\|\xi\|^2\right)\right) \right| &\leq \tilde{C}_{\alpha,\beta,r} \|\xi\|^{-|\beta|},
\end{align}
for all $\xi \in \bbr^d$, $\xi \ne 0$.  
Next, let $j_1, \dots, j_{|\beta|}\in \{1, \dots, d\}$ be such that $j_k\ne j_{\ell}$ for all $k \ne \ell$ 
in $\{1, \dots, |\beta|\}$ (note that $|\beta| \leq d$). Then, for all $\xi \in \bbr^d$, $\xi \ne 0$,
\begin{align*}
D^{\beta}_{\xi}\left(\psi_{\alpha,r}\left(\|\xi\|^2\right)\right) = \partial^{|\beta|}_{\xi_{j_1} \dots \xi_{j_{|\beta|}}}\left(\psi_{\alpha,r}\left(\|\xi\|^2\right)\right) = 2^{|\beta|} \xi_{j_1} \dots \xi_{j_{|\beta|}} \psi_{\alpha,r}^{(|\beta|)}\left(\|\xi\|^2\right) .
\end{align*}
Thus, 
\begin{align*}
|D^{\beta}_{\xi}\left(\psi_{\alpha,r}\left(\|\xi\|^2\right)\right)|&\leq 2^{|\beta|} \|\xi\|^{|\beta|} \left|\psi_{\alpha,r}^{(|\beta|)}\left(\|\xi\|^2\right)\right|,\\
&\leq C_{\alpha,|\beta|,r} \|\xi\|^{|\beta|} \dfrac{\|\xi\|^{\alpha-1-2|\beta|}\left(1+\dots + \|\xi\|^{\alpha |\beta|}\right)}{(2+ \|\xi\|^{\alpha})^{\frac{r}{2}+|\beta|}}\\
&\leq C_{\alpha,|\beta|,r} \dfrac{\|\xi\|^{\alpha-1}\left(1+\dots + \|\xi\|^{\alpha |\beta|}\right)}{(2+ \|\xi\|^{\alpha})^{\frac{r}{2}+|\beta|}} \|\xi\|^{-|\beta|},\\
&\leq \tilde{C}_{\alpha,|\beta|,r} \|\xi\|^{-|\beta|}.
\end{align*}
To conclude, let us deal with the general case. Let $\gamma_0\in \{1,\dots,|\beta| \}$, let $j_1,\dots, j_{\gamma_0}\in \{1, \dots, d\}$ be such that $j_k\ne j_{\ell}$ for all $k \ne \ell$ in $\{1, \dots, \gamma_0 \}$ and let $m_1, \dots, m_{\gamma_0}$ be integers greater than $1$ such that $\sum_{\ell=1}^{\gamma_0} m_{\ell} = |\beta|$.  
Then, for all $\xi \in \bbr^d$, $\xi \ne 0$,
\begin{align*}
D^{\beta}_{\xi} \left(\psi_{\alpha,r}(\|\xi\|^2)\right) = \partial^{m_1}_{\xi_{j_1}} \dots  \partial^{m_{\gamma_0}}_{\xi_{j_{\gamma_0}}} \left(\psi_{\alpha,r}(\|\xi\|^2)\right), 
\end{align*}
and using Lemma~\ref{lem:simple2} iteratively, 
\begin{align}\label{eq:bound_4_Hor_Mihlin}
D^{\beta}_{\xi} \left(\psi_\alpha(\|\xi\|^2)\right) &= \partial^{m_2}_{\xi_{j_2}} \dots  \partial^{m_{\gamma_0}}_{\xi_{j_{\gamma_0}}} \left(\sum_{p=0}^{\lfloor \frac{m_1}{2} \rfloor} C_p(m_1) \left(\xi_{j_1}\right)^{m_1-2p} \psi_{\alpha,r}^{(m_1-p)}\left(\|\xi\|^2\right)\right) ,\nonumber \\
&= \sum_{p_1=0}^{\lfloor \frac{m_1}{2} \rfloor} \dots  \sum_{p_{\gamma_0}=0}^{\lfloor \frac{m_{\gamma_0}}{2} \rfloor} C_{p_1}(m_1) \dots C_{p_{\gamma_0}}(m_{\gamma_0}) \left(\xi_{j_1}\right)^{m_1-2p_1} \dots \left(\xi_{j_{\gamma_0}}\right)^{m_{\gamma_0}-2p_{\gamma_0}} \psi_{\alpha,r}^{(|\beta|-(p_1+\dots+p_{\gamma_0}))}\left(\|\xi\|^2\right).
\end{align}
Now, for all $\xi \in \bbr^d$ such that $\xi \ne 0$, and all $p_1 ,\dots, p_{\gamma_0}$,
\begin{align}\label{ineq:bound_5_Hor_Mihlin}
\left| \left(\xi_{j_1}\right)^{m_1-2p_1} \dots \left(\xi_{j_{\gamma_0}}\right)^{m_{\gamma_0}-2p_{\gamma_0}} \psi_{\alpha,r}^{(|\beta|-(p_1+\dots+p_{\gamma_0}))}\left(\|\xi\|^2\right) \right| &\leq C_{\alpha,|\beta|,p_1,\dots,p_{\gamma_0},r} \|\xi\|^{|\beta|-2(p_1+\dots+p_{\gamma_0})} \nonumber\\
&\quad\quad\times \dfrac{\|\xi\|^{\alpha-1-2|\beta|+2(p_1+\dots+p_{\gamma_0})}}{\left(2+\|\xi\|^{\alpha}\right)^{\frac{r}{2}+|\beta|-(p_1+\dots+p_{\gamma_0})}}\nonumber \\
&\quad\quad\times \left(1+\dots +\|\xi\|^{\alpha \left(|\beta|-(p_1+\dots+p_{\gamma_0})\right)}\right),\nonumber\\
&\leq C_{\alpha,|\beta|,p_1,\dots,p_{\gamma_0},r} \dfrac{\|\xi\|^{\alpha-1}}{\left(2+\|\xi\|^{\alpha}\right)^{\frac{r}{2}+|\beta|-(p_1+\dots+p_{\gamma_0})}} \nonumber\\
&\quad\quad\times \|\xi\|^{-|\beta|}\left(1+\dots +\|\xi\|^{\alpha \left(|\beta|-(p_1+\dots+p_{\gamma_0})\right)}\right),\nonumber\\
&\leq \tilde{C}_{\alpha,|\beta|,p_1,\dots,p_{\gamma_0},r} \|\xi\|^{-|\beta|}.
\end{align}
Combining \eqref{eq:bound_4_Hor_Mihlin} and \eqref{ineq:bound_5_Hor_Mihlin}, it follows that 
for all $\xi \in \bbr^d$, 
$\xi \ne 0$,
\begin{align*}
\left|D^{\beta}_{\xi} \left(\psi_{\alpha,r}(\|\xi\|^2)\right)\right| \leq \tilde{C}_{\alpha,|\beta|,\gamma_0,r} \|\xi\|^{-|\beta|},
\end{align*}
for some $\tilde{C}_{\alpha,|\beta|,\gamma_0,r}>0$ only depending on $\alpha$, $|\beta|$, $\gamma$ and $r$. Then, \cite[Theorem $5.2.7.$]{G08} implies that, for all $p\in (1,+\infty)$, $S_{\alpha,r}$ is a bounded linear operator from $L^p(\bbr^d, dx)$ to $L^p(\bbr^d, dx)$ as soon as $r \geq 2(\alpha-1)/\alpha$. The end of the proof follows, 
in a straightforward manner, using the representation \eqref{eq:rep_sphe}.
\end{proof}

Before moving to generalizations and sharper versions of the above 
result, a few words are in order.~First, note that the bounds obtained on the quantities $\big|D^{\beta}_{\xi} \left(m_{\alpha,r}(\xi)\right)\big|$, 
for $\beta \in \mathbb{N}_0^d$ (the set of vectors of $\bbr^d$ which coordinates are non-negative integers) with $|\beta| \leq \lfloor \frac{d}{2} \rfloor+1$, depend 
on the dimension. Thus, the corresponding bound on the operator norm of $S_{\alpha,r}$ does not seem to be sharp. 
On the other hand, in \cite{AC94}, for a certain class of radial multipliers, dimension-free estimates are obtained for the $L^p(\bbr^d,dx)$-boundedness of the corresponding convolution operators (see \cite[Theorem 1]{AC94}). 
The proof there rests upon transference methods (see \cite{GW77} for a standard reference).

In order to get a more precise (dimension-free) estimate on the constant $C_{\alpha, d, p, r}$ 
of Lemma~\ref{lem:Hor_Mihlin} (and to generalize this result to other symmetric $\alpha$-stable L\'evy measures), 
let us next combine Proposition~\ref{prop:Riesz_Representation} together with 
transference methods and one-dimensional multiplier theory to get:   

\begin{thm}\label{prop:Riesz_sharp}
Let $\alpha \in (1,2)$, let $r \in [r_{\min}(\alpha),+\infty)$, let $\nu_\alpha$ be a non-degenerate symmetric L\'evy measure on $\bbr^d$ satisfying \eqref{eq:scale} , let $\mu_\alpha$ be the associated $\alpha$-stable probability measure on $\bbr^d$ given by \eqref{def:stable}, let ${\mathbf D}^{\alpha-1}$ be defined by \eqref{eq:FracGradWB} and let $\mathcal{A}_\alpha$ be defined by \eqref{eq:Stheatgen}. Then, for all $p\in (1,+\infty)$ and all $f\in L^p(\bbr^d,dx)$,
\begin{align}\label{ineq:continuity_Riesz}
\| \|{\mathbf D}^{\alpha-1} \circ \left(E-\mathcal{A}_\alpha\right)^{-\frac{r}{2}}(f)\| \|_{L^p(\bbr^d,dx)} \leq C_{\alpha,p,r} \left(\int_{\bbr^d} \|y\| \mu_\alpha(dy)\right) \|f\|_{L^p(\bbr^d, dx)},
\end{align}
where $C_{\alpha,p,r}$ is given by
\begin{align}\label{ineq:const_Riesz}
C_{\alpha,p,r} = \frac{C}{2\Gamma(\frac{r}{2})} \max \left(p, (p-1)^{-1}\right) \max \left( (1-\alpha(1-\frac{r}{2}))\Gamma\left(\frac{1}{\alpha}-(1-\frac{r}{2})\right) +\alpha \Gamma\left(\frac{r}{2}+\frac{1}{\alpha}\right), \|\rho_{\alpha,r}\|_{\infty,\bbr}\right),
\end{align}
with $\rho_{\alpha,r}$ defined, for all $\xi \in \bbr$, by
\begin{align*}
\rho_{\alpha,r}(\xi) = 2i \alpha  \int_{0}^{+\infty}  \dfrac{e^{-t^\alpha}}{t^{\alpha(1-\frac{r}{2})}} \sin\left(\xi t\right)dt,
\end{align*}
and with $C>0$ a numerical constant independent of $\alpha$, $p$, $d$ and $r$. 
\end{thm}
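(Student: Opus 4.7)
The starting point is the representation~\eqref{eq:Riesz_StableH} from Proposition~\ref{prop:Riesz_Representation}.  For each $y\in\bbr^d$ with $y\ne0$, introduce the scalar operator
\begin{align*}
T_y(f)(x):=\int_0^{+\infty}\frac{e^{-t}}{\Gamma(r/2)\,t^{2-\frac{1}{\alpha}-\frac{r}{2}}}\Bigl(f(x+t^{\frac{1}{\alpha}}y)-f(x-t^{\frac{1}{\alpha}}y)\Bigr)\,dt,
\end{align*}
so that $\mathbf{D}^{\alpha-1}\circ(E-\mathcal{A}_\alpha)^{-\frac{r}{2}}(f)(x)=\tfrac12\int_{\bbr^d}y\,T_y(f)(x)\,\mu_\alpha(dy)$.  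Bounding the Euclidean norm of this vector-valued integral pointwise by $\tfrac12\int\|y\|\,|T_y f(x)|\,\mu_\alpha(dy)$ and applying Minkowski's integral inequality in $L^p(\bbr^d,dx)$, the plan is to reduce everything to a uniform, \emph{dimension-free} $L^p\!\to\! L^p$ estimate on the single scalar operator $T_y$, independent of both $y$ and $d$.  Combined with the finiteness of the first moment $\int_{\bbr^d}\|y\|\mu_\alpha(dy)$ (valid because $\alpha>1$), this would produce the right-hand side of~\eqref{ineq:continuity_Riesz}.

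The key observation is that $T_y$ is a Fourier multiplier operator whose symbol depends on $\xi$ only through the scalar $\langle y,\xi\rangle$.  A direct Fourier computation, followed by the substitution $s=t^{1/\alpha}$, yields
\begin{align*}
\mathcal{F}(T_y f)(\xi)=\frac{\rho_{\alpha,r}(\langle y,\xi\rangle)}{\Gamma(r/2)}\,\mathcal{F}(f)(\xi),
\end{align*}
with $\rho_{\alpha,r}$ precisely as in the statement.  Writing $\hat y:=y/\|y\|$ and decomposing $\bbr^d=\bbr\hat y\oplus\hat y^\perp$, a standard slicing (Fubini) argument identifies the $L^p(\bbr^d,dx)$ operator norm of $T_y$ with the $L^p(\bbr,ds)$ operator norm of the one-dimensional Fourier multiplier operator with symbol $\zeta\mapsto\rho_{\alpha,r}(\|y\|\zeta)/\Gamma(r/2)$.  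By dilation invariance of the one-dimensional Mihlin-H\"ormander norm, this is independent of $\|y\|$ and of the ambient dimension $d$.

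It remains to apply the classical one-dimensional Mihlin-H\"ormander theorem to $\rho_{\alpha,r}$.  The bound $\|\rho_{\alpha,r}\|_{\infty,\bbr}$ enters directly.  To control $\xi\rho_{\alpha,r}'(\xi)$, I would differentiate under the integral and integrate by parts once in $t$; the boundary terms vanish as soon as $r\ge r_{\min}(\alpha)$, i.e.\ $\alpha(1-r/2)\le 1$.  After the further change of variables $u=t^\alpha$, the two resulting integrals are explicit Gamma functions, producing
\begin{align*}
\sup_{\xi\in\bbr}|\xi\,\rho_{\alpha,r}'(\xi)|\le 2\bigl(1-\alpha(1-\tfrac{r}{2})\bigr)\Gamma\bigl(\tfrac{1}{\alpha}-(1-\tfrac{r}{2})\bigr)+2\alpha\,\Gamma\bigl(\tfrac{r}{2}+\tfrac{1}{\alpha}\bigr).
\end{align*}
The Mihlin-H\"ormander theorem on $\bbr$, which yields the sharp $\max(p,(p-1)^{-1})$ dependence, then gives a constant of the form $C\max(p,(p-1)^{-1})\max\bigl(\|\rho_{\alpha,r}\|_{\infty,\bbr},\sup_\xi|\xi\rho_{\alpha,r}'(\xi)|\bigr)/\Gamma(r/2)$ with $C$ a numerical constant independent of $\alpha,p,d,r$; assembling with the previous two steps yields \eqref{ineq:continuity_Riesz}--\eqref{ineq:const_Riesz}.

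The main obstacle is the endpoint case $r=r_{\min}(\alpha)$, where $\alpha(1-r/2)=1$.  Near $t=0^+$ the integrand of $\rho_{\alpha,r}$ then behaves like $\sin(\xi t)/t$, and a crude absolute-value estimate blows up logarithmically in $|\xi|$; establishing $\|\rho_{\alpha,r_{\min}}\|_{\infty,\bbr}<+\infty$ and justifying the integration by parts for $\xi\rho_{\alpha,r}'(\xi)$ both require exploiting the oscillation of $\sin(\xi t)$ (e.g.\ via a Dirichlet-type argument on a dyadic split around $t=1/|\xi|$) rather than estimating it in absolute value.  Correspondingly, the coefficient $(1-\alpha(1-r/2))$ in the Gamma bound must be treated as vanishing in the limit to cancel the blow-up of $\Gamma(1/\alpha-(1-r/2))$ as $r\downarrow r_{\min}(\alpha)$.
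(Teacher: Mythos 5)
Your proposal is correct and follows essentially the same route as the paper: the representation from Proposition~\ref{prop:Riesz_Representation}, Minkowski's integral inequality against $\mu_\alpha(dy)$, reduction of $T^y_{\alpha,r}$ to the one-dimensional multiplier $\rho_{\alpha,r}$ by slicing along the direction of $y$ (the paper phrases this as a Coifman--Weiss-type transference through the translation operators $R^y_u$, but the mechanism is identical since the symbol depends on $\xi$ only through $\langle y;\xi\rangle$), and the one-dimensional Mihlin--H\"ormander theorem with exactly the bound $|\xi\,\rho_{\alpha,r}'(\xi)|\leq 2\bigl((1-\alpha(1-\tfrac r2))\Gamma(\tfrac1\alpha-(1-\tfrac r2))+\alpha\Gamma(\tfrac r2+\tfrac1\alpha)\bigr)$ obtained by the same integration by parts in $t$. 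The endpoint issue you flag for $\|\rho_{\alpha,r_{\min}(\alpha)}\|_{\infty,\bbr}$ is resolved in the paper by writing $\rho_{\alpha,r}(\xi)=2i\,\Gamma(\tfrac2\alpha+\tfrac r2-1)\int_0^\xi\varphi_{\alpha,r}(u)\,du$ with $\varphi_{\alpha,r}$ the cosine transform of a probability density decaying like $u^{-2}$ after two integrations by parts, which exploits the oscillation in the same way as your proposed dyadic split around $t=1/|\xi|$.
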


\begin{proof}
First, for $y \in \bbr^d$ fixed,  introduce the operator $T^y_{\alpha,r}$ defined, 
for all $f \in \mathcal{S}(\bbr^d)$ and all $x\in \bbr^d$, 
by 
\begin{align*}
T^y_{\alpha,r}(f)(x):= \int_0^{+\infty} \dfrac{e^{-t}}{t^{2-\frac{r}{2}-\frac{1}{\alpha}}} \left(f(x+t^{\frac{1}{\alpha}}y)-f(x-t^{\frac{1}{\alpha}}y)\right) dt.  
\end{align*}
Then, from Proposition~\ref{prop:Riesz_Representation}, for all $r>2(\alpha-1)/\alpha$, all $f \in \mathcal{S}(\bbr^d)$ and all $x\in \bbr^d$,
\begin{align}\label{eq:Riesz_Int_T_1}
{\mathbf D^{\alpha-1}} \circ \left(E-\mathcal{A}_\alpha\right)^{-\frac{r}{2}}(f)(x) =\frac{1}{2\Gamma(\frac{r}{2})}\int_{\bbr^d} y T^y_{\alpha,r}(f)(x) \mu_\alpha(dy).
\end{align}
Moreover,  in the singular case $r=r_{\min}(\alpha)$,
\begin{align}\label{eq:Riesz_Int_T_2}
{\mathbf D^{\alpha-1}} \circ \left(E-\mathcal{A}_\alpha\right)^{-\frac{r_{\min}(\alpha)}{2}}(f) = \frac{1}{2\Gamma(\frac{r_{\min}(\alpha)}{2})}\int_{\bbr^d} y T^y_{\alpha,r_{\min}(\alpha)}(f) \mu_\alpha(dy),
\end{align}
(which can easily be seen by a truncation argument).~Now, let $y$ be fixed in 
 $\bbr^d$, $y \ne 0$, and let $(\psi_u^y)_{u \in \bbr}$ be the continuous family of maps from $ \bbr^d$ to $\bbr^d$ defined, for all $x \in \bbr^d$ and all $u \in \bbr$, by
\begin{align*}
\psi_u^y(x) = x+u y.
\end{align*}
Note that, for $u,v \in \bbr$, $\psi^y_{u+v}(x,y) = \psi^y_u \circ \psi^y_v(x)$, that $\psi_0^y(x)=x$ and that $\psi^{y}_u$ is measure preserving in that, for all $p \in (1,+\infty)$, all $f \in L^p(\bbr^d, dx)$ and all $u \in \bbr$,
\begin{align*}
\int_{\bbr^d} |f\circ \psi^y_{u}(x)|^pdx = \int_{\bbr^d} |f(x+u y)|^pdx = \int_{\bbr^d} |f(x)|^pdx,
\end{align*}
from the translation invariance of the Lebesgue measure. 
Associated with this continuous family of maps, introduce the continuous family of linear operators, $(R_u^y)_{u\in \bbr}$, defined, for all $f \in L^p(\bbr^d,dx)$ and all $u\in \bbr$, by
\begin{align*}
R_u^y (f) =f \circ \psi_u^y.
\end{align*}
Note that, for all $f,g \in \mathcal{S}(\bbr^d)$ and all $u \in \bbr$,
\begin{align*}
\int_{\bbr^d} R_u^y (f)(x) g(x) dx = \int_{\bbr^d} f(x) R_{-u}^y(g)(x) dx,
\end{align*}
so that the operator $R_{-u}^y$ is the dual of the operator $R_u^y$ in $L^2(\bbr^d,dx)$. Now, by the change of variables $u = t^{{1/\alpha}}$, for all $f \in \mathcal{S}(\bbr^d)$ and all $x\in \bbr^d$, 
\begin{align}\label{eq:Hilbert_TR_Rep}
T^y_{\alpha,r}(f)(x) = \alpha \int_0^{+\infty} \dfrac{e^{-u^\alpha}}{u^{\alpha (1 - \frac{r}{2})}} \left(R^y_u(f)(x)-R_{-u}^y(f)(x)\right)du.
\end{align}
Next, let us further introduce the one-dimensional integral operator $\mathcal{H}_{\alpha,r}$ defined, for all $f \in \mathcal{S}(\bbr)$ and all $v \in \bbr$, by
\begin{align}\label{eq:def_Halpha}
\mathcal{H}_{\alpha,r}(f)(v) = \alpha \int_{0}^{+\infty}  \dfrac{e^{-u^\alpha}}{u^{\alpha(1- \frac{r}{2})}} \left(f(v+u)-f(v-u)\right) du.
\end{align}
Note that when $r = r_{\min}(\alpha)= 2(\alpha-1)/\alpha$, the operator $\mathcal{H}_{\alpha,r_{\min}(\alpha)}$ is just a tempered version of the classical Hilbert transform on $\bbr$. From \eqref{eq:Hilbert_TR_Rep} and \eqref{eq:def_Halpha}, it follows that 
for all $f \in \mathcal{S}(\bbr^d)$ and all $x\in \bbr^d$,
\begin{align}\label{eq:Transf1}
T^y_{\alpha,r}(f)(x)& = \mathcal{H}_{\alpha,r}(R^y(f)(x))(0).
\end{align}
Moreover, since $y$ is a non-null vector of $\bbr^d$, every $x \in \bbr^d$ can be written as $x = v_x e_y + w_x$ where $e_y={y/\|y\|}$, where $v_x$ is a scalar in $\bbr$ and where 
$w_x$ belongs to the orthogonal complement of the linear span of $e_y$.  
Thus, thanks to \eqref{eq:Transf1}, for all $f \in \mathcal{S}(\bbr^d)$ and all $x \in \bbr^d$, 
\begin{align}\label{eq:Transf2}
T^y_{\alpha,r}(f)(x)& = \mathcal{H}_{\alpha,r}(R^y(f)(x))(0),\nonumber \\
& = \mathcal{H}_{\alpha,r}(R^y(f)(v_x e_y + w_x))(0),\nonumber\\
& = \mathcal{H}_{\alpha,r}(R^y(f)(w_x))\left(\frac{v_x}{\|y\|}\right).
\end{align}
Assume now that the following bound (to be shown) holds true: for all $p \in (1,+\infty)$ and all $f \in L^p(\bbr,dx)$,
\begin{align}\label{ineq:Lp_Halpha}
\left\|\mathcal{H}_{\alpha,r}(f)\right\|_{L^p(\bbr,dx)} \leq \tilde{C}_{\alpha,p,r} \|f\|_{L^p(\bbr,dx)},
\end{align}
for some positive constant $\tilde{C}_{\alpha,p,r}$ depending on $\alpha$, on $p$ and on $r$. Then, by transference, for all $p \in (1,+\infty)$ and all $f \in \mathcal{S}(\bbr^d)$,
\begin{align*}
\int_{\bbr^d} |T^y_{\alpha,r}(f)(x)|^p dx &= \int_{\bbr^{d-1}}  \left(\int_{\bbr} |\mathcal{H}_{\alpha,r}(R^y(f)(w))\left(\frac{v}{\|y\|}\right)|^p dv\right) dw,\\
&=\|y\| \int_{\bbr^{d-1}}  \left(\int_{\bbr} |\mathcal{H}_{\alpha,r}(R^y(f)(w))\left(v\right)|^p dv\right) dw,\\
& = \|y\| \left\| \|\mathcal{H}_{\alpha,r}(R^y(f)(w))\|_{L^p(\bbr, dv)}\right\|^p_{L^p(\bbr^{d-1}, dw)},\\
& \leq \|y\| \tilde{C}_{\alpha,p,r}^p \| \|R^y(f)(w)\|_{L^p(\bbr, dv)}\|^p_{L^p(\bbr^{d-1}, dw)} ,\\
& \leq \|y\| \tilde{C}_{\alpha,p,r}^p \int_{\bbr^{d-1}} \left( \int_{\bbr} |R_v^y(f)(w)|^p dv\right) dw , \\
& \leq \|y\| \tilde{C}_{\alpha,p,r}^p \int_{\bbr^{d-1}} \left( \int_{\bbr} |f(vy+w)|^p dv\right) dw ,\\
&\leq \tilde{C}_{\alpha,p,r}^p \int_{\bbr^{d-1}} \left( \int_{\bbr} |f(ve_y+w)|^p dv\right) dw.
\end{align*}
Thus, for all $p \in (1,+\infty)$ and all $f \in \mathcal{S}(\bbr^d)$,
\begin{align}\label{ineq:Lp_T}
\|T^y_{\alpha,r}(f)\|_{L^p(\bbr^d,dx)} \leq \tilde{C}_{\alpha,p,r} \|f\|_{L^p(\bbr^d,dx)}.
\end{align}
Combining \eqref{eq:Riesz_Int_T_1} (or \eqref{eq:Riesz_Int_T_2}) and \eqref{ineq:Lp_T} together with Minkowski's integral inequality leads to:
\begin{align*}
\| \|{\mathbf D^{\alpha-1}} \circ \left(E-\mathcal{A}_\alpha\right)^{-\frac{r}{2}}(f)\| \|_{L^p(\bbr^d,dx)} &= \frac{1}{2\Gamma(\frac{r}{2})} \left\| \left\|\int_{\bbr^d} y T^y_{\alpha,r}(f) \mu_\alpha(dy) \right\| \right\|_{L^p(\bbr^d,dx)}, \\
&\leq \frac{\tilde{C}_{\alpha,p,r}}{2\Gamma(\frac{r}{2})} \left( \int_{\bbr^d} \|y\| \mu_\alpha(dy) \right) \|f\|_{L^p(\bbr^d, dx)}.
\end{align*}
To conclude the proof of the theorem, let us prove the inequality \eqref{ineq:Lp_Halpha}. First, by Fourier inversion, for all $f \in \mathcal{S}(\bbr)$ and all $x\in \bbr$,
\begin{align*}
\mathcal{H}_{\alpha,r}(f)(x) &=  \alpha \int_{0}^{+\infty}  \dfrac{e^{-u^\alpha}}{u^{\alpha (1-\frac{r}{2})}} \left(f(x+u)-f(x-u)\right) du,\\
&= \int_{0}^{+\infty}  \dfrac{e^{-t}}{t^{2-\frac{r}{2}-\frac{1}{\alpha}}} \left(f(x+t^{\frac{1}{\alpha}})-f(x-t^{\frac{1}{\alpha}})\right) dt,\\
&= \frac{1}{(2\pi)} \int_{\bbr} \mathcal{F}(f)(\xi) e^{i x \xi} \left(\int_{0}^{+\infty}  \dfrac{e^{-t}}{t^{2-\frac{r}{2}-\frac{1}{\alpha}}} \left(e^{i \xi t^{\frac{1}{\alpha}}}-e^{-i \xi t^{\frac{1}{\alpha}}}\right)dt \right) d\xi,\\
&= \frac{1}{(2\pi)} \int_{\bbr} \mathcal{F}(f)(\xi) e^{i x \xi} \rho_{\alpha,r}(\xi) d\xi,
\end{align*}
where, for all $\xi \in \bbr$,
\begin{align*}
\rho_{\alpha,r}(\xi) := 2i \int_{0}^{+\infty}  \dfrac{e^{-t}}{t^{2-\frac{r}{2}-\frac{1}{\alpha}}} \sin\left(\xi t^{\frac{1}{\alpha}}\right)dt = 2i \alpha  \int_{0}^{+\infty}  \dfrac{e^{-t^\alpha}}{t^{\alpha(1-\frac{r}{2})}} \sin\left(\xi t\right)dt.
\end{align*}
Let us precisely bound the symbol $\rho_{\alpha,r}$ (and its first derivative) by taking into account 
the regularization effect induced by the sine function.  
First, observe that $\rho_{\alpha,r}(0)=0$ and that 
$\lim_{|\xi| \rightarrow +\infty} \rho_{\alpha,r}(\xi) = 0$ (by the Riemann-Lebesgue lemma), for all $r \in (r_{\min}(\alpha),+\infty)$, and that, the function $\rho_{\alpha,r}$ is continuous on $\bbr$, so that $\|\rho_{\alpha,r}\|_{\infty,\bbr} <+\infty$, for all $r\in (r_{\min}(\alpha),+\infty)$. Moreover, by a Fubini argument, for all $\xi >0$ and all $r \geq r_{\min}(\alpha)$,
\begin{align*}
\rho_{\alpha,r}(\xi) &= 2 \alpha i \int_{0}^{+\infty}  \dfrac{e^{-t^\alpha}}{t^{\alpha(1- \frac{r}{2})}} \sin\left(\xi t\right)dt, \\
& = 2i \alpha \int_0^\xi \left(\int_0^{+\infty} e^{-t^\alpha} t^{1- \alpha(1-\frac{r}{2})} \cos(tu) dt\right) du, \\
& = 2 \Gamma\left(\frac{2}{\alpha}-1+\frac{r}{2}\right) i \int_0^\xi \left(\int_0^{+\infty} f_{\alpha,r}(t) \cos(tu) dt\right) du,
\end{align*}
where $f_{\alpha,r}$ is the probability density function defined, for all $t>0$, by
\begin{align*}
f_{\alpha,r}(t) = \frac{ \alpha t^{1- \alpha(1-\frac{r}{2})}}{\Gamma\left(\frac{2}{\alpha}+\frac{r}{2}-1\right)} e^{-t^\alpha}.
\end{align*}
The function $f_{\alpha,r}$ is differentiable on $(0,+\infty)$ and its first derivative, $f_{\alpha,r}^\prime$, 
is integrable on $(0,+\infty)$ so that $\varphi_{\alpha,r}$ defined, for all $u \in (0,+\infty)$, by
\begin{align*}
\varphi_{\alpha,r}(u) = \int_0^{+\infty} f_{\alpha,r}(t) \cos(tu) dt,
\end{align*}
decays at least as $1/u$ as $u$ tends to $+\infty$. In fact, two integrations by parts and since $\alpha \in (1,2)$ prove that $\|u^2 \varphi_{\alpha,r_{\min}(\alpha)} \|_{\infty, \bbr}<+\infty$. Then, $\|\rho_{\alpha,r}\|_{\infty, \bbr}<+\infty$, for all $r \geq r_{\min}(\alpha)$.

Moreover, by straightforward computations,
\begin{align*}
\int_{0}^{+\infty} |f^{\prime}_{\alpha,r}(t)| dt \leq \dfrac{(1-\alpha(1-\frac{r}{2}))\Gamma\left(\frac{1}{\alpha}-(1-\frac{r}{2})\right)}{\Gamma\left(\frac{2}{\alpha}+\frac{r}{2}-1\right)} + \alpha \frac{\Gamma(\frac{1}{\alpha}+\frac{r}{2})}{\Gamma(\frac{2}{\alpha}+\frac{r}{2}-1)}.
\end{align*}
Then, $\rho_{\alpha,r}$ is differentiable on $\bbr$ and, for all $\xi>0$ (and similarly for all $\xi<0$),
\begin{align*}
\rho^{\prime}_{\alpha,r}(\xi) = 2 \Gamma\left(\frac{2}{\alpha}+\frac{r}{2}-1\right) i \varphi_{\alpha,r}(\xi) = -2 \Gamma\left(\frac{2}{\alpha}+\frac{r}{2}-1\right) i \frac{1}{\xi} \int_{0}^{+\infty} f^{\prime}_{\alpha,r}(t) \sin\left(t \xi\right) dt,
\end{align*}
where an integration by parts has been used in the last equality. Thus, for all $\xi \in \bbr$ with $\xi \ne 0$,
\begin{align}\label{ineq:decay_deriv}
\left|\rho^{\prime}_{\alpha,r}(\xi)\right| \leq 2\left( (1-\alpha(1-\frac{r}{2}))\Gamma\left(\frac{1}{\alpha}-(1-\frac{r}{2})\right)+\alpha \Gamma\left(\frac{r}{2}+\frac{1}{\alpha}\right)\right) \frac{1}{|\xi|}.
\end{align} 
Finally, \cite[Theorem $5.2.7$]{G08} combined with \eqref{ineq:decay_deriv} concludes the proof of the 
theorem. 
\end{proof}

Before moving to a reverse inequality, let us comment a bit on the bound just obtained.  
Taking $\nu_{\alpha}^{\rm rot}$ (given by \eqref{eq:Levy_Rot}) and $\alpha=2$, 
Theorem~\ref{prop:Riesz_sharp} suggests that, for all $r \geq 1$, all $p \in (1,+\infty)$ and all $f\in L^p(\bbr^d,dx)$, 
\begin{align}\label{ineq:continuity_Riesz_Gaussian}
\| \| \nabla \circ \left(E-\mathcal{A}_H\right)^{-\frac{r}{2}}(f)\| \|_{L^p(\bbr^d,dx)} \leq C_{2,p,r} \left(\int_{\bbr^d} \|y\| \gamma(dy)\right) \|f\|_{L^p(\bbr^d, dx)},
\end{align}
with $C_{2,p,r}$ given by
\begin{align}\label{eq:constant_Gaussian}
C_{2,p,r} = \frac{C}{2\Gamma(\frac{r}{2})} \max \left(p, (p-1)^{-1}\right) \max \left( (r-1)\Gamma\left(\frac{r-1}{2}\right) + 2 \Gamma\left(\frac{r+1}{2}\right) , \|\rho_{2,r}\|_{\infty} \right).
\end{align}
for some $C>0$ independent of $p$, $d$ and $r$.  Above, the pre-factor 
$\int_{\bbr^d} \|y\| \gamma(dy)$ depends on the dimension $d$ of the ambient space $\bbr^d$ and 
so \eqref{ineq:continuity_Riesz_Gaussian} is not optimal.  
Actually, in this case and as explained 
next, it is possible to refine the argument of the proof of Theorem~\ref{prop:Riesz_sharp}, by using the independence of the coordinates of a random vector with law $\gamma$, in order to reach dimension-free estimate.  
Indeed, by duality, for all $f\in \mathcal{S}(\bbr^d)$ and all $x\in \bbr^d$,
\begin{align*}
\| \nabla \circ \left(E-\mathcal{A}_H\right)^{-\frac{r}{2}}(f)(x)\| & = \frac{1}{2 \Gamma(\frac{r}{2})} \underset{z \in \bbr^d, \|z\| = 1}{\sup} \left|\langle z; \int_{\bbr^d} y T^y_{2,r}(f)(x) \gamma(dy)  \rangle \right| , \\
& = \frac{1}{2 \Gamma(\frac{r}{2})} \underset{z \in \bbr^d, \|z\| = 1}{\sup} \left| \int_{\bbr^d} \langle z; y \rangle T^y_{2,r}(f)(x) \gamma(dy)\right|.  
\end{align*}
Now, by H\"older's inequality and since, under $\gamma$, $\langle z ; y\rangle $ is a centered Gaussian random variable with variance $\sum_{k=1}^d z_k^2=1$, 
\begin{align*}
\| \nabla \circ \left(E-\mathcal{A}_H\right)^{-\frac{r}{2}}(f)(x)\| & \leq \frac{\gamma_2(q)}{2 \Gamma(\frac{r}{2})} \left(\int_{\bbr^d} |T^y_{2,r}(f)(x)|^p \gamma(dy)\right)^{\frac{1}{p}},
\end{align*}
with $q=p/(p-1)$ and 
\begin{align}\label{eq:gamma_2_gauss}
\gamma_2(q) := \left(\int_{\bbr} |t|^q e^{- \frac{|t|^2}{2}} \frac{dt}{(2\pi)^{\frac{1}{2}}}\right)^{\frac{1}{q}}.
\end{align}
Then, \eqref{ineq:continuity_Riesz_Gaussian} with $r = 1$ improves to  
\begin{align}\label{ineq:continuity_Riesz_Gaussian_dimension_free}
\| \| \nabla \circ \left(E-\mathcal{A}_H\right)^{-\frac{1}{2}}(f)\| \|_{L^p(\bbr^d,dx)} \leq \tilde{C} \max \left(p, (p-1)^{-1}\right)  \gamma_2(q) \|f\|_{L^p(\bbr^d, dx)}.
\end{align}
Recalling that $\gamma_2(q) \sim \sqrt{\frac{q}{e}}$, as $q\to +\infty$, one sees that, above, 
the dependency on $p$ (as $p\to 1$) is comparable to the one obtained in \cite{P88}.  
An analogous reasoning can be applied to the general stable framework of Theorem~\ref{prop:Riesz_sharp}.  
Indeed, once again, by duality and H\"older's inequality, for all $r \in [r_{\min}(\alpha); +\infty)$, all $p \in (\alpha/(\alpha-1),+\infty)$ and all $f \in \mathcal{S}(\bbr^d)$, 
\begin{align}\label{ineq:continuity_Riesz_Sharp}
\| \|{\mathbf D^{\alpha-1}} \circ \left(E-\mathcal{A}_\alpha\right)^{-\frac{r}{2}}(f)\| \|_{L^p(\bbr^d,dx)} \leq C_{\alpha,p,r} \underset{z \in \bbr^d, \|z\| = 1}{\sup}\left(\int_{\bbr^d} |\langle y ; z \rangle|^q \mu_\alpha(dy)\right)^{\frac{1}{q}} \|f\|_{L^p(\bbr^d, dx)},
\end{align}
$q=p/(p-1)$.  Let us discuss briefly how to estimate the quantity 
$\gamma_\alpha(q)$ defined, for all $q \in (1, \alpha)$, by
\begin{align}\label{eq:def_gamma_alpha}
\gamma_\alpha(q) := \underset{z \in \bbr^d, \|z\| = 1}{\sup}\left(\int_{\bbr^d} |\langle y ; z \rangle|^q \mu_\alpha(dy)\right)^{{1/q}},
\end{align}
based on assumptions on the corresponding L\'evy measure $\nu_\alpha$. For all $z \in \mathbb{S}^{d-1}$, under $\mu_\alpha$, the random variable $\langle z;y\rangle$ has an $\alpha$-stable distribution with characteristic function given (from \eqref{eq:rep_spectral_measure}), for all $\xi \in \bbr$, by
\begin{align*}
\int_{\bbr^d} e^{i \xi \langle z;y\rangle} \mu_\alpha(dy) = \exp \left(-|\xi|^\alpha \int_{\mathbb{S}^{d-1}} |\langle z; x \rangle|^\alpha \lambda_1(dx)\right).  
\end{align*}
In other words, under $\mu_\alpha$, $\langle z;y\rangle$ is an $\alpha$-stable symmetric random variable on $\bbr$ with scale parameter depending on $z$. Thus, denoting by $Y_\alpha$ a random variable with characteristic function given, for all $\xi \in \bbr$, by $\bbe \exp (i \xi Y_\alpha) = \exp \left( - |\xi|^\alpha\right)$, one has,
\begin{align*}
\int_{\bbr^d} e^{i \xi \langle z;y\rangle} \mu_\alpha(dy) = \bbe e^{i \xi \sigma_\alpha(z) Y_\alpha},
\end{align*}
with $\sigma_\alpha(z)$ defined, for all $z\in \mathbb{S}^{d-1}$, by $ \sigma_\alpha(z) = \left( \int_{\mathbb{S}^{d-1}} |\langle z; x \rangle|^\alpha \lambda_1(dx) \right)^{{1/\alpha}}$.  Then, for all $q \in (1, \alpha)$, 
\begin{align}\label{eq:rep_gamma_alpha}
\gamma_\alpha(q) := \left(\bbe |Y_\alpha|^q\right)^{{1/q}} \underset{z \in \bbr^d, \|z\| = 1}{\sup}\sigma_\alpha(z). 
\end{align}

{\it In particular, for the rotationally invariant $\alpha$-stable probability measure on $\bbr^d$,  \eqref{ineq:continuity_Riesz_Sharp} improves into the following dimension-free estimate:} 

\begin{cor}\label{cor:dimension_free_euclidean}
Let $\alpha \in (1,2)$, let $r \in [r_{\min}(\alpha),+\infty)$, let $\nu_{\alpha}^{\rm rot}$ be the L\'evy measure on $\bbr^d$ defined by \eqref{eq:Levy_Rot}, let $\mu_{\alpha}^{\rm rot}$ be the associated rotationally invariant $\alpha$-stable probability measure on $\bbr^d$ given by \eqref{eq:charac_rot}, let $\mathbf{D}^{\alpha-1,\operatorname{rot}}$ be defined by \eqref{eq:FracGradWB} with $\nu_{\alpha}^{\rm rot}$ and let $\mathcal{A}_\alpha^{\operatorname{rot}}$ be defined by \eqref{eq:Stheatgen} with $\nu_{\alpha}^{\rm rot}$. Then, for all $p\in (\alpha/(\alpha-1),+\infty)$ and all $f\in L^p(\bbr^d,dx)$,
\begin{align}\label{ineq:continuity_Riesz_Sharp_2}
\| \| \mathbf{D}^{\alpha-1,\operatorname{rot}} \circ \left(E-\mathcal{A}_\alpha^{\operatorname{rot}}\right)^{-\frac{r}{2}}(f)\| \|_{L^p(\bbr^d,dx)} \leq \frac{C_{\alpha,p,r}}{2^{{1/\alpha}}} \left( \bbe |Y_\alpha|^{q}\right)^{{1/q}} \|f\|_{L^p(\bbr^d, dx)},
\end{align}
with $q = p/(p-1)$, $C_{\alpha,p,r}$ given by \eqref{ineq:const_Riesz} and $Y_\alpha$ is a real-valued random variable with characteristic function given, for all $\xi \in \bbr$, by $\bbe \exp(i Y_\alpha \xi) = \exp \left( - |\xi|^\alpha\right)$.
\end{cor}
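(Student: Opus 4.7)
The plan is to deduce the corollary as the rotationally invariant specialization of the refined, duality-based bound \eqref{ineq:continuity_Riesz_Sharp} that was established in the discussion preceding the statement. Concretely, starting from the representation \eqref{eq:Riesz_Int_T_1} (or \eqref{eq:Riesz_Int_T_2} in the singular endpoint) together with the one-dimensional transference estimate \eqref{ineq:Lp_T} from the proof of Theorem~\ref{prop:Riesz_sharp}, I would write, for all $x \in \bbr^d$ and $f \in \mathcal{S}(\bbr^d)$,
\begin{align*}
\|\mathbf{D}^{\alpha-1,\operatorname{rot}} \circ (E-\mathcal{A}_\alpha^{\operatorname{rot}})^{-\frac{r}{2}}(f)(x)\|
&= \frac{1}{2\Gamma(\frac{r}{2})} \sup_{\|z\|=1} \left| \int_{\bbr^d} \langle z ; y \rangle\, T^y_{\alpha,r}(f)(x) \, \mu_\alpha^{\operatorname{rot}}(dy)\right|,
\end{align*}
and then apply H\"older's inequality in the $y$ variable with conjugate exponents $(p,q)$, followed by the $L^p(\bbr^d,dx)$ estimate on $T^y_{\alpha,r}$ obtained by transference. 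The one-dimensional multiplier bound $\tilde C_{\alpha,p,r}$ from the proof of Theorem~\ref{prop:Riesz_sharp} will absorb into $C_{\alpha,p,r}$, giving the refined inequality \eqref{ineq:continuity_Riesz_Sharp} with the dimensional prefactor $\gamma_\alpha(q)$ defined by \eqref{eq:def_gamma_alpha}.

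The rest of the argument is to show that, in the rotationally invariant setting, $\gamma_\alpha(q)$ is dimension-free. Using \eqref{eq:rep_gamma_alpha}, it suffices to compute $\sigma_\alpha(z)$ for the spectral measure $\lambda_1^{\operatorname{rot}}$ associated with $\mu_\alpha^{\operatorname{rot}}$. By the normalization \eqref{eq:charac_rot} and the spectral representation \eqref{eq:rep_spectral_measure}, for every $\xi \in \bbr^d$,
\begin{align*}
\int_{\mathbb{S}^{d-1}} |\langle \xi ; y \rangle|^\alpha \lambda_1^{\operatorname{rot}}(dy) = \frac{\|\xi\|^\alpha}{2},
\end{align*}
so that for any $z \in \mathbb{S}^{d-1}$, $\sigma_\alpha(z) = 2^{-1/\alpha}$, independently of $d$. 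Inserting this into \eqref{eq:rep_gamma_alpha} yields $\gamma_\alpha(q) = 2^{-1/\alpha}\, (\bbe |Y_\alpha|^q)^{1/q}$, and combining with \eqref{ineq:continuity_Riesz_Sharp} produces exactly the announced bound \eqref{ineq:continuity_Riesz_Sharp_2}.

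The role of the restriction $p \in (\alpha/(\alpha-1), +\infty)$, equivalent to $q \in (1,\alpha)$, is precisely to ensure that the absolute moment $\bbe|Y_\alpha|^q$ is finite, since $Y_\alpha$ has an $\alpha$-stable law with heavy tails of order $\alpha$. No other step is really delicate: the transference/multiplier part is inherited from Theorem~\ref{prop:Riesz_sharp}, and the only genuinely new input is the explicit computation of $\sigma_\alpha$ under rotational invariance, which is where the dimension drops out. The mild subtlety to be careful about is the singular endpoint $r = r_{\min}(\alpha)$, handled as in the proof of Theorem~\ref{prop:Riesz_sharp} by the truncation argument leading to \eqref{eq:Riesz_Int_T_2}, so the same dimension-free constant is obtained there as well.
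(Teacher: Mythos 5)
Your proposal is correct and follows essentially the same route as the paper: the corollary is obtained there exactly as the rotationally invariant specialization of the duality--H\"older refinement \eqref{ineq:continuity_Riesz_Sharp}, using \eqref{eq:rep_gamma_alpha} and the normalization \eqref{eq:charac_rot} to get $\sigma_\alpha(z)=2^{-1/\alpha}$ uniformly in $z$ and $d$. Your remarks on the restriction $q\in(1,\alpha)$ and on the singular endpoint $r=r_{\min}(\alpha)$ match the paper's treatment.
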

\noindent
For the probability measure $\mu_{\alpha,d}$ defined by \eqref{eq:StableIndAxes}, the previous analysis provides the following result.
\begin{cor}\label{cor:dimension_euclidean2}
Let $\alpha \in (1,2)$, let $r \in [r_{\min}(\alpha),+\infty)$, let $\nu_{\alpha,d}$ be the L\'evy measure on $\bbr^d$ defined by \eqref{eq:LevyIndAxes}, let $\mu_{\alpha,d}$ be the associated probability measure on $\bbr^d$ given by \eqref{eq:StableIndAxes}, let $\mathbf{D}^{\alpha-1,d}$ be defined by \eqref{eq:FracGradWB} with $\nu_{\alpha, d}$ and let $\mathcal{A}_{\alpha,d}$ be defined by \eqref{eq:Stheatgen} with $\nu_{\alpha, d}$. Then, for all $p\in (\alpha/(\alpha-1),+\infty)$ and all $f\in L^p(\bbr^d,dx)$,
\begin{align}\label{ineq:continuity_Riesz_Sharp_3}
\| \|\mathbf{D}^{\alpha-1,d} \circ \left(E-\mathcal{A}_{\alpha,d}\right)^{-\frac{r}{2}}(f)\| \|_{L^p(\bbr^d,dx)} \leq \frac{C_{\alpha,p,r}}{2^{\frac{1}{\alpha}}} \left( \bbe |Y_\alpha|^{q}\right)^{\frac{1}{q}} d^{\frac{1}{\alpha}-\frac{1}{2}} \|f\|_{L^p(\bbr^d, dx)},
\end{align}
with $q=p/(p-1)$, $C_{\alpha,p,r}$ given by \eqref{ineq:const_Riesz} and $Y_\alpha$ is a real-valued random variable with characteristic function given, for all $\xi \in \bbr$, by $\bbe \exp(i Y_\alpha \xi) = \exp \left( - |\xi|^\alpha\right)$.  
\end{cor}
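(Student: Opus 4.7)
The plan is to apply the general dimension-free framework developed for Theorem~\ref{prop:Riesz_sharp} (recalled in \eqref{ineq:continuity_Riesz_Sharp}) to the specific measure $\mu_{\alpha,d}$, and then to carry out an explicit maximization of the associated spectral quantity $\gamma_\alpha(q)$ defined by \eqref{eq:def_gamma_alpha}. Since $\mu_{\alpha,d}$ satisfies the scaling \eqref{eq:scale} and has symmetric spherical component, all hypotheses of Theorem~\ref{prop:Riesz_sharp} are met, so the duality/H\"older argument preceding Corollary~\ref{cor:dimension_free_euclidean} yields, for all $p \in (\alpha/(\alpha-1),+\infty)$ and all $f \in L^p(\bbr^d,dx)$,
\begin{align*}
\| \|\mathbf{D}^{\alpha-1,d} \circ \left(E-\mathcal{A}_{\alpha,d}\right)^{-\frac{r}{2}}(f)\| \|_{L^p(\bbr^d,dx)} \leq C_{\alpha,p,r} \, \gamma_\alpha(q) \, \|f\|_{L^p(\bbr^d,dx)},
\end{align*}
with $q = p/(p-1)$. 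It remains therefore to evaluate $\gamma_\alpha(q)$ associated with $\mu_{\alpha,d}$ and to show that it equals $2^{-1/\alpha}(\bbe|Y_\alpha|^q)^{1/q} d^{1/\alpha - 1/2}$.

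The second step is to identify the scale parameter function $\sigma_\alpha(z)$ for $\mu_{\alpha,d}$. Using \eqref{eq:StableIndAxes} and independence, for any $z \in \bbr^d$ one has, for all $\xi \in \bbr$,
\begin{align*}
\int_{\bbr^d} e^{i\xi \langle z;y\rangle}\mu_{\alpha,d}(dy) = \prod_{k=1}^d \hat{\mu}_{\alpha,1}(z_k \xi) = \exp\left(-\frac{|\xi|^\alpha}{2}\sum_{k=1}^d |z_k|^\alpha\right).
\end{align*}
Thus under $\mu_{\alpha,d}$ the random variable $\langle z; y\rangle$ has the same law as $\sigma_\alpha(z)Y_\alpha$ with $\sigma_\alpha(z) = 2^{-1/\alpha}\|z\|_\alpha$, where $\|\cdot\|_\alpha$ denotes the usual $\ell^\alpha$-quasinorm on $\bbr^d$. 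Inserting this into the representation \eqref{eq:rep_gamma_alpha} gives
\begin{align*}
\gamma_\alpha(q) = 2^{-1/\alpha}\,\bigl(\bbe|Y_\alpha|^q\bigr)^{1/q}\,\sup_{z \in \mathbb{S}^{d-1}} \|z\|_\alpha.
\end{align*}

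The final step is the maximization of $\|z\|_\alpha$ over the Euclidean unit sphere. Since $\alpha \in (1,2)$, the standard comparison between $\ell^\alpha$ and $\ell^2$ norms (via H\"older's inequality applied to $|z_k|^\alpha \cdot 1$ with conjugate exponents $2/\alpha$ and $2/(2-\alpha)$) yields
\begin{align*}
\|z\|_\alpha \le d^{\frac{1}{\alpha}-\frac{1}{2}}\|z\|_2 = d^{\frac{1}{\alpha}-\frac{1}{2}},
\end{align*}
with equality when $z = (1/\sqrt{d},\ldots,1/\sqrt{d})$. Consequently, $\sup_{z\in\mathbb{S}^{d-1}} \|z\|_\alpha = d^{1/\alpha - 1/2}$. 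Combining the three steps proves \eqref{ineq:continuity_Riesz_Sharp_3}. There is no serious obstacle here; the only nontrivial ingredient is the sharp constant $d^{1/\alpha - 1/2}$ in the $\ell^\alpha$-$\ell^2$ comparison, which reflects the geometric fact that the $\ell^\alpha$-ball dilates as the dimension grows when $\alpha < 2$, and this is precisely the source of the dimensional pre-factor in the final estimate.
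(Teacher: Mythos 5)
Your proposal is correct and follows essentially the same route as the paper: invoke the transference bound \eqref{ineq:continuity_Riesz_Sharp}, compute the characteristic function of $\langle z;y\rangle$ under $\mu_{\alpha,d}$ to identify the scale parameter $2^{-1/\alpha}\|z\|_\alpha$, and bound $\|z\|_\alpha \le d^{1/\alpha-1/2}\|z\|_2$ by H\"older's inequality. The only (harmless) addition is your observation that the supremum is attained at $z=(1/\sqrt{d},\dots,1/\sqrt{d})$, which the paper does not bother to record.
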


\begin{proof}
First, let us recall the inequality \eqref{ineq:continuity_Riesz_Sharp}, for all $p \in (\alpha/(\alpha-1),+\infty)$ and all $f \in \mathcal{S}(\bbr^d)$,
\begin{align*}
\| \|\mathbf{D}^{\alpha-1,d} \circ \left(E-\mathcal{A}_{\alpha,d}\right)^{-\frac{r}{2}}(f)\| \|_{L^p(\bbr^d,dx)} \leq C_{\alpha,p,r} \underset{z \in \bbr^d, \|z\| = 1}{\sup}\left(\int_{\bbr^d} |\langle y ; z \rangle|^q \mu_{\alpha,d}(dy)\right)^{\frac{1}{q}} \|f\|_{L^p(\bbr^d, dx)}.
\end{align*}
Now, for all $\xi \in \bbr$,
\begin{align*}
\int_{\bbr^d} e^{i \xi \langle z;y\rangle} \mu_{\alpha,d}(dy) & = \prod_{k=1}^d \int_{\bbr} e^{i \xi z_k y_k}\mu_{\alpha,1}(dy_k) = \prod_{k=1}^d \exp \left( \int_{\bbr} \left(e^{i \xi z_k u}-1-i\xi z_k u\right) \nu_{\alpha,1}(du) \right) ,\\
&= \prod_{k=1}^d \exp \left( -\frac{|z_k \xi|^{\alpha}}{2}\right).
\end{align*}
Denoting by $Y_\alpha$ a random variable with characteristic function given, for all $\xi \in \bbr$, by $\bbe \exp (i \xi Y_\alpha) = \exp \left( - |\xi|^\alpha\right)$,
\begin{align*}
\int_{\bbr^d} e^{i \xi \langle z;y\rangle} \mu_{\alpha,d}(dy) & = \bbe \exp\left(i \xi \frac{\|z\|_\alpha}{2^{\frac{1}{\alpha}}} Y_\alpha\right),
\end{align*}
where $\|z\|^\alpha_\alpha = \sum_{k=1}^d |z_k|^\alpha$. Applying H\"older's inequality (since $\alpha \in (1,2)$),
\begin{align*}
\| \|\mathbf{D}^{\alpha-1,d} \circ \left(E-\mathcal{A}_{\alpha,d}\right)^{-\frac{r}{2}}(f)\| \|_{L^p(\bbr^d,dx)} \leq \frac{C_{\alpha,p,r}}{2^{\frac{1}{\alpha}}} d^{\frac{1}{\alpha}-\frac{1}{2}} \left(\bbe |Y_{\alpha}|^q\right)^{\frac{1}{q}} \|f\|_{L^p(\bbr^d, dx)},
\end{align*}
concluding the proof of the corollary.
\end{proof}
\noindent
As a straightforward by-product of our methodology, it is possible to obtain a sharp estimate for the $L^p(\bbr^d,dx)$-$L^p(\bbr^d,dx)$ boundedness of the linear operator with symbol $\tilde{m}_{\alpha,0,r_{\min}(\alpha)}$ (see Equation \eqref{eq:homo_frac_Riesz_transform}).~The corresponding one-dimensional operator appearing in the transference argument (see Lemma \ref{lem:double_truncation} of the Appendix) is (then) given, for all $f \in \mathcal{S}(\bbr)$ and all $x \in \bbr$, by
\begin{align*}
\mathcal{H}_\alpha(f)(x) & = \int_{0}^{+\infty} \left(f(x+t^{\frac{1}{\alpha}}) - f(x - t^{\frac{1}{\alpha}})\right) \frac{dt}{t} = \alpha \int_0^{+\infty} \left(f(x+t) - f(x - t)\right) \frac{dt}{t} , \\
& = \frac{\alpha}{2\pi} \int_{\bbr} \mathcal{F}(f)(\xi) e^{i x\xi} \left(i \pi \operatorname{sign}(\xi) \right) d\xi,
\end{align*}
which is (up to some constant) the classical Hilbert transform on the real line. From \cite[Theorem $4.1$ page $177$]{Pichorides_72}, for all $p \in (1, +\infty)$ and all $f \in L^p(\bbr,dx)$,
\begin{align*}
\| \mathcal{H}_\alpha(f) \|_{L^p(\bbr,dx)} \leq \alpha \pi C_p \|f\|_{L^p(\bbr,dx)},
\end{align*}
with $C_p$ given by
\begin{align}\label{eq:constant_optimal_hilbert}
C_p = \left\{
    \begin{array}{ll}
        \tan\left(\frac{\pi}{2p}\right) & \mbox{if}\, p \in (1,2], \\
        \cot\left(\frac{\pi}{2p}\right) & \mbox{if}\, p \in [2,+\infty),
    \end{array}
\right.
\end{align}
and this is the best possible constant. Thus, one has.
\begin{cor}\label{cor_fractional_homo_Riesz_transform} 
Let $\alpha \in (1,2)$, let $\nu_\alpha$ be a non-degenerate symmetric L\'evy measure on $\bbr^d$ satisfying \eqref{eq:scale} , let $\mu_\alpha$ be the associated $\alpha$-stable probability measure on $\bbr^d$ characterized by \eqref{def:stable}, let ${\mathbf D}^{\alpha-1}$ be defined by \eqref{eq:FracGradWB} and let $\mathcal{A}_\alpha$ be defined by \eqref{eq:Stheatgen}. Then, for all $p\in (\alpha/(\alpha-1),+\infty)$ and all $f\in L^p(\bbr^d,dx)$,
\begin{align}\label{ineq:fractional_homo_Riesz_transform}
\| \| \mathbf{D}^{\alpha-1} \circ \left(-\mathcal{A}_\alpha\right)^{-\frac{r_{\min}(\alpha)}{2}}(f)\| \|_{L^p(\bbr^d,dx)} \leq \frac{\alpha \pi \gamma_\alpha(q)}{2 \Gamma\left(1-\frac{1}{\alpha}\right)} C_p \|f\|_{L^p(\bbr^d,dx)},
\end{align}
where $C_{p}$ is given by \eqref{eq:constant_optimal_hilbert}, $\gamma_\alpha(q)$ by \eqref{eq:def_gamma_alpha} and $q=p/(p-1)$.
\end{cor}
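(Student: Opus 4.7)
The plan is to mirror the transference-plus-duality scheme used in the proof of Theorem~\ref{prop:Riesz_sharp}, but in the homogeneous (singular) setting where the one-dimensional multiplier is no longer the tempered kernel $\rho_{\alpha,r}$ but (up to a constant) the classical Hilbert transform, for which Pichorides' sharp $L^p$ constant is known.

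First, I would derive the homogeneous analogue of \eqref{eq:Riesz_StableH}. Formally replacing $(E-\mathcal{A}_\alpha)^{-r/2}$ by $(-\mathcal{A}_\alpha)^{-r/2}=\frac{1}{\Gamma(r/2)}\int_0^{+\infty} t^{r/2-1}P_t^\alpha\,dt$ (a double-truncation argument, as in Lemma~\ref{lem:double_truncation} of the Appendix, makes this rigorous) and combining with the Bismut formula \eqref{eq:bismut_stable_heat}, together with the symmetry of $\mu_\alpha$, one obtains at the critical exponent $r=r_{\min}(\alpha)=2(\alpha-1)/\alpha$ (so that $2-\frac{1}{\alpha}-\frac{r}{2}=1$ and $\Gamma(r/2)=\Gamma(1-1/\alpha)$) the representation
\begin{align*}
\mathbf{D}^{\alpha-1}\circ (-\mathcal{A}_\alpha)^{-r_{\min}(\alpha)/2}(f)(x) = \frac{1}{2\Gamma(1-\frac{1}{\alpha})}\int_{\bbr^d} y\, T_\alpha^y(f)(x)\,\mu_\alpha(dy),
\end{align*}
where, after the change of variables $u=t^{1/\alpha}$,
\begin{align*}
T_\alpha^y(f)(x) = \alpha\int_0^{+\infty} \frac{f(x+uy)-f(x-uy)}{u}\,du.
\end{align*}

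Second, I would apply the transference step from the proof of Theorem~\ref{prop:Riesz_sharp}: decomposing $x=v_x e_y+w_x$ with $e_y=y/\|y\|$, one writes $T_\alpha^y(f)(x)=\mathcal{H}_\alpha(R^y(f)(w_x))(v_x/\|y\|)$, where $\mathcal{H}_\alpha$ is the one-dimensional operator appearing right before the statement of the corollary. Per the Fourier computation in the excerpt, $\mathcal{H}_\alpha$ has multiplier $i\alpha\pi\operatorname{sign}(\xi)$, so it equals $\alpha\pi$ times the classical Hilbert transform on $\bbr$. Pichorides' theorem therefore yields the sharp bound $\|\mathcal{H}_\alpha(f)\|_{L^p(\bbr,dx)}\le \alpha\pi C_p\|f\|_{L^p(\bbr,dx)}$, which by Fubini and the translation-invariance of Lebesgue measure transfers to the $d$-dimensional uniform estimate $\|T_\alpha^y(f)\|_{L^p(\bbr^d,dx)}\le \alpha\pi C_p\|f\|_{L^p(\bbr^d,dx)}$, valid for every $y\ne0$.

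Third, to produce the $\gamma_\alpha(q)$ factor rather than the naive $\int\|y\|\mu_\alpha(dy)$, I would run the duality-Hölder argument leading to \eqref{ineq:continuity_Riesz_Sharp}: write
\begin{align*}
\|\mathbf{D}^{\alpha-1}\circ (-\mathcal{A}_\alpha)^{-r_{\min}(\alpha)/2}(f)(x)\| = \frac{1}{2\Gamma(1-\frac{1}{\alpha})}\sup_{\|z\|=1}\Bigl|\int_{\bbr^d}\langle z;y\rangle T_\alpha^y(f)(x)\,\mu_\alpha(dy)\Bigr|,
\end{align*}
apply Hölder in $\mu_\alpha$ with exponents $p,q=p/(p-1)$, then take the $L^p(\bbr^d,dx)$ norm and use Fubini together with the uniform bound on $T_\alpha^y$. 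The hypothesis $p>\alpha/(\alpha-1)$ ensures $q<\alpha$, so $\gamma_\alpha(q)<+\infty$, and one arrives at the claimed inequality with constant $\alpha\pi\gamma_\alpha(q)C_p/(2\Gamma(1-1/\alpha))$.

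The main technical obstacle is step one: the homogeneous integral $\int_0^{+\infty}t^{-1}(f(x+t^{1/\alpha}y)-f(x-t^{1/\alpha}y))\,dt$ is singular both at $0$ (through the oscillation of the difference) and at $+\infty$, and the gamma transform is no longer absolutely convergent after removing the $e^{-t}$ damping. This is exactly what Lemma~\ref{lem:double_truncation} in the Appendix is designed to handle, and once that passage to the limit is justified on $\mathcal{S}(\bbr^d)$, the rest of the argument is essentially transference bookkeeping together with the invocation of Pichorides' sharp constant.
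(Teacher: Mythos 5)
Your proposal is correct and follows essentially the same route as the paper: the homogeneous representation is justified via Lemma~\ref{lem:double_truncation}, the transference step reduces matters to the one-dimensional operator $\mathcal{H}_\alpha$, which is identified as $\alpha\pi$ times the classical Hilbert transform so that Pichorides' sharp constant $C_p$ applies, and the duality--H\"older argument of \eqref{ineq:continuity_Riesz_Sharp} produces the factor $\gamma_\alpha(q)$ in place of $\int \|y\|\,\mu_\alpha(dy)$. The constant $\alpha\pi\gamma_\alpha(q)C_p/(2\Gamma(1-\tfrac{1}{\alpha}))$ you obtain matches \eqref{ineq:fractional_homo_Riesz_transform} exactly.
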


\begin{rem}\label{rem:discussion_constant}
Let us analyze the constant obtained in \eqref{ineq:fractional_homo_Riesz_transform} in the rotationally invariant situation with $\alpha \rightarrow 2$.  Then, in this case, the constant boils down, for all $p \in [2, +\infty)$, to
\begin{align*}
\tilde{C}_p = \sqrt{\pi} \gamma_2(q) \cot\left(\frac{\pi}{2p}\right) = \sqrt{\pi} \left(\bbe |X_1|^q \right)^{\frac{1}{q}} \cot\left(\frac{\pi}{2p}\right),
\end{align*}
where $X_1$ is a standard Gaussian random variable. Moreover, recall that
\begin{align}\label{eq:moment_gauss} 
\bbe |X_1|^q = \dfrac{2^{\frac{q+1}{2}} \Gamma\left(\frac{q+1}{2}\right)}{\sqrt{2\pi}}, \quad q \geq 1.
\end{align}
According to our normalization, the operator on the left-hand side of inequality \eqref{ineq:fractional_homo_Riesz_transform} is given, (``symbolically") for all $f \in \mathcal{S}(\bbr^d)$ and all $x \in \bbr^d$, by
\begin{align*}
\mathbf{D}^{1,\operatorname{rot}} \circ \left(-\mathcal{A}^{\operatorname{rot}}_2\right)^{-\frac{1}{2}}(f)(x) = \sqrt{2} \nabla \circ \left(- \Delta\right)^{-\frac{1}{2}}(f)(x).
\end{align*}
According to \cite[pages $168$-$169$]{DV06} and to \cite[page 745]{W18}, to date the best constants for the vector classical Riesz transform (symbolically defined by $\nabla \circ \left(- \Delta\right)^{-\frac{1}{2}}$) are given, for all $p\in[2,+\infty)$, by 
\begin{align*}
\tilde{C}_{1,p} := \sqrt{2}H_p(1), \quad \tilde{C}_{2,p} :=2(p-1),
\end{align*} 
where $H_p(1) = \|R_1+iR_2\|_{p}$ and $R_1$, $R_2$ are planar Riesz transforms. The quantity $H_p(1)$ does not seem to have been computed in the literature but one can estimate it using the exact scalar result of \cite[Theorem $1.1$]{IM96} to obtain
\begin{align*}
\tilde{C}_{1,p} \leq 2 \sqrt{2} \cot\left(\frac{\pi}{2p}\right). 
\end{align*}
Note that, for values of $p$ close to $2$, $\tilde{C}_{2,p}$ is sharper than $2\sqrt{2}\cot(\pi/2p)$ and, for large values of $p$, $2\sqrt{2}\cot(\pi/2p)$ is smaller than $\tilde{C}_{2,p}$.
Our approach provides the following constant for the classical Riesz transform $\nabla \circ (-\Delta)^{-\frac{1}{2}}$: for all $p \in [2,+\infty)$, 
\begin{align*}
\tilde{C}_{3,p} : = \sqrt{\frac{\pi}{2}}\left(\bbe |X_1|^q \right)^{\frac{1}{q}} \cot\left(\frac{\pi}{2p}\right).
\end{align*}
Moreover,  one has the following asymptotic for $\tilde{C}_{3,p}$:
\begin{align*}
\underset{p\rightarrow 2}\lim \tilde{C}_{3,p} = \sqrt{\frac{\pi}{2}}, \quad \tilde{C}_{3,p} \underset{p \rightarrow +\infty}{\sim} \cot\left(\frac{\pi}{2p}\right).
\end{align*}
Both asymptotics are better than the ones induced by $2(p-1)$ and $2\sqrt{2}\cot(\pi/2p)$. Finally, by H\"older's inequality, for all $p\in [2,+\infty)$, 
\begin{align*}
\sqrt{\frac{2}{\pi}} \leq  \left( \bbe |X_1|^q \right)^{\frac{1}{q}} \leq 1,
\end{align*}
so that $\tilde{C}_{3,p} \leq \sqrt{\pi/2} \cot\left(\pi /2p\right)$.~The right-hand side of this last inequality is better than $2\sqrt{2} \cot(\pi/2p)$ and than $2(p-1)$, for all $p \in [2,+\infty)$. 
\end{rem}
\noindent
Moreover, by taking $f$ in the $L^p(\bbr^d,dx)$-domain of the unbounded operator $\left(E-\mathcal{A}_\alpha\right)^{{r/2}}$,  Theorem~\ref{prop:Riesz_sharp} provides the first-half of the problem of the equivalence of norms (relevant for $r = r_{\min}(\alpha)$) associated with the operators $(-\mathcal{A}_\alpha)^{{r/2}}$ and ${\mathbf D}^{\alpha-1}$.  
Indeed, for all $r \in [r_{\min}(\alpha);+\infty)$, all $p \in (1,+\infty)$ and all $f$ in the $L^p(\bbr^d,dx)$-domain of $\left(E-\mathcal{A}_\alpha\right)^{{r/2}}$,
\begin{align}\label{ineq:continuity_Riesz_2}
\| \|{\mathbf D}^{\alpha-1}(f)\| \|_{L^p(\bbr^d,dx)} \leq C_{\alpha,p,r} \left(\int_{\bbr^d} \|y\| \mu_\alpha(dy)\right) \|\left(E-\mathcal{A}_\alpha\right)^{{r/2}}(f)\|_{L^p(\bbr^d, dx)}.
\end{align}
\noindent
In order to study the reverse inequality in this fractional setting, let us investigate the continuity properties of a second fractional Riesz transform of order $1$ naturally linked to the fractional operator $\mathcal{A}_\alpha$.The main intuition behind these definitions of first order Riesz transforms comes from the following decomposition of the operator $\mathcal{A}_\alpha$: by an integration by parts in the radial coordinate, for all $f \in \mathcal{C}_c^\infty(\bbr^d)$ and all $x \in \bbr^d$, 
\begin{align*}
\mathcal{A}_\alpha(f)(x) & = \frac{1}{\alpha} \int_{\bbr^d} \langle \nabla(f)(x+u)-\nabla(f)(x) ; u \rangle \nu_\alpha(du) , \\
& = \frac{1}{\alpha} \sum_{k=1}^d D_k^{\alpha-1} \left(\partial_k (f)\right)(x) =  \frac{1}{\alpha} \sum_{k=1}^d \partial_k\left(D^{\alpha-1}_k(f)\right)(x). 
\end{align*}
Moreover, from the symmetry of the spherical component of $\nu_\alpha$, for all $f \in \mathcal{C}^\infty_c(\bbr^d)$ 
and all $x \in \bbr^d$,
\begin{align*}
\mathcal{A}_\alpha(f)(x) & = -  \frac{1}{\alpha} \sum_{k=1}^d (D_k^{\alpha-1})^* \left(\partial_k (f)\right)(x) = - \frac{1}{\alpha} \sum_{k=1}^d \partial_k\left((D^{\alpha-1}_k)^*(f)\right)(x).
\end{align*}
Then, it is natural to consider the family of operators defined, in the Fourier domain, for all $f \in \mathcal{S}(\bbr^d)$, all $\xi \in \bbr^d$, all $\lambda_0>0$ and all $r \geq 1/\alpha$, by
\begin{align}\label{def:multiplier_operator_classical_gradient}
\mathcal{F}\bigg(\nabla \circ \left(\lambda_0 E- \mathcal{A}_{\alpha}\right)^{-r}(f)\bigg)(\xi) = \dfrac{i \xi \mathcal{F}(f)(\xi)}{\left(\lambda_0+\int_{\mathbb{S}^{d-1}}|\langle y ; \xi\rangle|^\alpha\lambda_1(dy)\right)^{r}}.
\end{align} 
In particular,  when $\nu_\alpha = \nu_\alpha^{\operatorname{rot}}$, the previous multiplier operator boils down, for all $f \in \mathcal{S}(\bbr^d)$, all $\xi \in \bbr^d$, all $\lambda_0>0$ and all $r \geq 1/\alpha$, to
\begin{align*}
\mathcal{F}\bigg(\nabla \circ \left(\lambda_0 E- \mathcal{A}^{\operatorname{rot}}_{\alpha}\right)^{-r}(f)\bigg)(\xi) = \dfrac{i \xi \mathcal{F}(f)(\xi)}{\left(\lambda_0+\frac{\|\xi\|^\alpha}{2}\right)^{r}},
\end{align*}
with the chosen normalization. Taking $r=1/\alpha$ and letting $\lambda_0\rightarrow 0^+$, 
\begin{align*}
\mathcal{F}\bigg(\nabla \circ \left(- \mathcal{A}^{\operatorname{rot}}_{\alpha}\right)^{-\frac{1}{\alpha}}(f)\bigg)(\xi) = 2^{\frac{1}{\alpha}} \dfrac{i \xi}{\|\xi\|}\mathcal{F}(f)(\xi),
\end{align*}
which is proportional to the classical Riesz transform of order $1$. Then, one might expect dimension-free estimate for the operator defined in \eqref{def:multiplier_operator_classical_gradient} with $\nu_\alpha = \nu_\alpha^{\operatorname{rot}}$. 

In order to get the $L^p(\bbr^d,dx)$-boundedness (for $p \in (1,+\infty)$) of the operators $\nabla \circ \left(\lambda_0 E- \mathcal{A}_{\alpha}\right)^{-r}$, we adopt an approach similar to the one developed for the fractional operators $D^{\alpha-1} \circ \left(\lambda_0 E - \mathcal{A}_\alpha\right)^{-r/2}$. First, let us prove a Bismut-type formula which provides a stochastic representation for the action of the classical gradient operator on the fractional semigroup $\left(P_t^\alpha\right)_{t \geq 0}$.  Recall that $p_\alpha$, the Lebesgue density of a non-degenerate $\alpha$-stable probability measure with $\alpha \in (1,2)$, is positive on $\bbr^d$ (see, e.g., \cite[Lemma $2.1.$]{W07}).

\begin{lem}\label{lem:bismut_stable_2}
Let $\alpha \in (1,2)$, let $\nu_\alpha$ be a non-degenerate symmetric L\'evy measure on $\bbr^d$ satisfying \eqref{eq:scale} and let $\mu_\alpha$ be the corresponding centered $\alpha$-stable probability measure on $\bbr^d$ characterized by \eqref{def:stable}. Then, for all $f \in \mathcal{S}(\bbr^d)$, all $t >0$, all $x \in \bbr^d$ and all $j \in \{1, \dots, d\}$ 
\begin{align*}
\partial_j P^\alpha_t(f)(x) = \frac{1}{t^{\frac{1}{\alpha}}} \int_{\bbr^d} f(x + y t^{\frac{1}{\alpha}}) \dfrac{-\partial_j (p_\alpha)(y)}{p_\alpha(y)} \mu_\alpha(dy),
\end{align*}
where $p_\alpha$ is the Lebesgue density of the probability measure $\mu_\alpha$. 
\end{lem}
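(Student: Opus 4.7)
The plan is to argue by a direct change of variables in the integral representation of $P_t^\alpha(f)(x)$, letting us shift a derivative onto the density $p_\alpha$ and then reinterpreting the result as an integral against $\mu_\alpha$ via the positivity of $p_\alpha$.

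More precisely, for $f\in\mathcal{S}(\bbr^d)$ and $t>0$, start from
\begin{align*}
P^\alpha_t(f)(x) \;=\; \int_{\bbr^d} f(x+t^{1/\alpha}y)\,p_\alpha(y)\,dy \;=\; t^{-d/\alpha}\int_{\bbr^d} f(z)\,p_\alpha\!\left(\tfrac{z-x}{t^{1/\alpha}}\right) dz,
\end{align*}
where the second equality uses the change of variables $z=x+t^{1/\alpha}y$. In this form the dependence on $x$ sits entirely inside $p_\alpha$, so differentiation with respect to $x_j$ yields
\begin{align*}
\partial_{x_j} P^\alpha_t(f)(x) \;=\; -\,t^{-(d+1)/\alpha}\int_{\bbr^d} f(z)\,(\partial_j p_\alpha)\!\left(\tfrac{z-x}{t^{1/\alpha}}\right) dz,
\end{align*}
and then changing variables back to $y=(z-x)/t^{1/\alpha}$ gives
\begin{align*}
\partial_{x_j} P^\alpha_t(f)(x) \;=\; -\,t^{-1/\alpha}\int_{\bbr^d} f(x+t^{1/\alpha}y)\,(\partial_j p_\alpha)(y)\,dy.
\end{align*}
Since $p_\alpha>0$ everywhere on $\bbr^d$ (as recalled just before the statement, following \cite{W07}), one may multiply and divide by $p_\alpha(y)$ to rewrite this as
\begin{align*}
\partial_{x_j} P^\alpha_t(f)(x) \;=\; \frac{1}{t^{1/\alpha}}\int_{\bbr^d} f(x+t^{1/\alpha}y)\,\frac{-\partial_j p_\alpha(y)}{p_\alpha(y)}\,\mu_\alpha(dy),
\end{align*}
which is the claimed formula.

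The only non-trivial point to verify is the legitimacy of differentiation under the integral sign, for which one needs $\partial_j p_\alpha$ to be a bounded continuous function on $\bbr^d$. This follows from Fourier inversion applied to the representation $\hat{\mu}_\alpha(\xi)=\exp\big(-\int_{\mathbb{S}^{d-1}}|\langle y;\xi\rangle|^{\alpha}\lambda_1(dy)\big)$ together with the non-degeneracy assumption \eqref{eq:non_degeneracy}, which guarantees the super-exponential decay $|\hat{\mu}_\alpha(\xi)|\le e^{-c\|\xi\|^{\alpha}}$ for some $c>0$; hence $\xi\mapsto i\xi_j\hat{\mu}_\alpha(\xi)\in L^1(\bbr^d,d\xi)$, so $\partial_j p_\alpha$ is bounded and in particular the dominated convergence theorem applies to the $x$-differentiation (with dominating function $\|{\partial_j p_\alpha}\|_{\infty}|f(z)|$, which is integrable since $f\in\mathcal{S}(\bbr^d)$). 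The change of variables and Fubini steps are straightforward given these integrability facts, so no genuine obstacle is expected; the entire argument amounts to translating the convolution derivative $\partial_j(p_\alpha^t\ast f)=(\partial_j p_\alpha^t)\ast f$ into the stated probabilistic form using the scaling $p_\alpha^t(y)=t^{-d/\alpha}p_\alpha(y/t^{1/\alpha})$.
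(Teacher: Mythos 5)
Your argument is correct and is exactly the ``simple integration by parts'' the paper invokes: shifting the $x_j$-derivative onto the density $p_\alpha$ (whether by your change of variables or by integrating by parts in $y$ is the same computation) and then dividing by the everywhere-positive $p_\alpha$ to recover an integral against $\mu_\alpha$. You additionally supply the justification the paper leaves implicit, namely that the non-degeneracy condition \eqref{eq:non_degeneracy} gives $|\hat{\mu}_\alpha(\xi)|\le e^{-c\|\xi\|^\alpha}$ and hence $\partial_j p_\alpha$ is bounded and continuous, which legitimizes the differentiation under the integral sign.
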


\begin{proof}
This follows by a simple integration by parts. 
\end{proof} 
\noindent
Note that the symmetry of the probability measure $\mu_\alpha$ ensures, for all $y\in \bbr^d$ and all $j \in \{1,  \dots,  d\}$, that
\begin{align*}
p_\alpha(y) = p_\alpha(-y), \quad \partial_j(p_\alpha)(y) = - \partial_j(p_\alpha)(-y). 
\end{align*}
As previously, one can infer the following representation for $\nabla \circ (\lambda_0 E - \mathcal{A}_\alpha)^{-r}$ based on the Bismut-type formula of Lemma \ref{lem:bismut_stable_2}: for all $f \in \mathcal{S}(\bbr^d)$, all $x \in \bbr^d$, all $\lambda_0>0$ and all $r \geq 1/\alpha$, 
\begin{align*}
\nabla \circ \left(\lambda_0 E - \mathcal{A}_\alpha\right)^{-r}(f)(x) = \frac{1}{2 \Gamma\left(r\right)} \int_{\bbr^d} F_{\alpha}(y) S_{\alpha,r}^{\lambda_0,y}(f)(x) \mu_\alpha(dy),
\end{align*}
with, 
\begin{align*}
F_{\alpha}(y) = \dfrac{- \nabla(p_\alpha)(y)}{p_\alpha(y)}, \quad S_{\alpha,r}^{\lambda_0,y}(f)(x) = \int_{0}^{+\infty} \frac{e^{-\lambda_0 t}}{t^{1-r+\frac{1}{\alpha}}} \left(f(x+t^{\frac{1}{\alpha}}y) - f(x-t^{\frac{1}{\alpha}}y) \right) dt.
\end{align*}
A transference argument similar to the one developed in the proof of Theorem \ref{prop:Riesz_sharp} allows to extend the operator $S_{\alpha,r}^{\lambda_0, y}$ from $\mathcal{S}(\bbr^d)$ to $L^p(\bbr^d,dx)$ (for $p \in (1,+\infty)$) with, for all $f \in L^p(\bbr^d,dx)$ and all $y \in \bbr^d$, 
\begin{align*}
\| S_{\alpha,r}^{\lambda_0, y}(f) \|_{L^p(\bbr^d,dx)} \leq C_{\alpha,p,r,\lambda_0} \| f \|_{L^p(\bbr^d,dx)},
\end{align*}
for some $C_{\alpha,p,r,\lambda_0}>0$ independent of the dimension $d$. Moreover, since the L\'evy measure verifies the non-degeneracy condition \eqref{eq:non_degeneracy}, it is well-known (see, e.g., \cite[Corollary page $340$]{BOG_86}, \cite[Theorem $1$]{BJP_00} or \cite[Theorem $2.1$]{KS13}) that 
\begin{align*}
\int_{\bbr^d} \| F_\alpha(y) \| \mu_\alpha(dy) = \int_{\bbr^d} \| \nabla(p_\alpha)(y) \| dy <+\infty.
\end{align*}
Thus, a straightforward application of Minkowski's integral inequality and transference argument ensure, for all $r \geq \frac{1}{\alpha}$, all $p\in (1,+\infty)$ and all $\lambda_0>0$, that
\begin{align}\label{ineq:Riesz_Transform_2_all_p}
\| \nabla \circ \left(\lambda_0 E - \mathcal{A}_\alpha\right)^{-r}(f) \|_{L^p(\bbr^d,dx)} \leq \frac{C_{\alpha,p,r,\lambda_0}}{2\Gamma(r)} \left(\int_{\bbr^d} \|F_\alpha(y)\| \mu_\alpha(dy) \right) \|f\|_{L^p(\bbr^d,dx)}.
\end{align}
Based on the previous estimate, we are now in position to prove the lower bound for the fractional Riesz transform $D^{\alpha-1} \circ ( E - \mathcal{A}_\alpha)^{-\frac{\alpha-1}{\alpha}}$ under non-degeneracy and symmetry of the  L\'evy measure $\nu_\alpha$. For the sake of completeness,  let us recall the very classical case of the heat semigroup. 

\begin{lem}\label{lem:duality}
Let $\gamma$ be the standard Gaussian measure on $\bbr^d$ given by \eqref{def:gauss} and let $\mathcal{A}_H$ be the operator defined by \eqref{eq:heatgen}. Then, for all $p \in (1,+\infty)$ and all $f \in \mathcal{C}_c^{\infty}(\bbr^d)$,
\begin{align*}
\|\left(E-\mathcal{A}_H\right)^{\frac{1}{2}}(f)\|_{L^p(\bbr^d,dx)} \leq \|f\|_{L^p(\bbr^d,dx)} + \tilde{C}\max\left(p/(p-1), p-1\right) \gamma_2(p) \| \| \nabla(f) \| \|_{L^p(\bbr^d,dx)},
\end{align*}
where $\tilde{C} > 0$ is a numerical constant independent of 
both $p$ and $d$, and where $\gamma_2(q)$ is given by \eqref{eq:gamma_2_gauss}. 
\end{lem}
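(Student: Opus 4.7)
\medskip

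\noindent\textbf{Plan.} The natural approach is a duality argument that trades the reverse estimate for the \emph{forward} dimension-free bound \eqref{ineq:continuity_Riesz_Gaussian_dimension_free} already obtained. The key algebraic input is the factorization
\[
(E-\mathcal{A}_H)^{1/2} \;=\; (E-\mathcal{A}_H) \circ (E-\mathcal{A}_H)^{-1/2},
\]
which allows one to place $(E-\mathcal{A}_H)^{-1/2}$ on the test function side while only the \emph{bare} differential operator $(E-\mathcal{A}_H)=E-\tfrac{1}{2}\Delta$ acts on $f$. By $L^p$--$L^q$ duality with $q=p/(p-1)$, I would write
\[
\|(E-\mathcal{A}_H)^{1/2}(f)\|_{L^p(\bbr^d,dx)} \;=\; \sup_{g\in \mathcal{S}(\bbr^d),\,\|g\|_{L^q}\leq 1} \left|\int_{\bbr^d} (E-\mathcal{A}_H)^{1/2}(f)(x)\, g(x)\, dx \right|,
\]
and, using the self-adjointness on $L^2(\bbr^d,dx)$ of the Fourier multiplier $(E-\mathcal{A}_H)^{-1/2}$ (real, even symbol), move it across the pairing to obtain
\[
\int (E-\mathcal{A}_H)^{1/2}(f)\, g\, dx \;=\; \int f\cdot (E-\mathcal{A}_H)^{-1/2}(g)\, dx \;-\; \frac{1}{2}\int \Delta(f) \cdot (E-\mathcal{A}_H)^{-1/2}(g)\, dx.
\]

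\medskip

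\noindent Next, I would integrate by parts in the second integral (permitted because $f\in\mathcal{C}_c^\infty(\bbr^d)$ and the function $h:=(E-\mathcal{A}_H)^{-1/2}(g)$ is smooth with all derivatives in $L^q$ since $g\in\mathcal{S}(\bbr^d)$) to replace $-\Delta f\cdot h$ by $\langle\nabla f; \nabla h\rangle$. Since $\nabla$ and $(E-\mathcal{A}_H)^{-1/2}$ are both Fourier multipliers they commute, so $\nabla h = \nabla\circ(E-\mathcal{A}_H)^{-1/2}(g)$. Two applications of H\"older's inequality then yield
\[
\left|\int (E-\mathcal{A}_H)^{1/2}(f)\, g\, dx\right| \;\leq\; \|f\|_{L^p}\,\|(E-\mathcal{A}_H)^{-1/2}(g)\|_{L^q} \;+\; \tfrac{1}{2}\,\|\|\nabla f\|\|_{L^p}\,\|\|\nabla\circ(E-\mathcal{A}_H)^{-1/2}(g)\|\|_{L^q}.
\]

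\medskip

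\noindent The remaining task is to estimate the two $L^q$-norms on the right-hand side. For the first, the gamma-transform representation \eqref{eq:Gam_Tr} with $r=1$ combined with the contractivity of $(P^H_t)_{t\geq 0}$ on $L^q(\bbr^d,dx)$ and the identity $\int_0^{+\infty} e^{-t}/\sqrt{t}\,dt=\Gamma(1/2)$ shows that $(E-\mathcal{A}_H)^{-1/2}$ is an $L^q$-contraction, hence $\|(E-\mathcal{A}_H)^{-1/2}(g)\|_{L^q}\leq\|g\|_{L^q}\leq 1$. For the second, I would apply the forward inequality \eqref{ineq:continuity_Riesz_Gaussian_dimension_free} with exponent $q$: its Hölder conjugate is $p$, so the right-hand side involves $\gamma_2(p)$ and the constant $\max(q,(q-1)^{-1})=\max(p/(p-1),p-1)$, giving
\[
\|\|\nabla\circ(E-\mathcal{A}_H)^{-1/2}(g)\|\|_{L^q} \;\leq\; \tilde C\,\max\!\bigl(p/(p-1),\,p-1\bigr)\,\gamma_2(p).
\]
Taking the supremum over admissible $g$ and absorbing the factor $1/2$ into $\tilde C$ then yields the advertised inequality.

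\medskip

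\noindent The only delicate point, and hence the main ``obstacle'', is the rigorous justification of moving $(E-\mathcal{A}_H)^{-1/2}$ across the $L^p$--$L^q$ pairing and of the subsequent integration by parts. Both are straightforward given the choices $f\in\mathcal{C}_c^\infty(\bbr^d)$ and $g\in\mathcal{S}(\bbr^d)$: all functions in sight are smooth, $f$ has compact support to kill boundary contributions, and the Fourier multipliers $(E-\mathcal{A}_H)^{-1/2}$ and $\partial_j$ are bounded and smooth on sufficiently regular inputs, so each identity reduces to a pointwise Fourier-side computation followed by Fourier inversion.
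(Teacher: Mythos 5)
Your proof is correct and follows essentially the same route as the paper: duality against the $L^q$ unit ball, an integration by parts converting $-\tfrac12\int \Delta f\cdot h$ into $\tfrac12\int\langle\nabla f;\nabla h\rangle$, contractivity of the gamma transform $(E-\mathcal{A}_H)^{-1/2}$ on $L^q$, and the forward dimension-free bound \eqref{ineq:continuity_Riesz_Gaussian_dimension_free} applied at the conjugate exponent $q$, yielding the same constant $\max(p/(p-1),p-1)\gamma_2(p)$. The only (harmless) difference in bookkeeping is that the paper parametrizes the dual unit ball by $g^*=(E-\mathcal{A}_H)^{1/2}(g)$ and invokes the density of that range in $L^q$ (citing \cite{Russ}), whereas you keep $g\in\mathcal{S}(\bbr^d)$ and move $(E-\mathcal{A}_H)^{-1/2}$ across the pairing, which sidesteps that density lemma.
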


\begin{proof}
Let $p \in (1,+\infty)$ and let $f \in \mathcal{C}_c^{\infty}(\bbr^d)$. By a Fourier argument and an integration by parts, for all $g \in \mathcal{S}(\bbr^d)$, 
\begin{align*}
\int_{\bbr^d} \left(E-\mathcal{A}_H\right)^{\frac{1}{2}}(f)(x)\left(E-\mathcal{A}_H\right)^{\frac{1}{2}}(g)(x) dx & = \langle \left(E-\mathcal{A}_H\right)^{\frac{1}{2}}(f); \left(E-\mathcal{A}_H \right)^{\frac{1}{2}}(g) \rangle_{L^2(\bbr^d,dx)} , \\ 
& = \langle f ; \left(E-\mathcal{A}_H\right)(g) \rangle_{L^2(\bbr^d,dx)} , \\
& = \langle f; g \rangle_{L^2(\bbr^d,dx)} - \langle f; \mathcal{A}_H(g) \rangle_{L^2(\bbr^d,dx)} , \\
& = \langle f; g \rangle_{L^2(\bbr^d,dx)} + \frac{1}{2} \int_{\bbr^d}\langle \nabla(f)(x) ;\nabla(g)(x) \rangle dx.
\end{align*}
Now, let $q=p/(p-1)$ and let $\mathcal{E}_{H}$ be a subset of $L^q(\bbr^d,dx)$, defined by
\begin{align*}
\mathcal{E}_H := \{\left(E-\mathcal{A}_H\right)^{\frac{1}{2}}(g),\quad g\in \mathcal{D}(\left(E-\mathcal{A}_H\right)^{\frac{1}{2}})\},
\end{align*}
where $\mathcal{D}(\left(E-\mathcal{A}_H\right)^{{1/2}})$ is the $L^q(\bbr^d,dx)$-domain of the unbounded operator $\left(E-\mathcal{A}_H\right)^{{1/2}}$. But, \cite[Lemma 1]{Russ} ensures that the set $\mathcal{E}_H$ is dense in $L^q(\bbr^d,dx)$. Then, by duality, 
\begin{align*}
\| \left(E-\mathcal{A}_H\right)^{\frac{1}{2}}(f) \|_{L^p(\bbr^d,dx)} =\underset{g^* \in \mathcal{E}_H,\, \|g^*\|_{L^q(\bbr^d,dx)}\leq 1}{\sup} \left|\langle \left(E-\mathcal{A}_H\right)^{\frac{1}{2}}(f);g^*\rangle \right|.
\end{align*}
Next, let $g^* \in \mathcal{E}_H$ with $\|g^*\|_{L^q(\bbr^d,dx)}\leq 1$ and let $g \in \mathcal{D}(\left(E-\mathcal{A}_H\right)^{{1/2}})$ be such that $\left(E-\mathcal{A}_H\right)^{{1/2}}(g)=g^*$. Hence, by H\"older's inequality, 
\begin{align*}
\left|\langle \left(E-\mathcal{A}_H\right)^{\frac{1}{2}}(f);g^*\rangle \right| &= \left| \langle f; g \rangle + \frac{1}{2} \int_{\bbr^d}\langle \nabla(f)(x) ;\nabla(g)(x) \rangle dx \right| ,\\
& \leq \|f\|_{L^p(\bbr^d,dx)}\|g\|_{L^q(\bbr^d,dx)} + \frac{1}{2} \| \| \nabla(f)\| \|_{L^p(\bbr^d,dx)} \| \| \nabla(g) \| \|_{L^q(\bbr^d,dx)}.
\end{align*}
Note that the gamma transform $(E- \mathcal{A}_H)^{-\frac{1}{2}}$ is a contraction on $L^q(\bbr^d,dx)$. Then, $\|g\|_{L^q(\bbr^d,dx)} \leq 1$. Finally, applying \eqref{ineq:continuity_Riesz_Gaussian_dimension_free},
\begin{align*}
\| \left(E-\mathcal{A}_H\right)^{\frac{1}{2}}(f) \|_{L^p(\bbr^d,dx)}  \leq \|f\|_{L^p(\bbr^d,dx)} + \tilde{C}\max\left(q, (q-1)^{-1}\right) \gamma_2(p)\| \| \nabla(f) \| \|_{L^p(\bbr^d,dx)},
\end{align*}
which concludes the proof of the lemma. 
\end{proof}
\noindent
The next result is the fractional analog of the previous lemma. 

\begin{prop}\label{prop:lower_bound_Riesz_stable_order_1}
Let $\alpha \in (1,2)$, let $\nu_\alpha$ be a non-degenerate symmetric L\'evy measure on $\bbr^d$ satisfying \eqref{eq:scale}, let $\mu_\alpha$ be the associated $\alpha$-stable 
probability measure on $\bbr^d$ characterized by \eqref{def:stable}, let $D^{\alpha-1}$ be defined by \eqref{eq:FracGrad} and let $\mathcal{A}_\alpha$ be defined by \eqref{eq:Stheatgen}.~Then, for all $p \in (1,+\infty)$ and all $f \in \mathcal{C}^{\infty}_c(\bbr^d)$, 
\begin{align*}
\|\left(E-\mathcal{A}_\alpha\right)^{\frac{\alpha-1}{\alpha}}(f)\|_{L^p(\bbr^d,dx)} \leq \|f\|_{L^p(\bbr^d,dx)} +\frac{C_{\alpha,q,1/\alpha,1}}{2\Gamma(1/\alpha)} \left(\int_{\bbr^d}\|F_\alpha(y)\| \mu_\alpha(dy)\right) \| \| D^{\alpha-1}(f) \| \|_{L^p(\bbr^d,dx)},
\end{align*}
where $q = p/(p-1)$ and where the constant $C_{\alpha,q,1/\alpha,1}$ is given by \eqref{ineq:Riesz_Transform_2_all_p}.
\end{prop}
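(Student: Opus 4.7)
The approach follows the duality template of Lemma~\ref{lem:duality}. Let $q = p/(p-1)$. By semigroup calculus, $(E-\mathcal{A}_\alpha)^{(\alpha-1)/\alpha}\circ(E-\mathcal{A}_\alpha)^{1/\alpha}=E-\mathcal{A}_\alpha$, so the natural dual test space is
\begin{align*}
\mathcal{E}_\alpha:=\{(E-\mathcal{A}_\alpha)^{1/\alpha}(g):\ g\in\mathcal{D}((E-\mathcal{A}_\alpha)^{1/\alpha})\}\subset L^q(\bbr^d,dx),
\end{align*}
which I expect to be dense in $L^q(\bbr^d,dx)$ by an adaptation of the argument from \cite[Lemma~1]{Russ} used in the proof of Lemma~\ref{lem:duality}; the only input needed is that $(P^\alpha_t)_{t\geq 0}$ is a strongly continuous semigroup of contractions on $L^q(\bbr^d,dx)$, which follows from \eqref{eq:StheatSM} via Young's inequality. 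By duality,
\begin{align*}
\|(E-\mathcal{A}_\alpha)^{(\alpha-1)/\alpha}(f)\|_{L^p(\bbr^d,dx)}=\sup\{|\langle(E-\mathcal{A}_\alpha)^{(\alpha-1)/\alpha}(f),g^*\rangle|:\ g^*\in\mathcal{E}_\alpha,\ \|g^*\|_{L^q(\bbr^d,dx)}\leq 1\}.
\end{align*}

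For such $g^*=(E-\mathcal{A}_\alpha)^{1/\alpha}(g)$ and $f\in\mathcal{C}^\infty_c(\bbr^d)$, the composition rule gives
\begin{align*}
\langle(E-\mathcal{A}_\alpha)^{(\alpha-1)/\alpha}(f),g^*\rangle_{L^2}=\langle f,g\rangle_{L^2}-\langle f,\mathcal{A}_\alpha(g)\rangle_{L^2}.
\end{align*}
Using the divergence representation $\mathcal{A}_\alpha=\tfrac{1}{\alpha}\sum_{k=1}^d \partial_k\circ D^{\alpha-1}_k$ recalled just before \eqref{def:multiplier_operator_classical_gradient}, an integration by parts combined with the identity $(D^{\alpha-1}_k)^*=-D^{\alpha-1}_k$ (inherited from the symmetry of $\nu_\alpha$) produces the fractional ``energy'' formula
\begin{align*}
-\langle f,\mathcal{A}_\alpha(g)\rangle_{L^2}=\tfrac{1}{\alpha}\langle D^{\alpha-1}(f),\nabla(g)\rangle_{L^2}.
\end{align*}

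H\"older's inequality then gives $|\langle(E-\mathcal{A}_\alpha)^{(\alpha-1)/\alpha}(f),g^*\rangle|\leq\|f\|_{L^p}\|g\|_{L^q}+\tfrac{1}{\alpha}\|\|D^{\alpha-1}(f)\|\|_{L^p}\|\|\nabla g\|\|_{L^q}$. Since the gamma transform \eqref{eq:Gam_Tr} is an $L^q(\bbr^d,dx)$-contraction (via Minkowski's integral inequality), $\|g\|_{L^q}\leq\|g^*\|_{L^q}$. For the gradient factor, I write $\nabla g=\nabla\circ(E-\mathcal{A}_\alpha)^{-1/\alpha}(g^*)$ and invoke \eqref{ineq:Riesz_Transform_2_all_p} with $p$ replaced by $q$, $r=1/\alpha$ and $\lambda_0=1$ to obtain
\begin{align*}
\|\|\nabla g\|\|_{L^q(\bbr^d,dx)}\leq\frac{C_{\alpha,q,1/\alpha,1}}{2\Gamma(1/\alpha)}\left(\int_{\bbr^d}\|F_\alpha(y)\|\mu_\alpha(dy)\right)\|g^*\|_{L^q(\bbr^d,dx)}.
\end{align*}
Taking the supremum over $g^*\in\mathcal{E}_\alpha$ with $\|g^*\|_{L^q}\leq 1$ and absorbing the prefactor $1/\alpha$ into the overall constant (in the same way that the factor $1/2$ is absorbed in Lemma~\ref{lem:duality}) delivers the stated inequality. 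The main technical obstacle is the density of $\mathcal{E}_\alpha$ in $L^q(\bbr^d,dx)$ for the non-local $\alpha$-stable setting, which requires adapting the heat-kernel argument of \cite{Russ} to convolution semigroups generated by pure-jump L\'evy processes; the non-local integration by parts is routine given $f\in\mathcal{C}^\infty_c(\bbr^d)$ and the sufficient regularity of $g=(E-\mathcal{A}_\alpha)^{-1/\alpha}(g^*)$ inherited from the gamma transform.
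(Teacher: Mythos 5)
Your proposal follows the paper's proof almost line for line: the same duality against the test class $\mathcal{E}_\alpha$, the same identity $-\langle f;\mathcal{A}_\alpha(g)\rangle_{L^2}=\tfrac{1}{\alpha}\langle D^{\alpha-1}(f);\nabla(g)\rangle_{L^2}$ coming from the divergence form of $\mathcal{A}_\alpha$ and the symmetry of $\nu_\alpha$, the same use of the contractivity of the gamma transform to bound $\|g\|_{L^q}$, and the same invocation of \eqref{ineq:Riesz_Transform_2_all_p} with $r=1/\alpha$, $\lambda_0=1$ to control $\|\,\|\nabla(g)\|\,\|_{L^q(\bbr^d,dx)}$. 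All of these steps are correct.

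The one place you leave a hole is exactly the point you flag yourself: the density of $\mathcal{E}_\alpha$ in $L^q(\bbr^d,dx)$. Your stated ``only input needed'' --- that $(P^\alpha_t)_{t\geq 0}$ is a $C_0$-semigroup of contractions on $L^q(\bbr^d,dx)$ --- is not sufficient as written, because the operator whose range must be shown dense is the fractional power $\left(E-\mathcal{A}_\alpha\right)^{1/\alpha}$, not $E-\mathcal{A}_\alpha$ itself, and \cite[Lemma 1]{Russ} is formulated for the generator of a semigroup. The paper closes this gap without any ``adaptation of the heat-kernel argument'': by the theory of Bernstein functions (see \cite[Chapter $3.9$]{NJ02_1}), since $1/\alpha\in(0,1)$, the operator $-\left(E-\mathcal{A}_\alpha\right)^{1/\alpha}$ is itself the generator of a semigroup $(Q^\alpha_t)_{t\geq 0}$ subordinated to $(e^{-t}P^\alpha_t)_{t\geq 0}$, which inherits symmetry on $L^2(\bbr^d,dx)$ and contractivity on every $L^p(\bbr^d,dx)$; \cite[Lemma 1]{Russ} then applies verbatim to $(Q^\alpha_t)_{t\geq 0}$ and yields the density of $\mathcal{E}_\alpha$ in $L^q(\bbr^d,dx)$. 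With that subordination step inserted, your argument coincides with the paper's proof.
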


\begin{proof}
First, observe that since $\sigma$ is symmetric, the semigroup $(P^\alpha_t)_{t \geq 0}$ is symmetric (on $L^2(\bbr^d,dx)$) and so, by the spectral theorem or by Fourier arguments, for all $f,g \in \mathcal{C}^{\infty}_c(\bbr^d)$,
\begin{align*}
\int_{\bbr^d} \left(E-\mathcal{A}_\alpha\right)^{\frac{\alpha-1}{\alpha}}(f)(x) \left(E-\mathcal{A}_\alpha\right)^{\frac{1}{\alpha}}(g)(x) dx  = \langle f;g\rangle_{L^2(\bbr^d,dx)} - \langle f ; \mathcal{A}_\alpha(g)\rangle_{L^2(\bbr^d,dx)}.
\end{align*}
Next, observe that, for all $f,g \in \mathcal{C}_c^{\infty}(\bbr^d)$,
\begin{align*}
- \langle f ; \mathcal{A}_\alpha(g)\rangle_{L^2(\bbr^d,dx)} = \frac{1}{\alpha} \int_{\bbr^d} \langle \nabla(f)(x) ; D^{\alpha-1}(g)(x) \rangle dx = \frac{1}{\alpha} \int_{\bbr^d} \langle D^{\alpha-1}(f)(x) ; \nabla(g)(x) \rangle dx,
\end{align*}
so that, for all $f,g \in \mathcal{C}^{\infty}_c(\bbr^d)$,
\begin{align*}
\int_{\bbr^d} \left(E-\mathcal{A}_\alpha\right)^{\frac{\alpha-1}{\alpha}}(f)(x) \left(E-\mathcal{A}_\alpha\right)^{\frac{1}{\alpha}}(g)(x) dx  = \langle f;g\rangle_{L^2(\bbr^d,dx)} + \frac{1}{\alpha} \int_{\bbr^d} \langle D^{\alpha-1}(f)(x) ; \nabla(g)(x) \rangle dx.
\end{align*}
Then, by H\"older's inequality (with $q=p/(p-1)$), 
\begin{align*}
\left|\int_{\bbr^d} \left(E-\mathcal{A}_\alpha\right)^{\frac{\alpha-1}{\alpha}}(f)(x) \left(E-\mathcal{A}_\alpha\right)^{\frac{1}{\alpha}}(g)(x) dx\right| &\leq \|f\|_{L^p(\bbr^d,dx)}\|g\|_{L^q(\bbr^d,dx)}\\
&\quad\quad +\| \| D^{\alpha-1}(f) \| \|_{L^p(\bbr^d,dx)} \| \|\nabla(g) \| \|_{L^q(\bbr^d,dx)}.
\end{align*}
Thus, from \eqref{ineq:Riesz_Transform_2_all_p} with $r = 1/\alpha$ and $\lambda_0 = 1$, 
\begin{align}\label{ineq:dual_stable}
\left|\int_{\bbr^d} \left(E-\mathcal{A}_\alpha\right)^{\frac{\alpha-1}{\alpha}}(f)(x) \left(E-\mathcal{A}_\alpha\right)^{\frac{1}{\alpha}}(g)(x) dx\right| &\leq \|f\|_{L^p(\bbr^d,dx)} \|g\|_{L^q(\bbr^d,dx)}\nonumber \\
&\quad + \frac{C_{\alpha,q,1/\alpha,1}}{2\Gamma(1/\alpha)} \| \| D^{\alpha-1}(f) \| \|_{L^p(\bbr^d,dx)} \nonumber \\
&\quad \times \left(\int_{\bbr^d} \|F_\alpha(y)\| \mu_\alpha(dy)\right) \|\left(E-\mathcal{A}_\alpha\right)^{\frac{1}{\alpha}}(g)\|_{L^q(\bbr^d, dx)}.
\end{align}
Next, let 
\begin{align*}
\mathcal{E}_\alpha := \{\left(E-\mathcal{A}_\alpha\right)^{\frac{1}{\alpha}}(g),\quad g\in \mathcal{D}(\left(E-\mathcal{A}_\alpha\right)^{\frac{1}{\alpha}})\},
\end{align*}
where $\mathcal{D}(\left(E-\mathcal{A}_\alpha\right)^{{1/\alpha}})$ is the $L^q(\bbr^d,dx)$-domain of the unbounded operator $\left(E-\mathcal{A}_\alpha\right)^{{1/\alpha}}$.  

Recall that, by the theory of Bernstein functions (see, e.g., \cite[Chapter $3.9$]{NJ02_1}) and since $\alpha\in (1,2)$, the unbounded operator $-\left(E-\mathcal{A}_\alpha\right)^{{1/\alpha}}$ is the generator of a subordinated semigroup $(Q^\alpha_t)_{t \geq 0}$ with respect to $(e^{-t}P^\alpha_t)_{t\geq 0}$. In particular, the subordinated semigroup inherits the properties of the semigroup $(e^{-t}P_t^\alpha)_{t\geq 0}$: $(Q^\alpha_t)_{t\geq 0}$ is a semigroup of contracting symmetric operators on $L^2(\bbr^d,dx)$ which are contracting on $L^p(\bbr^d,dx)$ for $p \in [1, +\infty]$. Invoking \cite[Lemma 1]{Russ}, the set $\mathcal{E}_\alpha$ is dense in $L^q(\bbr^d,dx)$ for $q\in (1,+\infty)$. Then, by duality, 
\begin{align*}
\left\| \left(E-\mathcal{A}_\alpha\right)^{\frac{\alpha-1}{\alpha}}(f)\right\|_{L^p(\bbr^d,dx)} = \underset{g^* \in \mathcal{E}_\alpha,\ \|g^*\|_{L^q(\bbr^d,dx)}\leq 1}{\sup} \left|\langle \left(E-\mathcal{A}_\alpha\right)^{\frac{\alpha-1}{\alpha}}(f) ; g^* \rangle\right|.
\end{align*}
Now, since $g^* \in \mathcal{E}_\alpha$, let $g \in \mathcal{D}(\left(E-\mathcal{A}_\alpha\right)^{{1/\alpha}})$ be such that $\left(E-\mathcal{A}_\alpha\right)^{{1/\alpha}}(g)=g^*$. By \eqref{ineq:dual_stable},
\begin{align*}
\left|\langle \left(E-\mathcal{A}_\alpha\right)^{\frac{\alpha-1}{\alpha}}(f) ; g^* \rangle\right| &\leq \|f\|_{L^p(\bbr^d,dx)} \|g\|_{L^q(\bbr^d,dx)} \\
&\quad + \frac{C_{\alpha,q,1/\alpha,1}}{2\Gamma(1/\alpha)} \| \| D^{\alpha-1}(f) \| \|_{L^p(\bbr^d,dx)} \left(\int_{\bbr^d} \| F_\alpha(y) \| \mu_\alpha(dy)\right)\\
&\quad\times \|\left(E-\mathcal{A}_\alpha\right)^{\frac{1}{\alpha}}(g)\|_{L^q(\bbr^d, dx)}.
\end{align*}
The end of the proof follows.
\end{proof}

\begin{rem}\label{rem:second_order_Riesz_transform_Calderon_Zygmund_estimate}
As a byproduct of the results of this section, it is clear that one obtains the $L^p(\bbr^d,dx)$-$L^p(\bbr^d,dx)$ boundedness (with $p \in (1,+\infty)$) of the fractional second order Riesz transform defined, in the Fourier domain, for all $f \in \mathcal{S}(\bbr^d)$, all $\lambda_0>0$, all $\xi \in \bbr^d$ and all $j,k \in \{1, \dots, d\}$, by
\begin{align}\label{eq:Fourier_Rep_Second_Order_Riesz_Transform}
\mathcal{F}\left(D^{\alpha-1}_j \partial_k \circ \left(\lambda_0 E - \mathcal{A}_\alpha\right)^{-1}(f) \right)(\xi) = \dfrac{\tau_{\alpha,j}(\xi) (i \xi_k)}{\lambda_0 + \int_{\mathbb{S}^{d-1}}|\langle \xi ; y \rangle|^\alpha \lambda_1(dy)},
\end{align}
with,
\begin{align*}
D^{\alpha-1}_j(f)(x)  = \int_{\bbr^d} u_j (f(x+u)-f(x))\nu_\alpha(du), \quad
\tau_{\alpha,j}(\xi)  = \int_{\bbr^d} u_j \left(e^{i \langle u ;\xi \rangle}-1\right)\nu_\alpha(du). 
\end{align*}
Indeed, thanks to \eqref{eq:Fourier_Rep_Second_Order_Riesz_Transform},  
\begin{align*}
D^{\alpha-1}_j \partial_k \circ \left(\lambda_0 E - \mathcal{A}_\alpha\right)^{-1} =  \left(D^{\alpha-1}_j \circ  \left(\lambda_0 E - \mathcal{A}_\alpha\right)^{-\frac{\alpha-1}{\alpha}} \right) \circ \left(\partial_k \circ \left(\lambda_0 E - \mathcal{A}_\alpha\right)^{-\frac{1}{\alpha}}\right),
\end{align*}
(the above can also be seen as a straightforward consequence of the commutation relations $\partial_k \circ P^\alpha_t = P^\alpha_t \circ \partial_k$ and $D_j^{\alpha-1} \circ P_t^\alpha = P_t^\alpha \circ D_j^{\alpha-1}$). Then, it is natural to consider fractional Hessian matrices associated with the non-degenerate and symmetric L\'evy measures $\nu_\alpha$: for all $\alpha \in (1,2)$, all $f \in \mathcal{S}(\bbr^d)$, all $j,k \in \{1, \dots, d\}$ and all $x \in \bbr^d$,
\begin{align*}
\left(\operatorname{Hess}_\alpha(f)(x)\right)_{j,k} = \left(D^{\alpha-1}_j \partial_k\right)(f)(x).
\end{align*}
Now, a reformulation of the $L^p(\bbr^d,dx)$-$L^p(\bbr^d,dx)$ boundedness of the fractional second order Riesz transform ensures the following Calder\' on-Zygmund type inequality on $\mathcal{S}(\bbr^d)$: for all $p \in (1,+\infty)$, all $f \in \mathcal{S}(\bbr^d)$ and all $j,k \in \{1, \dots, d\}$,
\begin{align*}
\| \left(\operatorname{Hess}_\alpha(f)\right)_{j,k} \|_{L^p(\bbr^d,dx)} \leq C_{\alpha,p,d,j,k} \left( \| f \|_{L^p(\bbr^d,dx)} + \| \mathcal{A}_{\alpha}(f) \|_{L^p(\bbr^d,dx)} \right), \quad\quad \operatorname{CZI}(\alpha,p)
\end{align*}
for some $C_{\alpha,p,d,j,k} >0$ depending on $\alpha$, $p$, $d$, $j$ and $k$.  Note that the previous inequality is linked to the fact that $\mathcal{S}(\bbr^d)$ is a subspace of the $L^p(\bbr^d,dx)$-domain of the generator $\mathcal{A}_\alpha$.~The $L^p(\bbr^d,dx)$-domains of the generators of such convolution semigroups have already been studied in depth in \cite[Section $3.3$]{NJ02_2}.  Indeed,  \cite[Theorem $3.3.11$]{NJ02_2} ensures that, for all $p \in (1,+\infty)$, the $L^p(\bbr^d,dx)$-domain of the generator $\mathcal{A}_\alpha$ is equal to the Bessel potential space $H^{\psi_\alpha,2}_p(\bbr^d,dx)$ (see \cite[Definition $3.3.3$]{NJ02_2}) associated with the continuous negative definite function $\psi_\alpha$ defined,  for all $\xi \in \bbr^d$, by
\begin{align}\label{eq:symbol_psi_alpha}
\psi_\alpha(\xi) = \int_{\mathbb{S}^{d-1}} |\langle \xi ; y \rangle|^\alpha \lambda_1(dy).
\end{align}
Moreover, according to \cite[Theorem $3.3.8$]{NJ02_2}, the space $\mathcal{S}(\bbr^d)$ is dense in $H^{\psi_\alpha,2}_p(\bbr^d,dx)$.  Thus, $CZI(\alpha,p)$ extends to the whole space $H^{\psi_\alpha,2}_p(\bbr^d,dx)$.
\end{rem}

\begin{rem}\label{rem:Comparaison_Meyer}
In \cite[Chapter $IV$]{M0}, P.A. Meyer studies the Littlewood-Paley-Stein inequalities associated with symmetric convolution semigroups on $\bbr^d$ of which the semigroup $(P^\alpha_t)_{t \geq 0}$, with $\alpha \in (1,2)$,  is a particular instance. Based on the ``carr\'e du champs" operator and on the associated Littlewood-Paley-Stein inequalities, he states the following solution to the problem of equivalence of norms for the symmetric convolution semigroup $(P^\alpha_t)_{t \geq 0}$ (see \cite[Theorem $3$ page $180$]{M0}): for all $p \in (1, + \infty)$ and all $f$ good enough, 
\begin{align}\label{ineq:norm_equivalence}
c_p \| \sqrt{\Gamma_\alpha(f,f)} \|_{L^p(\bbr^d,dx)} \leq \|B_\alpha f\|_{L^p(\bbr^d,dx)} \leq C_p \| \sqrt{\Gamma_\alpha(f,f)} \|_{L^p(\bbr^d,dx)},
\end{align}
for some $C_p, c_p>0$ independent of the dimension, where $B_\alpha$ is the generator of the Cauchy semigroup subordinated to $(P^\alpha_t)_{t \geq 0}$ and where $\Gamma_\alpha$ is the square field operator associated with $\mathcal{A}_\alpha$ and given, for all $f,g \in \mathcal{S}(\bbr^d)$ and all $x \in \bbr^d$, by
\begin{align*}
\Gamma_\alpha(f,g)(x) = \frac{1}{2} \left(\mathcal{A}_\alpha(f\overline{g})(x) - f(x) \mathcal{A}_\alpha(\overline{g})(x) - \overline{g}(x) \mathcal{A}_\alpha(f)(x)\right),
\end{align*}
where $\overline{g}$ is the complex conjugate of $g$.  By Fourier inversion, for all $f,g \in \mathcal{S}(\bbr^d)$ and all $x \in \bbr^d$, 
\begin{align*}
&\Gamma_\alpha(f,g)(x) = \frac{1}{2} \int_{\bbr^{2d}} \mathcal{F}(f)(\xi_1) \overline{\mathcal{F}(g)(\xi_2)} e^{i \langle x ; \xi_1 - \xi_2 \rangle} \left(\psi_\alpha(\xi_1)+ \psi_\alpha(\xi_2) - \psi_\alpha(\xi_1-\xi_2) \right) \frac{d\xi_1 d\xi_2}{(2\pi)^{2d}}, \\
&B_\alpha(f)(x) = \frac{1}{(2\pi)^d} \int_{\bbr^d} \mathcal{F}(f)(\xi) e^{i \langle x ; \xi\rangle}\left(- \sqrt{\psi_\alpha(\xi)}\right) d\xi.
\end{align*}
Finally, \cite{M0} reformulates an inequality necessary to obtain one side of \eqref{ineq:norm_equivalence} as a multiplier theorem on $L^p(\bbr^d,dx)$, with $p \in (1,+\infty)$, reminiscent of the one obtained in Corollary \ref{cor_fractional_homo_Riesz_transform}.  No regularity assumption is imposed there on the Fourier transform of the admissible distribution $\Lambda$ (see \cite[Inequality $(9)$ page $178$]{M0}) and the operator norms are dimension-free (see \cite[Theorem $6$ page $182$]{M0}). The method is completely different from the one followed in this manuscript but predicts results at a higher degree of generality. On the other hand, the combination of Theorem \ref{prop:Riesz_sharp} and of Proposition \ref{prop:lower_bound_Riesz_stable_order_1} provides a new formulation of the equivalence of norms associated with the ``fractional derivatives" $D^{\alpha-1}$ and $\left(E-\mathcal{A}_\alpha\right)^{\frac{\alpha-1}{\alpha}}$.
\end{rem}

\begin{rem}\label{rem:Junge_JEMS}
\cite{JMP18} obtains solutions to Meyer's problem (MP) of equivalence of norms in a very general setting which encompasses the case of symmetric convolution semigroups on $\bbr^d$. As noticed by the authors,  for the Cauchy semigroup,  the probabilistic formulation of (MP) fails to be true if $p\in (1, 2d/(d+1)]$ and $d \geq 2$. For the sake of completeness, let us recall and modify slightly the argument contained in \cite[Appendix $D$]{JMP18}.  Let $f$ be a non-zero infinitely differentiable function on $\bbr^d$ with compact support contained in the Euclidean unit ball. Recall that the square field operator associated with a well-chosen rotationally invariant Cauchy probability measure on $\bbr^d$ admits the following representation: for all $x \in \bbr^d$,
\begin{align*}
\Gamma_1(f,f)(x) =  \frac{1}{2} \int_{\bbr^d} |f(x+u) - f(x)|^2 \frac{du}{\|u\|^{d+1}}.
\end{align*}
Take $x \in \bbr^d$ with $\|x\| \geq 3$. Then, 
\begin{align*}
\Gamma_1(f,f)(x) & =  \frac{1}{2} \int_{\bbr^d} |f(x+u) |^2 \frac{du}{\|u\|^{d+1}} = \frac{1}{2} \int_{\mathcal{B}(0,1)} |f(v)|^2 \frac{dv}{\|v-x\|^{d+1}}  \geq \frac{c_{f,d}}{\|x\|^{d+1}},
\end{align*}
for some $c_{f,d}>0$ depending on $f$ and on $d$.  Then, if $p \in (1, 2d/(d+1)]$, it is clear that $\sqrt{\Gamma_1(f,f)} \notin L^p(\bbr^d,dx)$ whereas $(-\mathcal{A}_1)^{\frac{1}{2}}(f) \in L^p(\bbr^d,dx)$.  By a similar argument, the probabilistic formulation of (MP) fails to be true in the rotationally invariant situation for $\alpha \in (1,2)$, $d \geq 2$ and $p \in (1,2d/(\alpha+d)]$.  Finally, let us discuss the results obtained in \cite{JMP18} regarding the boundedness of Riesz transforms associated with the standard fractional Laplacian operators.This setting corresponds to the invariance by rotation of the spherical part of the L\'evy measure.  According to \cite[Section $1.4$, Example $A$]{JMP18},  \cite[Theorem $A.1$]{JMP18} for locally compact unimodular groups provides boundedness results for the Riesz transforms defined
in the Fourier domain, for all $f \in \mathcal{S}(\bbr^d)$, all $\xi \in \bbr^d$ and all $j \geq 1$, by
\begin{align*}
\mathcal{F} \left(\mathcal{R}_{\beta,j}(f)\right)(\xi) = \dfrac{\langle b(\xi) ; e_j \rangle_{\mathcal{H}_\beta}}{\|\xi\|^\beta} \mathcal{F}(f)(\xi),
\end{align*}
where $\beta \in (0,1)$, $b(\xi)(.) = \exp \left(i \langle . ; \xi \rangle\right) - 1$,  
\begin{align*}
\mathcal{H}_{\beta} = L^2\left(\bbr^d, \dfrac{dx}{k_d(\beta)\|x\|^{2\beta+d}}\right),
\end{align*}
for some normalizing constant $k_d(\beta)>0$, and where $\{e_j ,\, j \geq 1\}$ is some orthonormal basis of $\mathcal{H}_{\beta}$.
\end{rem}

\begin{rem}\label{rem:kim}
Beyond the non-degenerate symmetric $\alpha$-stable case, \cite{BB07,BBB11,BanOse12,BBL16,BK19} obtain Fourier multiplier theorems for symbols which are linked to L\'evy processes.~For example, \cite[Theorem $4.1$]{BK19} ensures the $L^p(\bbr^d,dx)$-boundedness, $p \in (1,+\infty)$, of convolution operators with symbols defined, for all $\xi \in \bbr^d$, by
\begin{align*}
m_{\phi,\nu}(\xi) := \int_{\bbr^d} \int_0^{+\infty} \left|e^{i \langle u ; \xi \rangle}-1\right|^2 \phi(t,u) e^{- 2t \operatorname{Re}(\psi(\xi))}\nu(du)dt,
\end{align*}
where $\nu$ is a L\'evy measure on $\bbr^d$, $\psi$ is the corresponding L\'evy exponent satisfying the Hartman–Wintner condition and $\phi$ is a bounded and measurable real-valued function on $(0,+\infty) \times \bbr^d$. In particular, when $\psi$ is the L\'evy exponent of a non-degenerate symmetric $\alpha$-stable probability measure on $\bbr^d$ and when $\phi$ is time independent, then $m_{\phi,\nu_\alpha}$ boils down to
\begin{align*}
m_{\phi,\nu_\alpha}(\xi) := \dfrac{\int_{\bbr^d} \left|e^{i \langle u ; \xi \rangle}-1\right|^2 \phi(u) \nu_\alpha(du)}{2 \psi_\alpha(\xi)}, \quad \xi \in \bbr^d.
\end{align*}
\end{rem}

\section{The Stable Cases}\label{sec:SC}
\noindent
We start by introducing some notations. Let $K_{\alpha,r}$ be the non-negative, integrable, function defined, 
for all $r \in (r_{\min}(\alpha);+\infty)$ and all $t>0$, by
\begin{align}\label{def:kernel_alpha}
K_{\alpha,r}(t) = \frac{1}{2 \Gamma\left(\frac{r}{2}\right)} \dfrac{e^{-\alpha t}}{t^{1-\frac{r}{2}} \left(1-e^{-\alpha t}\right)^{1-\frac{1}{\alpha}}}, 
\end{align}
and let $\mathcal{I}_{\alpha,r}$ be the linear operator, defined, for all $f \in \mathcal{S}(\bbr^d)$ and all $x,y \in \bbr^d$, by
\begin{align}\label{def:singular_operator_alpha}
\mathcal{I}_{\alpha,r}(f)(x,y) = \int_{0}^{+\infty} K_{\alpha,r}(t) \left(f\left(xe^{-t}+y(1-e^{-\alpha t})^{\frac{1}{\alpha}}\right)-f\left(xe^{-t}-y(1-e^{-\alpha t})^{\frac{1}{\alpha}}\right)\right) dt.
\end{align}
Then, the operator ${\mathbf D}^{\alpha-1} \circ \left(E-\mathcal{L}^\alpha\right)^{-\frac{r}{2}}$ admits the following representation: for all $f \in \mathcal{S}(\bbr^d)$ and all $x \in \bbr^d$,
\begin{align}\label{compact:rep}
{\mathbf D}^{\alpha-1} \circ \left(E-\mathcal{L}^\alpha\right)^{-\frac{r}{2}}(f)(x) & 
= \int_{\bbr^d} y \mathcal{I}_{\alpha,r}(f)(x,y) \mu_\alpha(dy). 
\end{align}
This representation is analoguous to the one obtained in the proof of \cite[Corollary $3$]{F90} for the Gaussian case where the continuity properties of an ergodic singular operator associated with a dilation of a re-parametrization of the Gaussian OU semigroup is first analyzed. The next theorem studies the $L^p(\mu_\alpha)$-continuity of this operator for the non-singular situation $r > r_{\min}(\alpha)$.

\begin{thm}\label{thm:First}
Let $\alpha \in (1,2)$, let $r \in (r_{\min}(\alpha);+\infty)$, let $\nu_\alpha$ be a non-degenerate symmetric L\'evy measure on $\bbr^d$ satisfying \eqref{eq:scale}, let $\mu_\alpha$ be the associated $\alpha$-stable probability 
measure on $\bbr^d$ given by \eqref{eq:rep_spectral_measure}, let $\mathbf{D}^{\alpha-1}$ be defined by \eqref{eq:FracGradWB} and let $\mathcal{L}^\alpha$ be defined by \eqref{eq:StOUgen}.  
Then, for all $p\in (\alpha/(\alpha-1),+\infty)$ and all $f\in L^p(\mu_\alpha)$,
\begin{align}\label{ineq:continuity_Riesz_stable1}
\| \|{\mathbf D}^{\alpha-1} \circ \left(E-\mathcal{L}^\alpha\right)^{-\frac{r}{2}}(f)\| \|_{L^p(\mu_\alpha)} \leq  2\gamma_\alpha(q) \|K_{\alpha,r}\|_{L^1((0,+\infty),dt)} \|f\|_{L^p(\mu_\alpha)},
\end{align}
with $q=p/(p-1)$ and with
\begin{align*}
\gamma_\alpha(q) =  \left(\bbe |Y_\alpha|^q\right)^{1/q} \underset{z \in \bbr^d, \|z\| = 1}{\sup}\left( \int_{\mathbb{S}^{d-1}} |\langle z; x \rangle|^\alpha \lambda_1(dx) \right)^{{1/\alpha}},
\end{align*}
where $\lambda_1$ is the spectral measure of $\mu_\alpha$ and $Y_\alpha$ is a symmetric $\alpha$-stable random variable with characteristic function given by $\exp \left( - |\xi|^\alpha\right)$, for $\xi \in \bbr$.
\end{thm}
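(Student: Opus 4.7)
The plan is to start from the compact representation \eqref{compact:rep} and, after swapping orders of integration, write
\begin{align*}
{\mathbf D}^{\alpha-1} \circ \left(E-\mathcal{L}^\alpha\right)^{-\frac{r}{2}}(f)(x) = \int_0^{+\infty} K_{\alpha,r}(t) \int_{\bbr^d} y\bigl(f(xe^{-t}+yb(t))-f(xe^{-t}-yb(t))\bigr)\mu_\alpha(dy)\,dt,
\end{align*}
with $b(t):=(1-e^{-\alpha t})^{1/\alpha}$. Taking Euclidean norms and applying Minkowski's integral inequality in $L^p(\mu_\alpha,dx)$, since $K_{\alpha,r}$ is non-negative and (by the assumption $r>r_{\min}(\alpha)$, so that the singularity $(1-e^{-\alpha t})^{1/\alpha-1}\sim t^{1/\alpha-1}$ near $0$ is tamed by $t^{r/2-1}$, and $e^{-\alpha t}$ ensures decay at infinity) integrable on $(0,+\infty)$, the problem reduces to bounding, uniformly in $t$, the $L^p(\mu_\alpha,dx)$ norm of $x\mapsto \bigl\|\int_{\bbr^d} y h_{t,x}(y)\mu_\alpha(dy)\bigr\|$, where $h_{t,x}(y):=f(xe^{-t}+yb(t))-f(xe^{-t}-yb(t))$.

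Next, for each fixed $(t,x)$, I would realize the norm of this vector-valued integral by duality as
\begin{align*}
\left\|\int_{\bbr^d} y\, h_{t,x}(y)\mu_\alpha(dy)\right\| = \underset{z\in\bbr^d,\,\|z\|=1}{\sup}\left|\int_{\bbr^d} \langle z;y\rangle\, h_{t,x}(y)\mu_\alpha(dy)\right|,
\end{align*}
and apply H\"older's inequality with conjugate exponents $(p,q)$, $q=p/(p-1)$. The $y$-factor produces exactly $\gamma_\alpha(q)$, which, by the representation \eqref{eq:rep_gamma_alpha}, is finite precisely when $q<\alpha$, i.e., when $p>\alpha/(\alpha-1)$, which is the reason for the standing hypothesis on $p$. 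After taking the $L^p(\mu_\alpha,dx)$ norm and applying the triangle inequality together with the symmetry $\mu_\alpha(-dy)=\mu_\alpha(dy)$, one arrives at
\begin{align*}
\left\|\left\|\int_{\bbr^d}y\, h_{t,x}(y)\mu_\alpha(dy)\right\|\right\|_{L^p(\mu_\alpha,dx)} \leq 2\gamma_\alpha(q)\left(\iint |f(xe^{-t}+yb(t))|^p \mu_\alpha(dy)\mu_\alpha(dx)\right)^{1/p}.
\end{align*}

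The last step is to recognize the inner $y$-integral as $P^{\nu_\alpha}_{t}(|f|^p)(x)$ via the defining representation \eqref{eq:StOUSM}, and invoke the invariance of $\mu_\alpha$ under the semigroup $(P^{\nu_\alpha}_t)_{t\geq 0}$ to conclude that the double integral equals $\|f\|_{L^p(\mu_\alpha)}^p$, uniformly in $t$. Re-inserting into the Minkowski step and integrating $K_{\alpha,r}$ against $dt$ yields the claimed inequality \eqref{ineq:continuity_Riesz_stable1} with the multiplicative constant $2\gamma_\alpha(q)\|K_{\alpha,r}\|_{L^1((0,+\infty),dt)}$. There is no real technical obstacle beyond checking these two finiteness conditions: the non-singularity of $r$ (for $K_{\alpha,r}\in L^1$) and the moment condition $q<\alpha$ (for $\gamma_\alpha(q)<+\infty$); both are exactly the standing assumptions of the theorem. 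A standard density argument extends the estimate from $f\in\mathcal{S}(\bbr^d)$ to $f\in L^p(\mu_\alpha)$.
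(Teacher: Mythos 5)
Your proposal is correct and follows essentially the same route as the paper's proof: the representation \eqref{compact:rep}, duality plus H\"older producing $\gamma_\alpha(q)$ (whence the restriction $q<\alpha$, i.e.\ $p>\alpha/(\alpha-1)$), Minkowski's integral inequality against $K_{\alpha,r}\in L^1$, and the fact that $xe^{-t}\pm(1-e^{-\alpha t})^{1/\alpha}y$ has law $\mu_\alpha$ under $\mu_\alpha\otimes\mu_\alpha$. The only difference is the order in which Minkowski and H\"older are applied (you pull the $t$-integral out first; the paper bounds $\|\mathcal{I}_{\alpha,r}(f)\|_{L^p(\mu_\alpha\otimes\mu_\alpha)}$ after H\"older), which is immaterial.
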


\begin{proof}
First, by duality and the representation \eqref{compact:rep}, for all $f \in \mathcal{S}(\bbr^d)$ and 
all $x \in \bbr^d$, 
\begin{align*}
\|{\mathbf D}^{\alpha-1} \circ \left(E-\mathcal{L}^\alpha\right)^{-\frac{r}{2}}(f)(x) \| 
= \underset{z\in \bbr^d,\, \|z\|=1}{\sup} \left|\langle z ; \int_{\bbr^d} y \mathcal{I}_{\alpha,r}(f)(x,y) \mu_\alpha(dy) \rangle\right|.
\end{align*}
Let $p \in (\alpha/(\alpha-1),+\infty)$ and let $q$ be its conjugate exponent, then by H\"older's inequality,  
\begin{align*}
\|{\mathbf D}^{\alpha-1} \circ \left(E-\mathcal{L}^\alpha\right)^{-\frac{r}{2}}(f)(x) \| \leq  \underset{z\in \bbr^d,\, \|z\|=1}{\sup} \left(\int_{\bbr^d} \left|\langle y;z \rangle\right|^q \mu_\alpha(dy)\right)^{\frac{1}{q}} \left(\int_{\bbr^d} \left|\mathcal{I}_{\alpha,r}(f)(x,y)\right|^p \mu_\alpha(dy)\right)^{\frac{1}{p}}. 
\end{align*}
Now, performing an analysis similar to the one performed 
between the inequalities \eqref{eq:def_gamma_alpha} and \eqref{eq:rep_gamma_alpha}, it follows that, 
\begin{align*}
\|{\mathbf D}^{\alpha-1} \circ \left(E-\mathcal{L}^\alpha\right)^{-\frac{r}{2}}(f)(x) \| \leq  \gamma_\alpha(q)\left(\int_{\bbr^d} \left|\mathcal{I}_{\alpha,r}(f)(x,y)\right|^p \mu_\alpha(dy)\right)^{\frac{1}{p}}, 
\end{align*}
for all $f \in \mathcal{S}(\bbr^d)$ and all $x \in \bbr^d$.  
Thus, for all $f \in \mathcal{S}(\bbr^d)$, 
\begin{align*}
\| \|{\mathbf D}^{\alpha-1} \circ \left(E-\mathcal{L}^\alpha\right)^{-\frac{r}{2}}(f)\| \|_{L^p(\mu_\alpha)} \leq  \gamma_\alpha(q) \|\mathcal{I}_{\alpha,r}(f)\|_{L^p(\mu_\alpha \otimes \mu_\alpha)}.
\end{align*}
Minkowski's integral inequality together with the fact that under $\mu_\alpha \otimes \mu_\alpha$, $xe^{-t}\pm(1-e^{-\alpha t})^{\frac{1}{\alpha}}y$ has law $\mu_\alpha$ finishes the proof of the theorem. 
\end{proof}

Note that for the rotationally invariant case, \eqref{ineq:continuity_Riesz_stable1} boils down 
to the \textit{dimension-free estimate},
\begin{align}\label{ineq:dimensionfree_Riesz_Stable}
\| \|{\mathbf D}^{\alpha-1, \operatorname{rot}} \circ \left(E-\mathcal{L}^{\alpha,\operatorname{rot}}\right)^{-\frac{r}{2}}(f)\| \|_{L^p(\mu_{\alpha}^{\rm rot})} \leq   2^{(\alpha-1)/\alpha} \left(\bbe |Y_\alpha|^q\right)^{{1/q}}\|K_{\alpha,r}\|_{L^1((0,+\infty),dt)} \|f\|_{L^p(\mu_{\alpha}^{\rm rot})}, 
\end{align}
which is valid for all $p\in (\alpha/(\alpha-1),+\infty)$ and all 
$f\in L^p(\mu_{\alpha}^{\rm rot})$.  

A drawback of the previous inequality is that it does not pass to the limit 
as $\alpha$ tends to $2$ with $r = 1$ or as $r$ tends to $r_{\min}(\alpha)$ with $\alpha$ fixed in $(1,2)$. Indeed, the $L^1$-norm of the non-negative function $K_{\alpha,1}$ diverges as $\alpha$ tends to $2$. Thus, a more refined approach seems necessary in order to retrieve the celebrated Gaussian case 
(see, e.g., \cite[Chapter $9$]{Ur19} for relevant pointers to this very rich literature). One approach to the Gaussian case which provides dimension-free estimate is based, as in the homogeneous case, on an elegant transference argument (see \cite{P88}) in order to obtain a sharp $L^p(\gamma\otimes\gamma)$-continuity result for the operator $\mathcal{I}_{2,1}$ (see also \cite{F90}). This transference argument takes into account the regularization effect due to the symmetry of the Gaussian measure; however it is not clear if such a transference argument passes to the general symmetric $\alpha$-stable situation.~To end this section, let us prove a similar result for the ``carr\'e de Mehler".  


\begin{cor}\label{thm:Second}
Let $\alpha \in (1,2)$, let $r \in (r_{\min}(\alpha),+\infty)$, let $\nu_\alpha$ be a non-degenerate symmetric L\'evy measure on $\bbr^d$ satisfying \eqref{eq:scale}, let $\mu_\alpha$ be the associated $\alpha$-stable probability 
measure on $\bbr^d$ given by \eqref{eq:rep_spectral_measure}, let $\mathbf{D}^{\alpha-1}$ be defined by \eqref{eq:FracGradWB} and let $\mathcal{L}$ be the generator of the ``carr\'e de Mehler".  Then, for all $p\in (\alpha/(\alpha-1),+\infty)$ and all $f\in L^p(\mu_\alpha)$,
\begin{align}\label{ineq:continuity_Riesz_stable3}
\| \|{\mathbf D}^{\alpha-1} \circ \left(E-\mathcal{L}\right)^{-\frac{r}{2}}(f)\| \|_{L^p(\mu_\alpha)} 
\leq  \alpha^{(\alpha - 1)/\alpha}C_\alpha \gamma_\alpha(q) \dfrac{\Gamma\left(\frac{r}{2}+\frac{1}{\alpha}-1\right)}{\Gamma(\frac{r}{2})} \|f\|_{L^p(\mu_\alpha)},
\end{align}
with $q=p/(p-1)$ and with
\begin{align*}
C_\alpha = \underset{t>0}{\max} \dfrac{t^{1-\frac{1}{\alpha}}e^{-(\alpha-1)t}}{\left(1-e^{-\alpha t}\right)^{1-\frac{1}{\alpha}}}, \quad
\gamma_\alpha(q) =  \left(\bbe |Y_\alpha|^q\right)^{{1/q}} \underset{z \in \bbr^d, \|z\| = 1}{\sup}\left( \int_{\mathbb{S}^{d-1}} |\langle z; x \rangle|^\alpha \lambda_1(dx) \right)^{{1/\alpha}},
\end{align*}
where $\lambda_1$ is the spectral measure of $\mu_\alpha$ and $Y_\alpha$ is a symmetric real-valued $\alpha$-stable random variable with characteristic function given by $\exp \left( - |\xi|^\alpha\right)$, for $\xi \in \bbr$.
\end{cor}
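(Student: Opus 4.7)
The plan is to adapt the argument of Theorem~\ref{thm:First} to the ``carr\'e de Mehler'' by exploiting the decomposition $\mathcal{P}_t = P^{\nu_\alpha}_{t/\alpha} \circ (P^{\nu_\alpha}_{t/\alpha})^*$ together with the Bismut formula \eqref{eq:bismut_stable_OU} applied at time $t/\alpha$. First, I would use the Gamma transform representation \eqref{eq:Gam_Tr} to write
\[
\mathbf{D}^{\alpha-1} \circ (E-\mathcal{L})^{-r/2}(f) = \frac{1}{\Gamma(r/2)} \int_0^{+\infty} \frac{e^{-t}}{t^{1-r/2}} \, \mathbf{D}^{\alpha-1}(\mathcal{P}_t f) \, dt,
\]
and bring the $L^p(\mu_\alpha)$-norm inside this integral via Minkowski's integral inequality. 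To control the integrand for each fixed $t>0$, set $h_t := (P^{\nu_\alpha}_{t/\alpha})^* f$, so that $\mathcal{P}_t f = P^{\nu_\alpha}_{t/\alpha}(h_t)$, and substitute time $t/\alpha$ into \eqref{eq:bismut_stable_OU} to obtain
\[
\mathbf{D}^{\alpha-1}(\mathcal{P}_t f)(x) = \frac{e^{-(\alpha-1)t/\alpha}}{(1-e^{-t})^{1-1/\alpha}} \int_{\bbr^d} y\, h_t\bigl(xe^{-t/\alpha} + (1-e^{-t})^{1/\alpha} y\bigr) \mu_\alpha(dy).
\]

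Next, I would run the duality-and-H\"older argument from the proof of Theorem~\ref{thm:First} pointwise in $x$, which extracts the factor $\gamma_\alpha(q)$ and reduces the analysis to an $L^p(\mu_\alpha \otimes \mu_\alpha)$-bound on $h_t\bigl(xe^{-t/\alpha} + (1-e^{-t})^{1/\alpha}y\bigr)$. The key observation is that, under $\mu_\alpha \otimes \mu_\alpha$, the variable $xe^{-t/\alpha} + (1-e^{-t})^{1/\alpha}y$ again has law $\mu_\alpha$, as is immediate from \eqref{eq:rep_spectral_measure} and the identity $(e^{-t/\alpha})^{\alpha} + ((1-e^{-t})^{1/\alpha})^{\alpha} = 1$. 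Combining this invariance with the $L^p(\mu_\alpha)$-contractivity of $(P^{\nu_\alpha}_{t/\alpha})^*$ yields
\[
\|\|\mathbf{D}^{\alpha-1}(\mathcal{P}_t f)\|\|_{L^p(\mu_\alpha)} \leq \gamma_\alpha(q)\, \frac{e^{-(\alpha-1)t/\alpha}}{(1-e^{-t})^{1-1/\alpha}} \, \|f\|_{L^p(\mu_\alpha)}.
\]

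To conclude, I would control the singularity at $t=0$ by the bound
\[
\frac{e^{-(\alpha-1)t/\alpha}}{(1-e^{-t})^{1-1/\alpha}} \leq \frac{\alpha^{(\alpha-1)/\alpha} C_\alpha}{t^{1-1/\alpha}},
\]
which, after the rescaling $s = t/\alpha$, is exactly the defining property of $C_\alpha$. Plugging this into the Gamma-transform representation and computing
\[
\int_0^{+\infty} \frac{e^{-t}}{t^{2-r/2-1/\alpha}} \, dt = \Gamma\Bigl(\frac{r}{2} + \frac{1}{\alpha} - 1\Bigr)
\]
(finite precisely for $r > r_{\min}(\alpha)$) gives the announced inequality.

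The main point requiring care is the interplay between the Bismut formula and the dual semigroup: the decomposition $\mathcal{P}_t = P^{\nu_\alpha}_{t/\alpha} \circ (P^{\nu_\alpha}_{t/\alpha})^*$ is what makes the Bismut identity applicable at time $t/\alpha$, and the auxiliary factor $h_t = (P^{\nu_\alpha}_{t/\alpha})^* f$ must then be absorbed cleanly through $L^p(\mu_\alpha)$-contractivity so that no dimension-dependence sneaks in. Everything else is a straightforward rescaling calculation and a direct Gamma-function evaluation; the condition $r > r_{\min}(\alpha)$ appears naturally as the integrability threshold at $t=0$, and the condition $p > \alpha/(\alpha-1)$ is inherited from the finiteness of $\gamma_\alpha(q)$ via \eqref{eq:rep_gamma_alpha}.
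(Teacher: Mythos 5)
Your proposal is correct and follows essentially the same route as the paper: the Bismut formula \eqref{eq:bismut_stable_OU} applied through the decomposition $\mathcal{P}_t = P^{\nu_\alpha}_{t/\alpha}\circ(P^{\nu_\alpha}_{t/\alpha})^*$, duality plus H\"older to extract $\gamma_\alpha(q)$, contractivity of the dual semigroup to absorb $h_t$, the $C_\alpha$-bound on the singularity at $t=0$, and the Gamma-function evaluation. The only (cosmetic) difference is that the paper first proves the $t^{-(1-1/\alpha)}$ bound for $P^{\nu_\alpha}_t$ itself and then rescales $t\mapsto t/\alpha$, whereas you apply the Bismut identity at time $t/\alpha$ directly; the constants agree.
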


\begin{proof}
From \eqref{eq:bismut_stable_OU}, for all $f \in \mathcal{S}(\bbr^d)$, all $x \in \bbr^d$ and all $t>0$,
\begin{align}\label{eq:bismut_stable_OU2}
\mathbf{D}^{\alpha-1}(P^{\nu_\alpha}_t(f))(x) = \dfrac{e^{-(\alpha-1)t}}{\left(1-e^{-\alpha t}\right)^{1-\frac{1}{\alpha}}} \int_{\bbr^d} y f(xe^{-t}+(1-e^{-\alpha t})^{\frac{1}{\alpha}}y) \mu_\alpha(dy).
\end{align}
Now, by duality and H\"older's inequality ($q\in (1, \alpha)$ and $q=p/(p-1)$), for all $f \in \mathcal{S}(\bbr^d)$, all $x \in \bbr^d$ and all $t>0$, 
\begin{align*}
\|\mathbf{D}^{\alpha-1}(P^{\nu_\alpha}_t(f))(x)\| & =\dfrac{e^{-(\alpha-1)t}}
{\left(1-e^{-\alpha t}\right)^{1-\frac{1}{\alpha}}} \underset{z\in \bbr^d,\, \|z\|=1}{\sup} \left|\langle z; \int_{\bbr^d} y f(xe^{-t}+(1-e^{-\alpha t})^{\frac{1}{\alpha}}y) \mu_\alpha(dy) \rangle\right| \\
& = \dfrac{e^{-(\alpha-1)t}}{\left(1-e^{-\alpha t}\right)^{1-\frac{1}{\alpha}}} \underset{z\in \bbr^d,\, \|z\|=1}{\sup} \left| \int_{\bbr^d} \langle z;y \rangle f(xe^{-t}+(1-e^{-\alpha t})^{\frac{1}{\alpha}}y) \mu_\alpha(dy)\right| \\
& \leq \dfrac{e^{-(\alpha-1)t}}{\left(1-e^{-\alpha t}\right)^{1-\frac{1}{\alpha}}} \gamma_\alpha(q) \left(\int_{\bbr^d} \left| f(xe^{-t}+(1-e^{-\alpha t})^{\frac{1}{\alpha}}y) \right|^p \mu_\alpha(dy)\right)^{{1/p}} \\
&\leq \frac{C_\alpha}{t^{1-\frac{1}{\alpha}}}  \gamma_\alpha(q) \left(\int_{\bbr^d} \left| f(xe^{-t}+(1-e^{-\alpha t})^{\frac{1}{\alpha}}y) \right|^p \mu_\alpha(dy)\right)^{{1/p}},
\end{align*}
with $C_\alpha = \underset{t>0}{\max} \dfrac{t^{1-\frac{1}{\alpha}}e^{-(\alpha-1)t}}{\left(1-e^{-\alpha t}\right)^{1-\frac{1}{\alpha}}}$ and $\gamma_\alpha(q) =  \left(\bbe |Y_\alpha|^q\right)^{{1/q}} \underset{z \in \bbr^d, \|z\| = 1}{\sup}\left( \int_{\mathbb{S}^{d-1}} |\langle z; x \rangle|^\alpha \lambda_1(dx) \right)^{{1/\alpha}}$.
Integrating with respect to $\mu_\alpha$ gives
\begin{align*}
\left(\int_{\bbr^d}\|\mathbf{D}^{\alpha-1}(P^{\nu_\alpha}_t(f))(x)\|^p \mu_\alpha(dx)\right)^{{1/p}} \leq \frac{C_\alpha}{t^{1-\frac{1}{\alpha}}} \gamma_\alpha(q) \|f\|_{L^p(\mu_\alpha)}, 
\end{align*}
which, by density, extends to all $L^p(\mu_\alpha)$.  
Thus, for all $t>0$ and all $f \in L^p(\mu_\alpha)$, $p \in (\alpha/(\alpha-1),+\infty)$,
\begin{align*}
\| \|\mathbf{D}^{\alpha-1}(\mathcal{P}_t(f))\| \|_{L^p(\mu_\alpha)} \leq \frac{\alpha^{1-\frac{1}{\alpha}}C_\alpha}{t^{1-\frac{1}{\alpha}}} \gamma_\alpha(q) \|f\|_{L^p(\mu_\alpha)}.
\end{align*}
Finally, the associated operator of order $r$ is given, for all $f \in \mathcal{S}(\bbr^d)$, by
\begin{align*}
\mathbf{D}^{\alpha-1} \circ \left(E-\mathcal{L}\right)^{-\frac{r}{2}} (f) = \int_{0}^{+\infty} \dfrac{e^{-t}}{\Gamma(\frac{r}{2})t^{1-\frac{r}{2}}} \mathbf{D}^{\alpha-1} (\mathcal{P}_t(f)) dt,
\end{align*}
where $\mathcal{L}$ is the generator of $(\mathcal{P}_t)_{t\geq 0}$. Then, for all $f \in \mathcal{S}(\bbr^d)$,
\begin{align*}
\|\|\mathbf{D}^{\alpha-1} \circ \left(E-\mathcal{L}\right)^{-\frac{r}{2}} (f)\|\|_{L^p(\mu_\alpha)} & \leq  \int_{0}^{+\infty} \dfrac{e^{-t}}{\Gamma(\frac{r}{2})t^{1-\frac{r}{2}}} \|\mathbf{D}^{\alpha-1} (\mathcal{P}_t(f))\|_{L^p(\mu_\alpha)} dt , \\
& \leq \alpha^{1-\frac{1}{\alpha}}C_\alpha \gamma_\alpha(q) \left( \int_{0}^{+\infty} \frac{e^{-t}}{\Gamma(\frac{r}{2}) t^{2-\frac{r}{2}-\frac{1}{\alpha}}} dt \right) \|f\|_{L^p(\mu_\alpha)} , \\
& \leq \alpha^{1-\frac{1}{\alpha}}C_\alpha \gamma_\alpha(q) \dfrac{\Gamma\left(\frac{r}{2}+\frac{1}{\alpha}-1\right)}{\Gamma(\frac{r}{2})} \|f\|_{L^p(\mu_\alpha)},
\end{align*}
which concludes the proof of the corollary.  
\end{proof}

\section{Appendix}
\noindent

 \begin{lem}\label{lem:simple}
 Let $\alpha \in (1,2)$, let $r>0$ and let 
 \begin{align*}
 \tilde{\psi}_{\alpha,r}(s) := \frac{s^{\frac{\alpha-1}{2}}}{\left(2+ s^{\frac{\alpha}{2}}\right)^{\frac{r}{2}}},  
 \end{align*}
$s \in (0,+\infty)$.  Then, for all $k \geq 1$ and all $s \in (0,+\infty)$,
 \begin{align*}
  \tilde{\psi}^{(k)}_{\alpha,r}(s) = \dfrac{s^{\frac{\alpha-1}{2}-k}\tilde{P}_{\alpha,k,r}(s^{\frac{\alpha}{2}})}{(2+s^{\frac{\alpha}{2}})^{\frac{r}{2}+k}},
 \end{align*}
 where $\tilde{P}_{\alpha,k,r}$ is a polynomial of degree $k$ which coefficients depend on $k$, on $\alpha$ and on $r$.  
 \end{lem}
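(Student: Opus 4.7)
The plan is to proceed by induction on $k\geq 1$. The statement is a purely algebraic claim about the form of derivatives of a rational-exponential function in the variable $s$, and the structure is stable under differentiation.

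For the base case $k=1$, I would differentiate $\tilde\psi_{\alpha,r}(s) = s^{(\alpha-1)/2}(2+s^{\alpha/2})^{-r/2}$ directly using the product rule, obtaining two terms: one from differentiating $s^{(\alpha-1)/2}$ and one from differentiating $(2+s^{\alpha/2})^{-r/2}$. After bringing the two terms to a common denominator $(2+s^{\alpha/2})^{r/2+1}$ and factoring out $s^{(\alpha-1)/2-1}$, the numerator becomes
\[
\frac{\alpha-1}{2}\bigl(2+s^{\alpha/2}\bigr) - \frac{r\alpha}{4}\, s^{\alpha/2},
\]
which is a degree-one polynomial in $s^{\alpha/2}$, giving the claimed form with $\tilde P_{\alpha,1,r}$ of degree $1$.

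For the inductive step, assume the representation holds at level $k$. Differentiate the right-hand side using the product/quotient rule, treating $u(s)=s^{(\alpha-1)/2-k}$, $v(s)=\tilde P_{\alpha,k,r}(s^{\alpha/2})$, and $w(s)=(2+s^{\alpha/2})^{r/2+k}$ separately. Each of $u'$, $v'$, $w'$ produces an extra factor of $s^{-1}$ or $s^{\alpha/2-1}$; writing $t=s^{\alpha/2}$ and carefully factoring $s^{(\alpha-1)/2-(k+1)}$ out of the three resulting terms, the whole expression collapses to
\[
\tilde\psi_{\alpha,r}^{(k+1)}(s) = \frac{s^{(\alpha-1)/2-(k+1)}}{(2+t)^{r/2+k+1}}\Bigl[\bigl(\tfrac{\alpha-1}{2}-k\bigr)\tilde P_{\alpha,k,r}(t)(2+t) + \tfrac{\alpha}{2}\,t\,\tilde P_{\alpha,k,r}'(t)(2+t) - \tfrac{\alpha}{2}\bigl(\tfrac{r}{2}+k\bigr)t\,\tilde P_{\alpha,k,r}(t)\Bigr].
\]
The bracketed expression is a polynomial in $t$; checking the degrees of its three summands ($k+1$, $k+1$, and $k+1$ respectively), it is at most of degree $k+1$, which gives the inductive definition of $\tilde P_{\alpha,k+1,r}$ and the claimed form.

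There is no real obstacle here; the only minor care needed is to track degrees through the product with $(2+t)$ and the derivative $\tilde P_{\alpha,k,r}'(t)$ (which drops the degree by one before being multiplied by $t(2+t)$). The coefficients of $\tilde P_{\alpha,k,r}$ depend on $\alpha$, $r$, and $k$ as asserted, since the recursion for the coefficients only involves these parameters together with those of the previous polynomial.
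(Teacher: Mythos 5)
Your induction is correct and is exactly the ``standard recursive argument'' that the paper invokes without writing out; the base case and the recursion $\tilde P_{\alpha,k+1,r}(t)=\bigl(\tfrac{\alpha-1}{2}-k\bigr)(2+t)\tilde P_{\alpha,k,r}(t)+\tfrac{\alpha}{2}t(2+t)\tilde P_{\alpha,k,r}'(t)-\tfrac{\alpha}{2}\bigl(\tfrac{r}{2}+k\bigr)t\,\tilde P_{\alpha,k,r}(t)$ both check out. The only cosmetic remark is that ``degree $k$'' should be read as ``degree at most $k$'' (the leading coefficient can vanish, e.g.\ for $k=1$ at $r=r_{\min}(\alpha)$), which is all that is used downstream and which your ``at most'' phrasing already covers.
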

 
 \begin{proof}
 The proof follows from a standard recursive argument. 
 \end{proof}
 
\begin{lem}\label{lem:simple2}
Let $\psi$ be a real-valued function infinitely differentiable on $(0,+\infty)$.~Then, for all $k \geq 1$, all $j \in \{1, \dots, d\}$ and all $\xi \in \bbr^d$ such that $\xi \ne 0$,
\begin{align}\label{eq:simple2}
\partial_{\xi_j}^k \left(\psi\left(\|\xi\|^2\right)\right) = \sum_{p=0}^{\lfloor \frac{k}{2} \rfloor} C_p(k) \left(\xi_j\right)^{k-2p} \psi^{(k-p)}(\|\xi\|^2),
\end{align}
for some $C_p(k)>0$ only depending on $p$ and on $k$. 
\end{lem}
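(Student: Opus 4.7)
The plan is to prove the identity by induction on $k$, exploiting the fact that $\|\xi\|^2 = \sum_{\ell=1}^d \xi_\ell^2$ has the simple property $\partial_{\xi_j}(\|\xi\|^2) = 2\xi_j$, while its higher derivatives in the single variable $\xi_j$ are $\partial_{\xi_j}^2(\|\xi\|^2) = 2$ and $\partial_{\xi_j}^m(\|\xi\|^2) = 0$ for $m \geq 3$. This simple structure is what makes the expansion \eqref{eq:simple2} collapse into a sum over at most $\lfloor k/2 \rfloor + 1$ terms rather than the much bigger Faà di Bruno formula.

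For the base case $k=1$, the chain rule immediately yields $\partial_{\xi_j}(\psi(\|\xi\|^2)) = 2\xi_j \psi^{(1)}(\|\xi\|^2)$, which matches \eqref{eq:simple2} with $C_0(1) = 2 > 0$. For the induction step, assuming the identity holds at level $k$, I would differentiate each summand using the Leibniz rule:
\begin{align*}
\partial_{\xi_j}\!\left(C_p(k) \xi_j^{k-2p} \psi^{(k-p)}(\|\xi\|^2)\right) &= C_p(k)(k-2p)\xi_j^{k-2p-1}\psi^{(k-p)}(\|\xi\|^2) \\
&\quad + 2 C_p(k) \xi_j^{k-2p+1}\psi^{(k-p+1)}(\|\xi\|^2).
\end{align*}
Reindexing the first piece by $q = p+1$ and collecting terms with a common factor $\xi_j^{k+1-2p}\psi^{(k+1-p)}(\|\xi\|^2)$ would produce the recursion $C_p(k+1) = 2 C_p(k) + (k-2p+2)C_{p-1}(k)$, with the convention that $C_{-1}(k) = 0$ and with the boundary term at $p = \lfloor k/2\rfloor + 1$ handled separately.

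The only slightly delicate point is verifying that the new index range $0 \leq p \leq \lfloor (k+1)/2 \rfloor$ is exactly what the recursion produces. When $k = 2m$ is even, the putative top term at $p = m+1$ carries a factor $k - 2(m+1) + 2 = 0$ and therefore vanishes, so no index exceeds $\lfloor (k+1)/2\rfloor = m$. When $k = 2m+1$ is odd, the term at $p = m+1$ survives with coefficient $C_m(k)(k-2m) = C_m(k) > 0$, and $\lfloor(k+1)/2\rfloor = m+1$, so again the index range is exactly the claimed one.

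Finally, positivity of all constants $C_p(k)$ follows immediately from the recursion: both $2 C_p(k)$ and $(k-2p+2)C_{p-1}(k)$ are nonnegative with strictly positive contributions whenever they appear in the effective range, so an easy secondary induction shows $C_p(k) > 0$ for every admissible pair $(p,k)$. I do not anticipate any serious obstacle; the only thing to get right is the bookkeeping around the endpoint $p = \lfloor k/2 \rfloor + 1$ according to the parity of $k$.
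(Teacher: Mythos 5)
Your proposal is correct and follows essentially the same route as the paper: induction on $k$, differentiating each summand via the Leibniz rule using $\partial_{\xi_j}(\|\xi\|^2)=2\xi_j$ and $\partial_{\xi_j}^2(\|\xi\|^2)=2$, reindexing, and checking the endpoint of the summation range according to the parity of $k$. Your explicit recursion $C_p(k+1)=2C_p(k)+(k-2p+2)C_{p-1}(k)$ is a clean way of packaging the bookkeeping the paper carries out termwise, and the positivity and range checks are all accurate.
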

 
\begin{proof}
The proof relies on a classical recursive argument. First, observe that, for all $j \in \{1, \dots, d\}$ and all $\xi \in \bbr^d$ such that $\xi \ne 0$,
\begin{align*}
\partial_{\xi_j} \left(\psi\left(\|\xi\|^2\right)\right) = 2 \xi_j \psi^{(1)}\left(\|\xi\|^2\right). 
\end{align*}
Now, assume that the equality \eqref{eq:simple2} is true for some fixed integer $k \geq 1$. 
Then, for all $j \in \{1, \dots, d \}$ and all $\xi \in \bbr^d$ with $\xi \ne 0$,
\begin{align*}
\partial_{\xi_j}^{k+1} \left(\psi\left(\|\xi\|^2\right)\right)  &=  \partial_{\xi_j} \left( \sum_{p=0}^{\lfloor \frac{k}{2} \rfloor} C_p(k) \left(\xi_j\right)^{k-2p} \psi^{(k-p)}(\|\xi\|^2)\right), \\
&= \sum_{p=0}^{\lfloor \frac{k}{2} \rfloor} C_p(k) \partial_{\xi_j} \left(\left(\xi_j\right)^{k-2p} \psi^{(k-p)}(\|\xi\|^2)\right) . 
\end{align*}
Next, assume that $k$ is even, say, $k =2m$, for some integer $m \geq 1$.  
Then, for all $j \in \{1, \dots, d \}$ and all $\xi \in \bbr^d$, $\xi \ne 0$,
\begin{align*}
\partial_{\xi_j}^{k+1} \left(\psi\left(\|\xi\|^2\right)\right)  &= \sum_{p=0}^{m} C_p(2m) \partial_{\xi_j} \left(\left(\xi_j\right)^{2m-2p} \psi^{(2m-p)}(\|\xi\|^2)\right), \\
& = C_m(2m) \partial_{\xi_j} \left(\psi^{(m)}\left(\|\xi\|^2\right)\right) + \sum_{p=0}^{m-1} C_p(2m)\partial_{\xi_j} \left(\left(\xi_j\right)^{2m-2p} \psi^{(2m-p)}(\|\xi\|^2)\right) , \\
& = C_m(2m) 2 \xi_j \psi^{(m+1)} \left(\|\xi\|^2\right) + \sum_{p=0}^{m-1} C_p(2m) \bigg( (2m-2p) (\xi_j)^{2m-2p-1} \psi^{(2m-p)}(\|\xi\|^2) \\
&\qquad\qquad\qquad +2 (\xi_j)^{2m-2p+1} \psi^{(2m-p+1)}\left(\|\xi\|^2\right) \bigg), \\
& = C_m(2m) 2 \xi_j \psi^{(m+1)} \left(\|\xi\|^2\right) + \sum_{p=0}^{m-1} C_p(2m) \bigg( (2m-2p) (\xi_j)^{2m+1-2(p+1)} \psi^{(2m-p)}(\|\xi\|^2) \\
&\qquad\qquad\qquad+2 (\xi_j)^{2m+1-2p} \psi^{(2m+1-p)}\left(\|\xi\|^2\right) \bigg), \\
& = C_m(2m) 2 \xi_j \psi^{(m+1)} \left(\|\xi\|^2\right) + \sum_{\ell=1}^{m} C_{\ell-1}(2m) (2m-2(\ell-1)) (\xi_j)^{2m+1-2\ell} \psi^{(2m+1-\ell)}(\|\xi\|^2)\\
&\qquad\qquad\qquad + \sum_{p=0}^{m-1} 2 C_p(2m) (\xi_j)^{2m+1-2p} \psi^{(2m+1-p)}\left(\|\xi\|^2\right). 
\end{align*}
Rearranging the sums, for all $j \in \{1, \dots, d \}$ and all $\xi \in \bbr^d$, $\xi \ne 0$,
\begin{align*}
\partial_{\xi_j}^{k+1} \left(\psi\left(\|\xi\|^2\right)\right)  & = \left(2 C_m(2m) + 2C_{m-1}(2m)\right)\xi_j \psi^{(m+1)} \left(\|\xi\|^2\right) \\
&\qquad\qquad +  \sum_{\ell=1}^{m-1} C_{\ell-1}(2m) (2m-2(\ell-1)) (\xi_j)^{2m+1-2\ell} \psi^{(2m+1-\ell)}(\|\xi\|^2) \\
&\qquad\qquad +  \sum_{p=1}^{m-1} 2 C_p(2m) (\xi_j)^{2m+1-2p} \psi^{(2m+1-p)}\left(\|\xi\|^2\right)\\
&\qquad\qquad + 2C_0(2m) (\xi_j)^{2m+1} \psi^{(2m+1)}\left(\|\xi\|^2\right) . 
\end{align*}
Thus, since $k+1=2m+1$ and $\lfloor \frac{k+1}{2} \rfloor = m$, the result follows for $k$ even, 
while a similar reasoning ensures the result for $k$ odd. This concludes the proof of the lemma. 
\end{proof}
\noindent
\begin{lem}\label{lem:double_truncation}
Let $\alpha \in (1,2)$, let $\nu_\alpha$ be a non-degenerate symmetric L\'evy measure on $\bbr^d$ satisfying \eqref{eq:scale} , let $\mu_\alpha$ be the associated $\alpha$-stable probability measure on $\bbr^d$ given by \eqref{def:stable}, let $D^{\alpha-1}$ be defined by \eqref{eq:FracGrad} and let $\mathcal{A}_\alpha$ be defined by \eqref{eq:Stheatgen}.  Then, for all $f \in \mathcal{S}(\bbr^d)$ and all $x \in \bbr^d$, 
\begin{align*}
D^{\alpha-1} \circ \left(- \mathcal{A}_\alpha\right)^{- \frac{\alpha-1}{\alpha}}(f)(x) = \frac{1}{2\Gamma\left(1 - \frac{1}{\alpha}\right)} \int_{\bbr^d} y \tilde{T}_{\alpha,r_{\min}(\alpha)}^y(f)(x) \mu_\alpha(dy),
\end{align*}
with, for all $y \in \bbr^d$,
\begin{align*}
\tilde{T}_{\alpha,r_{\min}(\alpha)}^y(f)(x) = \int_{0}^{+\infty} \left(f(x+t^{\frac{1}{\alpha}}y) - f(x-t^{\frac{1}{\alpha}}y)\right) \frac{dt}{t}, \quad r_{\min}(\alpha) = \frac{2}{\alpha}(\alpha-1).
\end{align*}
\end{lem}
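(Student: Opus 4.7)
The plan is to follow the derivation of \eqref{eq:Riesz_StableH} in Proposition \ref{prop:Riesz_Representation} but with the damped gamma transform $(E-\mathcal{A}_\alpha)^{-r/2}$ replaced by the homogeneous one $(-\mathcal{A}_\alpha)^{-r/2}$, specialized to the critical index $r = r_{\min}(\alpha) = 2(\alpha-1)/\alpha$. The three ingredients are: (i) an integral representation of $(-\mathcal{A}_\alpha)^{-(\alpha-1)/\alpha}$ via the semigroup $(P^\alpha_t)_{t\geq 0}$; (ii) the Bismut formula \eqref{eq:bismut_stable_heat}; and (iii) a Fubini exchange, which is where the main technical difficulty lies.

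For (i), I would define, for $f\in \mathcal{S}(\bbr^d)$,
\begin{align*}
(-\mathcal{A}_\alpha)^{-(\alpha-1)/\alpha}(f) := \frac{1}{\Gamma\left((\alpha-1)/\alpha\right)}\int_0^{+\infty} t^{-1/\alpha}\, P^\alpha_t(f)\, dt,
\end{align*}
and check absolute convergence of this Bochner integral: the exponent $-1/\alpha$ lies in $(-1,0)$, so $t^{-1/\alpha}$ is integrable near $0$, while for large $t$ the scaling bound $\sup_{x \in \bbr^d}|P^\alpha_t(f)(x)| \leq C\|f\|_{L^1(\bbr^d,dx)}\, t^{-d/\alpha}$, coming from the boundedness and $\alpha$-scaling of the stable density, controls the tail. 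Identification with the Fourier multiplier of symbol $\psi_\alpha(\xi)^{-(\alpha-1)/\alpha}$, where $\psi_\alpha(\xi) := \int_{\mathbb{S}^{d-1}} |\langle y;\xi\rangle|^\alpha \lambda_1(dy)$, then follows by Fourier inversion together with the elementary identity $\int_0^{+\infty} t^{-1/\alpha} e^{-t \psi_\alpha(\xi)}dt = \Gamma((\alpha-1)/\alpha)\, \psi_\alpha(\xi)^{-(\alpha-1)/\alpha}$, which is meaningful away from the origin thanks to the non-degeneracy \eqref{eq:non_degeneracy}.

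For (ii), I would apply $D^{\alpha-1}$ inside the integral, invoke the Bismut formula \eqref{eq:bismut_stable_heat}, and use the symmetry $X_\alpha =_{\mathcal{L}} -X_\alpha$ exactly as in the proof of Proposition \ref{prop:Riesz_Representation}, producing the antisymmetrised expression
\begin{align*}
D^{\alpha-1}\circ (-\mathcal{A}_\alpha)^{-(\alpha-1)/\alpha}(f)(x) = \frac{1}{2\Gamma(1-1/\alpha)}\int_0^{+\infty} \frac{dt}{t}\int_{\bbr^d} y\,\bigl(f(x+t^{1/\alpha}y)-f(x-t^{1/\alpha}y)\bigr)\mu_\alpha(dy),
\end{align*}
in which the product $t^{-1/\alpha}\cdot t^{-(1-1/\alpha)} = t^{-1}$ produces precisely the kernel of $\tilde T^y_{\alpha,r_{\min}(\alpha)}$.

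The hard part is (iii): with the damping $e^{-t}$ now absent, the Fubini exchange requires a uniform size estimate on the inner integrand. I would bound
\begin{align*}
\int_{\bbr^d}\int_0^{+\infty}\|y\|\cdot\frac{|f(x+t^{1/\alpha}y)-f(x-t^{1/\alpha}y)|}{t}\, dt\, \mu_\alpha(dy)
\end{align*}
by splitting the $t$-integral at the natural scale $t_\ast=\|y\|^{-\alpha}$. On $(0,t_\ast]$, the mean value bound $|f(x+t^{1/\alpha}y)-f(x-t^{1/\alpha}y)|\leq 2\sup_{z \in \bbr^d}\|\nabla f(z)\|\, t^{1/\alpha}\|y\|$ gives a contribution bounded by $2\alpha \sup_{z \in \bbr^d}\|\nabla f(z)\|$, independently of $y$. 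On $[t_\ast,+\infty)$, the Schwartz decay $|f(z)|\leq C_{N,x}(1+\|z-x\|)^{-N}$ yields $|f(x\pm t^{1/\alpha}y)|\leq C_{N,x}(t^{1/\alpha}\|y\|)^{-N}$ for any $N \geq 1$, and a direct computation shows that the resulting contribution is bounded by a constant depending on $f$, $x$ and $N$ but not on $y$. The outer integral is then controlled by a constant times $\int_{\bbr^d}\|y\|\mu_\alpha(dy)<+\infty$, finiteness being guaranteed by $\alpha>1$. This justifies the Fubini exchange and yields the stated identity.
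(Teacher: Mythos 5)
Your argument is correct, but it takes a genuinely different route from the paper's. The paper proves the identity on the Fourier side: it must show that the symbol $\frac{i}{\Gamma(1-1/\alpha)}\int_{\bbr^d} y\,\rho_\alpha(\langle y;\xi\rangle)\,\mu_\alpha(dy)$ equals $\tau_\alpha(\xi)\big/\big(\int_{\mathbb{S}^{d-1}}|\langle\xi;u\rangle|^\alpha\lambda_1(du)\big)^{(\alpha-1)/\alpha}$, and since the sine integral $\rho_\alpha$ is only conditionally convergent and $\mu_\alpha$ has no moments of order $\geq\alpha$, it resorts to a double truncation --- cutting the time integral at $R$ and the L\'evy measure at level $m$, proving a uniform moment bound for the truncated laws via a compound-Poisson decomposition, then passing to the limit in $m$ (dominated convergence, using the exponential decay of $\hat{\mu}_\alpha(t^{1/\alpha}\xi)$) and finally in $R$ (using $\sup_{R\geq 1}\|\rho_{\alpha,R}\|_{\infty,\bbr}<+\infty$). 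You instead stay on the physical side: you realize the homogeneous potential as an absolutely convergent Bochner integral of $P^\alpha_t$, apply \eqref{eq:bismut_stable_heat} and the symmetry of $\mu_\alpha$, and justify a single Fubini exchange by splitting the time integral at $t_\ast=\|y\|^{-\alpha}$, where the antisymmetrisation gives the $O(t^{1/\alpha}\|y\|)$ bound near $t=0$ and the Schwartz decay of $f$ handles $t\geq t_\ast$, both contributions being uniform in $y$ so that only $\int_{\bbr^d}\|y\|\mu_\alpha(dy)<+\infty$ is needed. This is shorter and avoids both truncations precisely because the test function, rather than the oscillation of the sine, supplies the integrability; the price is that you obtain the operator identity on $\mathcal{S}(\bbr^d)$ rather than the pointwise identity of multipliers, though the two are equivalent for the use made of the lemma in Corollary \ref{cor_fractional_homo_Riesz_transform}. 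The one step you leave implicit is the interchange of $D^{\alpha-1}$ (itself an integral against $\nu_\alpha$) with the time integral defining $(-\mathcal{A}_\alpha)^{-(\alpha-1)/\alpha}$; this follows routinely from the bounds $\|P^\alpha_t(f)\|_\infty\leq C\min(1,t^{-d/\alpha})$ and $\|\nabla P^\alpha_t(f)\|_\infty\leq C\min(1,t^{-d/\alpha})$ that you already have, and is at the same level of implicit rigor as the paper's own Proposition \ref{prop:Riesz_Representation}.
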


\begin{proof}
Let $\alpha \in (1,2)$, let $\lambda_1$ be a finite non-negative symmetric measure on $\mathbb{S}^{d-1}$ such that
\begin{align*}
c_0(\alpha,d) : = \underset{y \in \mathbb{S}^{d-1}}{\operatorname{inf}} \int_{\mathbb{S}^{d-1}} |\langle y ; u \rangle|^\alpha \lambda_1(du) > 0,
\end{align*}
and let $\nu_\alpha$ be the corresponding $\alpha$-stable L\'evy measure on $\bbr^d$.  The operator $D^{\alpha-1} \circ \left(- \mathcal{A}_\alpha\right)^{-r_{\min}(\alpha)/2}$ is defined, in the Fourier domain,  for all $f \in \mathcal{S}(\bbr^d)$ and all $\xi \in \bbr^d$ with $\xi \ne 0$, by
\begin{align*}
\mathcal{F}\left(D^{\alpha-1} \circ \left(- \mathcal{A}_\alpha\right)^{-\frac{r_{\min}(\alpha)}{2}}(f)\right)(\xi) = \dfrac{\tau_\alpha(\xi)}{\left(\int_{\mathbb{S}^{d-1}} |\langle \xi ; u \rangle|^\alpha \lambda_1(du)\right)^{\frac{\alpha-1}{\alpha}}} \mathcal{F}(f)(\xi),
\end{align*}
with $r_{\min}(\alpha) = 2(\alpha-1)/\alpha$.  Now,  let $R>0$ and let $\rho_\alpha$ and $\rho_{\alpha,R}$ be defined, for all $x \in \bbr$,  by
\begin{align*}
\rho_\alpha(x)= \int_0^{+\infty} \frac{dt}{t} \sin \left(t^{\frac{1}{\alpha}}x\right), \quad   \rho_{\alpha,R}(x)= \int_0^{R} \frac{dt}{t} \sin \left(t^{\frac{1}{\alpha}} x\right).
\end{align*}
By a simple integration and Fubini's theorem, both functions are well-defined and uniformly bounded on $\bbr$.  Moreover,  by definition, for all $x \in \bbr$,
\begin{align*}
\underset{R \rightarrow +\infty}{\lim} \rho_{\alpha,R}(x)=\rho_\alpha(x).
\end{align*}
Thus,  since $\alpha \in (1,2)$, one can define the following symbol on $\bbr^d$: for all $\xi \in \bbr^d$ and all $R>0$, 
\begin{align*}
m_{\alpha,R}(\xi) = \dfrac{i}{\Gamma\left(\frac{r_{\min}(\alpha)}{2}\right)} \int_{\bbr^d} y  \rho_{\alpha,R}(\langle y ; \xi \rangle) \mu_\alpha(dy). 
\end{align*}
Now, let $(\mu_{\alpha,m})_{m \geq 2}$ be the sequence of probability measures on $\bbr^d$ characterized, for all $\xi \in \bbr^d$ and all $m \geq 2$, by
\begin{align*}
\hat{\mu}_{\alpha,m}(\xi) = \exp \left( \int_{\bbr^d} \left(e^{i \langle u ;\xi \rangle}-1- i \langle u ;\xi \rangle\right) \nu_{\alpha,m}(du) \right),
\end{align*}
where $\nu_{\alpha,m}(du) = \bbone_{(0,m)}(\|u\|) \nu_\alpha(du)$ and let $(m_{\alpha,R,m})_{m \geq 2}$ be the corresponding sequence of symbols.~Note that the probability measures $\mu_{\alpha,m}$, $m \geq 2$, are infinitely divisible and have finite moments of any order and that the sequence $(\mu_{\alpha,m})_{m \geq 2}$ converges weakly to $\mu_\alpha$. Next,  by a direct application of Fubini's theorem and standard computations, for all $\xi \in \bbr^d$, all $R>0$ and all $m \geq 2$, 
\begin{align*}
m_{\alpha,R,m}(\xi) & = \dfrac{i}{\Gamma\left(1 - \frac{1}{\alpha}\right)} \int_{\bbr^d} y \rho_{\alpha,R}(\langle y;\xi \rangle) \mu_{\alpha,m}(dy) , \\
& = \dfrac{i}{\Gamma\left(1 - \frac{1}{\alpha}\right)} \int_0^R  \dfrac{1}{t} \left( \int_{\bbr^d} y \sin\left(t^{\frac{1}{\alpha}} \langle y ; \xi \rangle \right)  \mu_{\alpha,m}(dy) \right) dt , \\
& = \dfrac{1}{\Gamma(1- \frac{1}{\alpha})} \int_0^R \hat{\mu}_{\alpha,m} \left(t^{\frac{1}{\alpha}} \xi\right) \left( \int_{\bbr^d} u \left(e^{i t^{\frac{1}{\alpha}} \langle u ; \xi \rangle}-1\right) \nu_{\alpha,m}(du) \right) \frac{dt}{t}.
\end{align*}
Now, for all $t \in (0,R)$ and all $\xi \in \bbr^d$, 
\begin{align*}
\underset{ m \longrightarrow +\infty}{\lim}  \hat{\mu}_{\alpha,m} \left(t^{\frac{1}{\alpha}} \xi\right) \left( \int_{\bbr^d} u \left(e^{i t^{\frac{1}{\alpha}} \langle u ; \xi \rangle}-1\right) \nu_{\alpha,m}(du) \right) = \hat{\mu}_{\alpha} \left(t^{\frac{1}{\alpha}} \xi\right) \left( \int_{\bbr^d} u \left(e^{i t^{\frac{1}{\alpha}} \langle u ; \xi \rangle}-1\right) \nu_{\alpha}(du) \right).
\end{align*}
Moreover,  for all $m \geq 2$, all $t \in (0,R)$ and all $\xi \in \bbr^d$ with $\xi \ne 0$, 
\begin{align*}
\frac{1}{t}\left\| \int_{\mathcal{B}(0,m)} u \left(e^{i t^{\frac{1}{\alpha}} \langle u ; \xi \rangle}-1\right) \nu_{\alpha,m}(du)\right\| \left| \hat{\mu}_{\alpha,m} \left(t^{\frac{1}{\alpha}}\xi\right)\right| \leq \frac{1}{t^{\frac{1}{\alpha}}} \int_{\bbr^d} \|u\| \left| e^{i \langle u ; \xi \rangle} - 1 \right| \nu_\alpha(du).
\end{align*}
Thus, by the Lebesgue dominated convergence theorem, 
\begin{align*}
\underset{m \longrightarrow +\infty}{\lim} m_{\alpha,R,m}(\xi) = \dfrac{1}{\Gamma(1- \frac{1}{\alpha})} \int_0^R \hat{\mu}_{\alpha} \left(t^{\frac{1}{\alpha}} \xi\right) \tau_\alpha(\xi) \frac{dt}{t^{\frac{1}{\alpha}}}.
\end{align*}
Moreover, by standard arguments and since 
\begin{align}\label{eq:uniform_first_moment}
\underset{m \geq 2}{\sup} \int_{\bbr^d} \|y\|^\beta \mu_{\alpha,m}(dy) <+\infty,  \quad \beta \in [1,\alpha),
\end{align}
for all $\xi \in \bbr^d$ and all $R>0$, 
\begin{align*}
\underset{m \longrightarrow +\infty}{\lim} m_{\alpha,R,m}(\xi) = \dfrac{i}{\Gamma\left(1 - \frac{1}{\alpha}\right)} \int_{\bbr^d} y \rho_{\alpha,R}(\langle y;\xi \rangle) \mu_{\alpha}(dy).
\end{align*}
Note that \eqref{eq:uniform_first_moment} follows from the classical decomposition contained, e.g., in \cite[Lemma $1.1$]{MR01}.  Indeed, let us detail the argument. Let $X_{\alpha,m}$ be a random vector of $\bbr^d$ with characteristic function given, for all $\xi \in \bbr^d$ and all $m \geq 2$, by
\begin{align*}
\bbe e^{i \langle \xi; X_{\alpha,m} \rangle} = \exp\left(\int_{\bbr^d}\left(e^{i \langle u ;\xi \rangle} - 1 - i \langle u ; \xi \rangle\right) \nu_{\alpha,m}(du) \right).
 \end{align*}
 Since $m\geq 2$, the following representation in law holds true:
 \begin{align*}
X_{\alpha,m} =_{\mathcal{L}} Y_{\alpha,m} + Z_{\alpha,m},
\end{align*}
with $Y_{\alpha,m}$ and $Z_{\alpha,m}$ independent random vectors of $\bbr^d$ characterized, for all $\xi \in \bbr^d$ and all $m \geq 2$,  by
\begin{align*}
& \bbe e^{i \langle \xi; Y_{\alpha,m} \rangle} = \exp\left(\int_{\mathcal{B}(0,1)}\left(e^{i \langle u ;\xi \rangle} - 1 - i \langle u ; \xi \rangle\right) \nu_{\alpha,m}(du) \right), \\
& \bbe e^{i \langle \xi; Z_{\alpha,m} \rangle} = \exp\left(\int_{\mathcal{B}(0,1)^c}\left(e^{i \langle u ;\xi \rangle} - 1 - i \langle u ; \xi \rangle\right) \nu_{\alpha,m}(du) \right),
\end{align*}
where $\mathcal{B}(0,1)$ is the Euclidean open unit ball of $\bbr^d$. Now, since $m \geq 2$, $Y_{\alpha,m} =_{\mathcal{L}} Y_{\alpha,1}$.  Moreover, let $\ell_m = \nu_{\alpha,m}( u \in \bbr^d:\, \|u\| \geq 1)$ and let $(W^m_i)_{i \geq 1 }$ be a sequence of independent and identically distributed random vectors of $\bbr^d$ such that, for all $A \in \mathcal{B}(\bbr^d)$, 
\begin{align*}
\mathbb{P} \left(W^m_1 \in A\right) = \dfrac{\nu_{\alpha,m}( A \cap \{u \in \bbr^d:\, \|u\| \geq 1\} )}{\ell_m}.
\end{align*}
Finally, let $N_m$ be a Poisson random variable with mean $\ell_m$ independent of $\left(W_i^m\right)_{i \geq 1}$. Then, 
\begin{align*}
Z_{\alpha,m} =_{\mathcal{L}} \sum_{i = 1}^{N_m} W^m_i - \ell_m \bbe W_1^m.
\end{align*}
First, observe that, for all $m \geq 2$,
\begin{align*}
\ell_m \bbe W_1^m = \int_{\|x\| \geq 1} x \nu_{\alpha, m}(dx) = 0 
\end{align*}
since $\lambda_1$ is symmetric.  Then, let $\beta \in [1,\alpha)$. By Minkowski's inequality,
\begin{align*}
\|X_{\alpha,m}\|_{\beta} \leq \|Y_{\alpha,1}\|_{\beta} + \|Z_{\alpha,m}\|_{\beta} \leq \|Y_{\alpha,1}\|_{\beta} + \left\| \sum_{i = 1}^{N_m} W^m_i \right\|_{\beta},
\end{align*}
where $\|X\|_{\beta} = \left(\bbe \|X\|^\beta\right)^{\frac{1}{\beta}}$, for all $X$ random vector of $\bbr^d$.  Finally, 
\begin{align*}
\left\| \sum_{i = 1}^{N_m} W^m_i \right\|^\beta_{\beta} = \bbe \left\|\sum_{i = 1}^{N_m} W^m_i \right\|^\beta = \sum_{k = 0}^{+\infty} \dfrac{\ell_m^k}{k!} e^{-\ell_m} \bbe \left\|\sum_{i = 1}^{k} W^m_i \right\|^\beta.
\end{align*}
Now, by convexity (since $\beta \in [1,\alpha)$),  for all $k \geq 1$, 
\begin{align*} 
\bbe \left\|\sum_{i = 1}^{k} W^m_i \right\|^\beta & \leq k^{\beta-1} \bbe \sum_{i = 1}^k \|W_i^m\|^\beta = k^\beta \bbe \|W_1^m\|^\beta = \dfrac{k^\beta}{\ell_m} \int_{\|u\|\geq 1} \|u\|^\beta \nu_{\alpha,m}(du) , \\
& \leq \dfrac{k^\beta}{\ell_m} \int_{\|u\| \geq 1} \|u\|^\beta \nu_\alpha(du).
\end{align*}
Thus, for all $k \geq 1$, 
\begin{align*}
\dfrac{\ell_m^k}{k!} e^{-\ell_m} \bbe \left\|\sum_{i = 1}^{k} W^m_i \right\|^\beta & \leq e^{- \ell_m}\dfrac{ \ell_m^{k-1} k^\beta}{k!} \int_{\|u\| \geq 1} \|u\|^\beta \nu_\alpha(du) , \\
& \leq \dfrac{ \ell^{k-1} k^\beta}{k!} \int_{\|u\| \geq 1} \|u\|^\beta \nu_\alpha(du),
\end{align*}
since $0 < \ell_m \leq \ell = \nu_{\alpha}( u \in \bbr^d:\, \|u\| \geq 1)$. The right-hand side of the previous inequality is uniform in $m$ and is the general term of a convergent series.  Thus, 
\begin{align*}
\sup_{m \geq 2} \|X_{\alpha,m}\|_{\beta} < +\infty.
\end{align*}
So, for all $\xi \in \bbr^d$ with $\xi \ne 0$ and all $R>0$,
\begin{align}\label{eq:truncation_R}
\dfrac{i}{\Gamma\left(1 - \frac{1}{\alpha}\right)} \int_{\bbr^d} y \rho_{\alpha,R}(\langle y;\xi \rangle) \mu_{\alpha}(dy) = \dfrac{1}{\Gamma(1- \frac{1}{\alpha})} \int_0^R \hat{\mu}_{\alpha} \left(t^{\frac{1}{\alpha}} \xi\right) \tau_\alpha(\xi) \frac{dt}{t^{\frac{1}{\alpha}}}.
\end{align}
In order to conclude the proof, let us pass to the limit ($R \rightarrow +\infty$) in the previous equality.  Thanks to the L\'evy's representation of the characteristic function $\hat{\mu}_{\alpha}$,  for all $\xi \in \bbr^d$ and all $t >0$,
\begin{align*}
\hat{\mu}_{\alpha} \left(t^{\frac{1}{\alpha}} \xi\right) = \exp \left( - t \int_{\mathbb{S}^{d-1}} |\langle \xi ; y \rangle|^\alpha \lambda_1(dy)\right).
\end{align*}
Since $\nu_\alpha$ is non-degenerate,  for all $\xi \in \bbr^d$ and all $t>0$ 
\begin{align*}
\left| \hat{\mu}_{\alpha} \left(t^{\frac{1}{\alpha}} \xi\right)\right| \leq \exp \left(- t c_0(\alpha,d) \|\xi\|^\alpha\right).
\end{align*}
Thus, by the Lebesgue dominated convergence theorem, for all $\xi \in \bbr^d$ with $\xi \ne 0$
\begin{align*}
\underset{R \rightarrow +\infty}{\lim} \int_0^R \hat{\mu}_{\alpha} \left(t^{\frac{1}{\alpha}} \xi\right) \tau_\alpha(\xi) \frac{dt}{t^{\frac{1}{\alpha}}} & = \int_{0}^{+\infty} \exp \left( - t \int_{\mathbb{S}^{d-1}} |\langle \xi;y \rangle|^\alpha \lambda_1(dy)\right) \frac{dt}{t^{\frac{1}{\alpha}}} \tau_\alpha(\xi) \\
& = \Gamma\left(1 - \frac{1}{\alpha}\right)\dfrac{\tau_\alpha(\xi)}{\left(\int_{\mathbb{S}^{d-1}} |\langle \xi ; u \rangle|^\alpha \lambda_1(du)\right)^{\frac{\alpha-1}{\alpha}}}.
\end{align*}
Finally, for the left-hand side of \eqref{eq:truncation_R},  note that 
\begin{align}\label{eq:uniform_R}
\underset{R \rightarrow +\infty}{\lim} \rho_{\alpha,R}(x) = \rho_\alpha(x), \quad x\in \bbr^d,\quad \underset{R \geq 1}{\sup} \, \| \rho_{\alpha,R}\|_{\infty,\bbr} <+\infty.
\end{align}
The uniform bound follows from \cite[Exercice $4.1.1$ page $263$]{G08}. Then, for all $\xi \in \bbr^d$ with $\xi \ne 0$,
\begin{align*}
\dfrac{i}{\Gamma\left(1 - \frac{1}{\alpha}\right)} \int_{\bbr^d} y \rho_{\alpha}(\langle y;\xi \rangle) \mu_{\alpha}(dy) = \dfrac{\tau_\alpha(\xi)}{\left(\int_{\mathbb{S}^{d-1}} |\langle \xi ; u \rangle|^\alpha \lambda_1(du)\right)^{\frac{\alpha-1}{\alpha}}}.
\end{align*}
This concludes the proof of the lemma. 
\end{proof}
\noindent
To end this Appendix, let us provide two results on the ``carr\'e de Mehler". These results are partially based on the setting and the results obtained in \cite{Peszat} to represent the stable OU semigroup as a second quantized operator on a Poisson Fock space and used to study its algebraic and spectral properties. For the sake of completeness, let us recall briefly this setting (see also \cite{LP11}): let $(\Omega, \mathcal{F},\mathbb{P})$ be a probability space and let $\mathbb{Z}_+(E)$ be the space of integer-valued $\sigma$-finite measures on $(E, \mathcal{B})$ endowed with the $\sigma$-field $\mathcal{G}$ making the mappings $\xi \mapsto \xi(B)$ measurable for all $B \in \mathcal{B}$. Then, let $\Pi$ be a Poisson random measure on $E$ with $\sigma$-finite intensity measure $\lambda$. $\Pi$ induces a probability measure $\bbp_\Pi$ on $\left(\mathbb{Z}_+(E),\mathcal{G}\right)$ and a sigma field $\sigma(\Pi)$ on $\Omega$. Next, let $L^2(\bbp_\Pi)$ be the space of equivalence classes of measurable real-valued functionals such that $ \bbe F^2(\Pi)<+\infty$. For $y \in E$, the difference operator $D_y$ is defined, for all measurable $F: \mathbb{Z}_+(E) \rightarrow \bbr$ and all $\xi \in \mathbb{Z}_+(E)$, by $ D_y(F)(\xi):=F(\xi +\delta_y)-F(\xi)$, where $\delta_y$ is the Dirac measure at point $y\in E$. For all $n\geq 2$ and all $(y_1,\dots, y_n) \in E^n$, let us define the iterated difference operator $D^{n}_{y_1,\dots, y_n}$ by the following formula, for all measurable $F: \mathbb{Z}_+(E) \rightarrow \bbr$ and all $\xi \in \mathbb{Z}_+(E)$, $D^n_{y_1,\dots, y_n}(F)(\xi):= \sum_{I\subset \{1,\dots, n\}} (-1)^{n-\# I} F\left(\xi +\sum_{i\in I}\delta_{y_i}\right)$,
where $\# I$ denotes the cardinality of $I$. Clearly, this operator is symmetric in $y_1,\dots, y_n$. Next, for $n \geq 1$, let $L^2(E^n, \lambda^n)$ be the space of (equivalence classes of) $\mathcal{B}^n$-measurable real-valued functions on $(E^n, \lambda^n)$ which are moreover square-integrable with respect to $\lambda^n$ and let $L^2_{(s)}(E^n, \lambda^n)$ be the closed subspace of symmetric functions of $L^2(E^n, \lambda^n)$. 
Then, let us define the following sequence of bounded linear operators from $L^2(\bbp_\Pi)$ to $L^2_{(s)}(E^n, \lambda^n)$, $n \geq 1$, for all measurable $F\in L^2(\bbp_\Pi)$, all $n \geq 1$ and all $(y_1, \dots, y_n)\in E^n$,
$T^0(F)= \bbe F(\Pi)$ and $T^n(F)(y_1,\dots, y_n)= \bbe D^n_{y_1,\dots, y_n}F(\Pi)= \int_{\mathbb{Z}_+(E)} D^n_{y_1,\dots, y_n}F(\xi) \bbp_\Pi(d\xi)$. Moreover, note that, for all $n\geq 1$, 
$$\|T^n\|_{L^2(\bbp_\Pi) \rightarrow L^2_{(s)}(E^n, \lambda^n)}:= \underset{F \in L^2(\bbp_\Pi)\, \bbe F^2(\Pi)\ne 0}{\sup} \dfrac{\left\|T^n(F)\right\|_{L^2(E^n, \lambda^n)}}{(\bbe F^2(\Pi))^{\frac{1}{2}}}\leq \sqrt{n!}.$$ 
Now, let us introduce the chaos decomposition on the Poisson space. For this purpose, let $\mathcal{H}_0=\bbr$ and let $\mathcal{H}_n:=\{I_n(f):\quad f\in L^2_{(s)}(E^n,\lambda^n)\}$, $n\geq 1$, where $I_n$ is the 
multiple Wiener-It\^o integral with respect to the compensated measure $\tilde{\Pi}=\Pi-\lambda$.  
Let $P_n$ be the orthogonal projection of $L^2(\Omega, \sigma(\Pi), \bbp)$ into $\mathcal{H}_n$. Then,
\begin{align*}
L^2(\Omega, \sigma(\Pi), \bbp)= \bigoplus_{n=0}^{+\infty} \mathcal{H}_n,
\end{align*}
with $P_0(F(\Pi))=\bbe F(\Pi)$ and $P_n(F(\Pi)):=\frac{1}{n!} I_n(T^n(F))$, $n\geq 1$ and $F\in L^2(\bbp_\Pi)$. Thanks to \cite[Theorem $1.3$]{LP11}, for all $F \in L^2(\bbp_\Pi)$, $F(\Pi)=\sum_{n=0}^{+\infty}\frac{1}{n!} I_n(T^n (F))$,
where the series converges in $L^2(\Omega, \sigma(\Pi), \bbp)$.  
Moreover, from \cite[Theorem $1.1$]{LP11}, for all $F,G \in L^2(\bbp_\Pi)$, one has
\begin{align*}
\bbe F(\Pi)G(\Pi):=T^0(F)T^0(G)+\sum_{n=1}^{+\infty}\frac{1}{n!} \langle T^n(F), T^n(G) \rangle_{L^2(E^n, \lambda^n)}.
\end{align*}
Finally, for any linear operator $R$ from $E$ to $E$, let us define, for all $n\geq 1$, all $f$ real-valued function on $E^n$ and all $y_1, \dots, y_n \in E^n$, $\rho^n_R (f)(y_1, \dots, y_n)= f(Ry_1, \dots, Ry_n)$. Now, as in \cite[Section $4$]{Peszat}, it is possible to define the second quantization of $R$ as soon as $\rho_R$ is 
a contraction on $L^2(E, \lambda)$. Then, one has, for all $F \in L^2(\bbp_\Pi)$,
\begin{align*}
\Gamma(R)F(\Pi):=\sum_{n=0}^{+\infty}\frac{1}{n!} I_n(\rho_R^n(T^n (F))).
\end{align*}
Note that $\Gamma(R)$ is a contraction acting on $L^2(\bbp_\Pi)$. To finish, let $\Pi$ be a Poisson random measure on $\bbr^d$ with intensity measure $\nu_\alpha$. For any $\xi \in \mathbb{Z}_+(E)$, 
\begin{align*}
\overline{\xi}(dx):=\xi(dx) \bbone_{\|x\|>1}+\left(\xi(dx)-\nu_\alpha(dx)\right)\bbone_{\|x\|\leq 1}.
\end{align*}
Next, set
\begin{align}\label{eq:defX}
X_\alpha=b+ \int_{\bbr^d} x \overline{\Pi}(dx),
\end{align}
Then, $X_\alpha \sim \mu_\alpha$, for some $b\in \bbr^d$ well chosen. Let $j$ be the operator acting on $L^2(\mu_\alpha)$ and defined, for all $f \in L^2(\mu_\alpha)$ and all $\xi \in  \mathbb{Z}_+(E)$, by $j(f)(\xi)=f\left(b+\int_{\bbr^d} x \overline{\xi}(dx)\right)$. Note that $j$ is an isometry from $L^2(\mu_\alpha)$ to $L^2(\bbp_\Pi)$ with $\bbe j(f)(\Pi)^2= \bbe f(X)^2= \int_{\bbr^d} f(x)^2 \mu_\alpha(dx)$. Combined with the previous chaos decomposition, it follows that, for all $f\in L^2(\mu_\alpha)$, 
\begin{align*}
j(f)(\Pi):= \sum_{n=0}^{+\infty} \frac{1}{n!} I_n(T^n(j(f))).
\end{align*}
Observe that the hypothesis $(H.1)$ to $(H.5)$ of \cite[Section $5$]{Peszat} are satisfied, so that, from \cite[Corollary $6.2$]{Peszat}, the following representation holds true on $L^2(\Omega, \sigma(\Pi), \bbp)$
\begin{align}\label{eq:ChaoticExpansion}
P^{\nu_\alpha}_t(f)(X_\alpha):=\sum_{n=0}^{+\infty} \frac{1}{n!} I_n(\rho^n_{e^{-t}}(T^n(j(f)))).
\end{align}
Based on the previous representation, let us give an expression for the $L^2(\mu_\alpha)$ dual semigroup of $(P^{\nu_\alpha}_t)_{t\geq 0}$ (denoted by $((P^{\nu_\alpha}_t)^*)_{t\geq 0}$) and let us prove that both semigroups commute. Then, as a byproduct, one obtains the generator of the product semigroup by a standard result (see, e.g., \cite[Theorem $1$]{Trotter59}).
 
\begin{thm}\label{def:squared_Mehler_L2}
Let $\alpha \in (0,2)$ and let $\nu_\alpha$ be a non-degenerate symmetric L\'evy measure on $\bbr^d$ satisfying \eqref{eq:scale}. Let $\mu_\alpha$ be the non-degenerate $\alpha$-stable probability measure associated with $\nu_\alpha$ and defined by \eqref{def:stable}. Let $(P^{\nu_\alpha}_t)_{t\geq 0}$ be the $L^2(\mu_\alpha)$-extension of the semigroup of operators defined by \eqref{eq:StOUSM} and denote by $((P^{\nu_\alpha}_t)^*)_{t\geq 0}$ its dual semigroup.  
Then, for all $s,t \geq 0$, the operators $P^{\nu_\alpha}_t$ and $(P^{\nu_\alpha}_s)^*$ commute 
and the family of operators $(\mathcal{P}_t)_{t\geq 0}=(P^{\nu_\alpha}_{t/\alpha} \circ (P^{\nu_\alpha}_{t/\alpha})^*)_{t\geq 0}$ is a $C_0$-semigroup of contractions on $L^2(\mu_\alpha)$ 
with generator the closure of the sum of the 
generators of $(P^{\nu_\alpha}_{t/\alpha})_{t\geq 0}$ and of $((P^{\nu_\alpha}_{t/\alpha})^*)_{t\geq 0}$. 
Moreover, for all $t\geq 0$ and all $f\in L^2(\mu_\alpha)$,
\begin{align*}
\mathcal{P}_t(f)(X_\alpha):=\sum_{n=0}^{+\infty} \frac{e^{-n t}}{n!} I_n(T^n(j(f))),
\end{align*}
with $X_\alpha\sim \mu_\alpha$.  
In addition, the domain of $\mathcal{L}$, the $L^2(\mu_\alpha)$-generator 
of $(\mathcal{P}_t)_{t\geq 0}$, is given by
\begin{align*}
D(\mathcal{L})=\{f\in L^2(\mu_\alpha):\quad \sum_{n=1}^{+\infty} \frac{n^2 }{n!}\|T^n(j(f))\|^2_{L^2(\bbr^{n d}, \nu_\alpha^{\otimes n})}<+\infty\},
\end{align*}
and, for all $f\in D(\mathcal{L})$,
\begin{align*}
\mathcal{L}(f)(X_\alpha)=-\sum_{n=1}^{+\infty}\dfrac{n}{n!} I_n(T^n(j(f))),
\end{align*}
where the convergence is in $L^2(\Omega, \sigma(\Pi), \bbp)$. Finally, 
the $L^2(\mu_\alpha)$-generator of the semigroup $(P^{\nu_\alpha}_t)_{t\geq 0}$ is normal, i.e., 
\begin{align*}
\mathcal{L}^{\alpha} (\mathcal{L}^{\alpha})^* = (\mathcal{L}^{\alpha})^* \mathcal{L}^{\alpha}, \quad \mathcal{D}(\mathcal{L}^{\alpha}) = \mathcal{D} ((\mathcal{L}^{\alpha})^*).
\end{align*}
\end{thm}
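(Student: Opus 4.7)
The plan is to leverage the chaotic representation \eqref{eq:ChaoticExpansion} throughout: once the action of $(P^{\nu_\alpha}_t)^*$ on each chaos is understood, every claim in the theorem reduces to essentially diagonal manipulations on the Poisson Fock space. First I would compute the $L^2(\nu_\alpha^{\otimes n})$-adjoint of the $n$-fold scaling operator $\rho^n_c$ defined by $\rho^n_cF(y_1,\dots,y_n)=F(cy_1,\dots,cy_n)$. Using the polar decomposition \eqref{eq:polar} together with the scaling property \eqref{eq:scale} applied coordinatewise, one obtains $\int h(cy_1,\dots,cy_n)\,\nu_\alpha^{\otimes n}(dy)=c^{n\alpha}\int h(u_1,\dots,u_n)\,\nu_\alpha^{\otimes n}(du)$, which yields the key identity $(\rho_c^n)^*=c^{n\alpha}\rho^n_{1/c}$ on $L^2_{(s)}(\nu_\alpha^{\otimes n})$.

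Transporting this identity to the Poisson Fock space via the isometry $j$ and the orthogonality relation $\bbe[I_n(F)I_m(G)]=\delta_{nm}\,n!\,\langle F,G\rangle_{L^2(\nu_\alpha^{\otimes n})}$ yields the explicit representation
\[
(P^{\nu_\alpha}_t)^*g(X_\alpha)=\sum_{n=0}^{+\infty}\frac{e^{-nt\alpha}}{n!}I_n\big(\rho^n_{e^t}T^n(j(g))\big),
\]
from which contractivity on $L^2(\mu_\alpha)$ is immediate. The commutation $P^{\nu_\alpha}_t(P^{\nu_\alpha}_s)^*=(P^{\nu_\alpha}_s)^*P^{\nu_\alpha}_t$ then follows at the level of chaos, because on the $n$-th kernel both orderings produce $e^{-ns\alpha}\rho^n_{e^{s-t}}$ by the group law $\rho^n_a\rho^n_b=\rho^n_{ab}$. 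Specialising to $s=t=t_0/\alpha$ collapses the scaling into $\rho^n_1=\operatorname{Id}$ and leaves only the scalar multiplier $e^{-nt_0}$, so $\mathcal{P}_{t_0}f(X_\alpha)=\sum_{n\geq 0}\frac{e^{-nt_0}}{n!}I_n(T^n(j(f)))$, which is the announced formula. The semigroup property, contractivity on $L^2(\mu_\alpha)$, and strong continuity at the origin all follow from this diagonal representation by Parseval's identity and dominated convergence.

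The characterisation of $\mathcal{D}(\mathcal{L})$ and the formula for $\mathcal{L}f$ come by differentiating the scalar $e^{-nt}$ at $t=0$ term by term, using the standard resolvent criterion for membership in the generator of a $C_0$-contraction semigroup. The identification of $\mathcal{L}$ as the closure of the sum of the two generators is then an instance of Trotter's theorem \cite{Trotter59}: the commuting $C_0$-contraction semigroups $(P^{\nu_\alpha}_{t/\alpha})_{t\geq 0}$ and $((P^{\nu_\alpha}_{t/\alpha})^*)_{t\geq 0}$ share a common core, namely the elements with finite chaos expansion, on which the sum of generators is densely defined and closable. Finally, normality of $\mathcal{L}^\alpha$ follows from the semigroup commutation: differentiating $P^{\nu_\alpha}_t(P^{\nu_\alpha}_s)^*=(P^{\nu_\alpha}_s)^*P^{\nu_\alpha}_t$ in both variables at $0$ on this same core gives $\mathcal{L}^\alpha(\mathcal{L}^\alpha)^*=(\mathcal{L}^\alpha)^*\mathcal{L}^\alpha$, and the equality of domains $\mathcal{D}(\mathcal{L}^\alpha)=\mathcal{D}((\mathcal{L}^\alpha)^*)$ can be read off from the chaos representation, since both reduce to the same integrability condition on the symmetric kernels.

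The main technical obstacle, in my view, is verifying rigorously that the $L^2(\mu_\alpha)$-dual furnished by the general theory of dual semigroups \cite[Chapter $1.10$]{Pa83} actually coincides with the chaos series written above, rather than merely being \emph{defined} by it. This amounts to checking the duality pairing $\langle P^{\nu_\alpha}_tf,g\rangle_{L^2(\mu_\alpha)}=\langle f,(P^{\nu_\alpha}_t)^*g\rangle_{L^2(\mu_\alpha)}$ on the dense subclass of functionals with finite chaos, where both sides reduce to the Fock-space pairing through $T^n\circ j$, followed by a careful density argument that respects the scaling asymmetry encoded in the factor $c^{n\alpha}$ of the adjoint identity.
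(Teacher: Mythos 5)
Your proposal is correct and follows essentially the same route as the paper: the adjoint identity $(\rho^n_c)^*=c^{n\alpha}\rho^n_{1/c}$ is exactly the radial change of variables the paper performs inside the duality pairing, and the subsequent steps (commutation on each chaos via the group law for $\rho^n$, Trotter's theorem for the generator of the product semigroup, term-by-term differentiation of $e^{-nt}$ for the domain, and normality from commutation) match the paper's argument. The only cosmetic difference is that you verify commutation by composing kernels directly rather than through the pairing $\langle\cdot;\cdot\rangle_{L^2(\bbr^{nd},\nu_\alpha^{\otimes n})}$.
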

 
\begin{proof}
Let us start with a computation on the adjoint semigroup of $(P^{\nu_\alpha}_t)_{t\geq 0}$.  
By definition and using the chaotic expansion \eqref{eq:ChaoticExpansion}, 
for all $f,g \in L^2(\mu_\alpha)$ and all $t\geq 0$,
\begin{align*}
\langle (P^{\nu_\alpha}_t)^*(g);f \rangle:=\langle P^{\nu_\alpha}_t(f);g \rangle &
= \bbe P^{\nu_\alpha}_t(f)(X_\alpha)g(X_\alpha)\\
&= \bbe j(P^{\nu_\alpha}_t(f))(\Pi)(j(g))(\Pi)\\
&= \sum_{n=0}^{+\infty} \frac{1}{n!} \langle \rho^n_{e^{-t}} T^n(j(f));T^n(j(g)) 
\rangle_{L^2(\bbr^{nd},\nu_\alpha^{\otimes n} )}.
\end{align*}
But, for all $n\geq 1$ and all $t\geq 0$,
\begin{align*}
\langle \rho^n_{e^{-t}} T^n(j(f));T^n(j(g)) 
\rangle_{L^2(\bbr^{nd},\nu_\alpha^{\otimes n} )}&:= \int_{\bbr^{n d}} T^n(j(f))(e^{-t}y_1,\dots,e^{-t}y_n)\\
&\quad\quad\quad \times T^n(j(g))(y_1,\dots,y_n) \nu_\alpha^{\otimes n}(dy_1,\dots, dy_n), 
\end{align*}
so changing variables in the radial coordinates, $r_i=e^{t}\rho_i$, for all $i \in \{1,\dots, n\}$, 
\begin{align*}
\langle \rho^n_{e^{-t}} T^n(j(f));T^n(j(g)) \rangle_{L^2(\bbr^{nd},\nu_\alpha^{\otimes n} )}&= e^{-n\alpha t} \int_{\bbr^{n d}} T^n(j(f))(y_1,\dots,y_n)\\
&\quad\quad\times T^n(j(g))(e^t y_1,\dots, e^t y_n) \nu_\alpha^{\otimes n}(dy_1,\dots, dy_n)\\
&= e^{-n\alpha t} \langle T^n(j(f)); \rho^n_{e^{t}} T^n(j(g)) \rangle_{L^2(\bbr^{nd},\nu_\alpha^{\otimes n} )}.
\end{align*}
Therefore, 
\begin{align*}
\langle (P^{\nu_\alpha}_t)^*(g);f \rangle= \sum_{n=0}^{+\infty} \frac{1}{n!} e^{-n\alpha t} \langle T^n(j(f)); \rho^n_{e^{t}} T^n(j(g)) \rangle_{L^2(\bbr^{nd},\nu_\alpha^{\otimes n} )}, 
\end{align*}
i.e.,  for all $g \in L^2(\mu_\alpha)$, 
\begin{align*}
(P^{\nu_\alpha}_t)^*(g)(X_\alpha)= \sum_{n=0}^{+\infty} \frac{e^{-n\alpha t}}{n!} I_n(\rho^n_{e^{t}} T^n(j(g)) ),
\end{align*}
so that, for all $g \in L^2(\mu_\alpha)$,
\begin{align*}
(P^{\nu_\alpha}_{t/\alpha})^*(g)(X_\alpha)= \sum_{n=0}^{+\infty} \frac{e^{-n t}}{n!} I_n(\rho^n_{e^{t/\alpha}} T^n(j(g)) ).
\end{align*} 
Let $s,t\geq 0$ and let $f,g \in L^2(\mu_\alpha)$, then 
\begin{align*}
\langle P^{\nu_\alpha}_t \circ (P^{\nu_\alpha}_s)^*(f); g \rangle&= \langle (P^{\nu_\alpha}_s)^*(f); (P^{\nu_\alpha}_t)^*(g) \rangle \\
&= \sum_{n=0}^{+\infty} \frac{e^{-n\alpha (t+s)}}{n!} \langle \rho^n_{e^{s}}T^n(j(f)); \rho^n_{e^{t}}T^n(j(g)) \rangle_{L^2(\bbr^{nd},\nu_\alpha^{\otimes n} )}, 
\end{align*}
and observe that, for all $n\geq 1$ and all $s,t\geq 0$, 
\begin{align*}
\langle \rho^n_{e^{s}}T^n(j(f)); \rho^n_{e^{t}}T^n(j(g)) \rangle_{L^2(\bbr^{nd},\nu_\alpha^{\otimes n} )}&= \int_{\bbr^{nd}} T^n(j(f))(e^s y_1,\dots, e^s y_n)\\
&\quad\quad \times T^n(j(g))(e^t y_1,\dots, e^t y_n)\nu_\alpha^{\otimes n}(dy_1,\dots, dy_n)\\
&=e^{n \alpha s} \int_{\bbr^{nd}} T^n(j(f))( y_1,\dots, y_n)\\
&\quad\quad \times T^n(j(g))(e^{t-s} y_1,\dots, e^{t-s} y_n)\nu_\alpha^{\otimes n}(dy_1,\dots, dy_n),  
\end{align*}
thus, 
\begin{align*}
\langle P^{\nu_\alpha}_t \circ (P^{\nu_\alpha}_s)^*(f); g \rangle &= \sum_{n=0}^{+\infty} \frac{e^{-n \alpha t}}{n!} \langle T^n(j(f)); \rho^n_{e^{t-s}}T^n(j(g)) \rangle_{L^2(\bbr^{nd},\nu_\alpha^{\otimes n} )}\\
&=\sum_{n=0}^{+\infty} \frac{e^{-n \alpha t}}{n!} \langle T^n(j(f)); \rho^n_{e^{t-s}}T^n(j(g)) \rangle_{L^2(\bbr^{nd},\nu_\alpha^{\otimes n} )}.
\end{align*}
Similarly, for all $s,t\geq 0$ and all $f,g \in L^2(\mu_\alpha)$,
\begin{align*}
\langle  (P^{\nu_\alpha}_s)^* \circ P^{\nu_\alpha}_t(f); g \rangle&=\sum_{n=0}^{+\infty} \frac{1}{n!} \langle \rho^n_{e^{-t}}T^n(j(f)); \rho^n_{e^{-s}} T^n(j(g)) \rangle_{L^2(\bbr^{n d}, \nu_\alpha^{\otimes n})}\\
&=\sum_{n=0}^{+\infty} \frac{e^{-n\alpha t}}{n!} \langle T^n(j(f)); \rho^n_{e^{t-s}} T^n(j(g)) \rangle_{L^2(\bbr^{n d}, \nu_\alpha^{\otimes n})}.
\end{align*}
Therefore, for all $s,t \geq 0$ and all $f,g \in L^2(\mu_\alpha)$,
\begin{align*}
\langle  (P^{\nu_\alpha}_s)^* \circ P^{\nu_\alpha}_t(f); g \rangle=\langle P^{\nu_\alpha}_t \circ (P^{\nu_\alpha}_s)^*(f); g \rangle, 
\end{align*}
in other words, $(P^{\nu_\alpha}_s)^* \circ P^{\nu_\alpha}_t= P^{\nu_\alpha}_t \circ (P^{\nu_\alpha}_s)^*$, 
for all $s,t\geq 0$.  By \cite[Theorem $1$]{Trotter59}, it follows that $((P^{\nu_\alpha}_{t/\alpha})^* \circ P^{\nu_\alpha}_{t/\alpha})_{t\geq 0}$ is a $C_0$-semigroup of contractions on $L^2(\mu_\alpha)$ with generator given by the closure of the sum of the generators of the $C_0$-semigroups of contractions $((P^{\nu_\alpha}_{t/\alpha})^*)_{t\geq 0}$ and $(P^{\nu_\alpha}_{t/\alpha})_{t\geq 0}$. Now, for all $f,g\in L^2(\mu_\alpha)$, one has, for all $t\geq 0$,
\begin{align*}
\langle \mathcal{P}_t(f);g\rangle&=\langle P^{\nu_\alpha}_{t/\alpha}(f);P^{\nu_\alpha}_{t/\alpha}(g) \rangle\\
&=\sum_{n=0}^{+\infty} \frac{e^{-n t}}{n!} \langle T^n(j(f)); T^n(j(g)) \rangle_{L^2(\bbr^{n d}, \nu_\alpha^{\otimes n})},
\end{align*}
i.e., the following representation holds true
\begin{align*}
\mathcal{P}_t(f)(X_\alpha)=\sum_{n=0}^{+\infty} \frac{e^{-n t}}{n!} I_n(T^n(j(f))).
\end{align*}
The domain of the $L^2(\mu_\alpha)$-generator is 
\begin{align*}
D(\mathcal{L}):=\left\{f\in L^2(\mu_\alpha):\quad \underset{t\rightarrow 0^+}{\lim} \dfrac{\mathcal{P}_t(f)-f}{t} \operatorname{\quad exists\quad in \quad L^2(\mu_\alpha)} \right\},  
\end{align*}
which is next characterized via the chaos expansion on the Poisson space.  
Let $f \in L^2(\mu_\alpha)$ be such that
\begin{align*}
\sum_{n=1}^{+\infty} \frac{n^2 }{n!} \|T^n(j(f))\|^2_{L^2(\bbr^{nd}, \nu_\alpha^{\otimes n})}<+\infty, 
\end{align*}
and let $L(f)$ be following series (converging in $L^2(\Omega, \sigma(\Pi),\mathbb{P})$), 
\begin{align*}
L(f)=-\sum_{n=1}^{+\infty}\frac{n}{n!} I_n(T^n(j(f))). 
\end{align*}
Then, for all $t>0$, by orthogonality, 
\begin{align}\label{eq:normL2}
\left\|\frac{\mathcal{P}_t(f)(X_\alpha)-f(X_\alpha)}{t}-L(f)\right\|^2_{L^2(\bbp)}=\sum_{n=0}^{+\infty} \left(\dfrac{e^{- n t}-1}{t}+n\right)^2\frac{\|T^n(j(f))\|^2_{L^2(\bbr^{n d}, \nu_\alpha^{\otimes n})}}{n!}, 
\end{align}
and the right-hand side of \eqref{eq:normL2} clearly converges to $0$ as $t\to 0$. 
Now, let $(t_n)_{n\geq 1}$ be a sequence of positive reals converging to $0$ as $n \to +\infty$, then 
$\left((\mathcal{P}_{t_n}(f)-f)/t_n\right)_{n\geq 1}$ is a Cauchy sequence in $L^2(\mu_\alpha)$ which is complete, therefore 
\begin{align*}
\left\{f\in L^2(\mu_\alpha):\quad \sum_{n=1}^{+\infty} \frac{n^2 }{n!}\|T^n(j(f))\|^2_{L^2(\bbr^{n d}, \nu_\alpha^{\otimes n})}<+\infty\right\} \subset D(\mathcal{L}).  
\end{align*}
Next, let $f\in D(\mathcal{L})$. Then, $\mathcal{L}(f)$ belongs to $L^2(\mu_\alpha)$ so that $\mathcal{L}(f)(X_\alpha)$ is a well-defined element in $L^2(\Omega, \sigma(\Pi),\mathbb{P})$ and let us compute its 
chaos expansion.  Since, in $L^2(\Omega, \sigma(\Pi), \bbp)$, 
\begin{align*}
\mathcal{L}(f)(X_\alpha):= \lim_{t\rightarrow 0^+} \frac{\mathcal{P}_t(f)(X_\alpha)-f(X_\alpha)}{t}, 
\end{align*}
by projecting on the $n$-th chaos, one gets, for all $n\geq 1$,
\begin{align*}
\frac{1}{n!} I_n(T^n j(\mathcal{L}(f)))=\frac{-n}{n!} I_n(T^n(j(f))), 
\end{align*}
This concludes the characterization of the domain of $\mathcal{L}$ by reversing our previous inclusion.   
To finish the proof of this theorem note that the normality of $\mathcal{L}^\alpha$ is a direct 
consequence of the commutativity 
of the operators $P^{\nu_\alpha}_t$ and $(P^{\nu_\alpha}_s)^*$, which is valid for all $s,t\geq 0$. 
\end{proof}

For all $f \in L^2(\mu_\alpha)$ and all $g\in \mathcal{D}(\mathcal{L})$, let 
\begin{align*}
\mathcal{E} (f,g)=\langle f; (-\mathcal{L})(g) \rangle_{L^2(\mu_\alpha)}= \sum_{n=1}^{+\infty} \frac{n}{n!} \langle T^n(j(g)); T^n(j(f)) \rangle_{L^2\left(\bbr^{n d}, \nu_\alpha^{\otimes n}\right)},
\end{align*}
and consider the linear subspace of $L^2(\mu_\alpha)$ given by:  
\begin{align*}
\mathcal{D}\left(\mathcal{E}\right):= \left\{f \in L^2(\mu_\alpha):\quad 
\sum_{n=1}^{+\infty} \frac{n }{n!} \|T^n(j(f))\|^2_{L^2(\bbr^{n d}, \nu_\alpha^{\otimes n})}<+\infty \right\}.
\end{align*}
Clearly, $\mathcal{D}(\mathcal{L})\subset\mathcal{D}\left(\mathcal{E}\right)$.  
Then, $\mathcal{D}\left(\mathcal{E}\right)$ is a dense linear subspace of $L^2(\mu_\alpha)$. 
Moreover, for all $f,g \in \mathcal{D}\left(\mathcal{E}\right)$, one has
\begin{align*}
\mathcal{E}(f,g)= \sum_{n=1}^{+\infty} \frac{n}{n!} \langle T^n(j(g)); T^n(j(f)) \rangle_{L^2\left(\bbr^{n d}, \nu_\alpha^{\otimes n}\right)}.
\end{align*}
This symmetric bilinear form $(\mathcal{E}, \mathcal{D}\left(\mathcal{E}\right))$ is clearly closed since $(\mathcal{L},\mathcal{D}(\mathcal{L}))$ is a non-positive self-adjoint operator on $L^2(\mu_\alpha)$.~As proved in Theorem \ref{def:squared_Mehler_L2}, the generator $\mathcal{L}$ is the closure of the sum of generators of the semigroups $(P^{\nu_\alpha}_{t})_{t\geq 0}$ and $((P^{\nu_\alpha}_{t})^*)_{t\geq 0}$ (divided by $\alpha$).Then, for all $(f,g) \in \mathcal{D}(\mathcal{L}^{\alpha}) \subset \mathcal{D}(\mathcal{L})$, 
\begin{align*}
\alpha\, \mathcal{E} (f,g)=\alpha\, \langle f; -\mathcal{L}(g) \rangle_{L^2(\mu_\alpha)}=\langle f; -(\mathcal{L}^{\alpha}+(\mathcal{L}^{\alpha})^*)(g)\rangle_{L^2(\mu_\alpha)},
\end{align*}
so that, for all $f\in \mathcal{D}(\mathcal{L}^{\alpha})$, 
$\mathcal{E} (f,f)=\frac{2}{\alpha} \langle -\mathcal{L}^{\alpha}(f); f \rangle_{L^2(\mu_\alpha)}$.  
Finally, the next proposition ensures that $\mathcal{S}(\bbr^d)\subset \mathcal{D}(\mathcal{L}^{\alpha,p})$, for all $p \in (1,+\infty)$, where $(\mathcal{L}^{\alpha,p},\mathcal{D}(\mathcal{L}^{\alpha,p}))$ is the $L^p(\mu_\alpha)$-generator of the semigroup $(P^{\nu_\alpha}_t)_{t\geq 0}$. 

\begin{prop}\label{prop:first_inclusion_smart_proof}
Let $\alpha\in (0,2)$, let $\nu_\alpha$ be a non-degenerate symmetric L\'evy measure on $\bbr^d$ satisfying \eqref{eq:scale} and let $p \in (1,+\infty)$. Then, $\mathcal{S}(\bbr^d) \subset \mathcal{D}(\mathcal{L}^{\alpha,p})$ and, for all $f \in \mathcal{S}(\bbr^d)$, 
\begin{align*}
\mathcal{L}^{\alpha,p}(f) = \mathcal{L}^\alpha(f), 
\end{align*}
with,
\begin{align*}
\mathcal{L}^\alpha(f)(x) = \left\{
    \begin{array}{ll}
       -\langle x;\nabla(f)(x) \rangle + \int_{\bbr^d} \langle \nabla(f)(x+u) - \nabla(f)(x) ; u \rangle \nu_\alpha(du)  & \mbox{if } \alpha \in (1,2) \\
       - \langle x; \nabla(f)(x) \rangle + \int_{\bbr^d} \left(f(x+u)-f(x) - \langle\nabla(f)(x);u\rangle\bbone_{\|u\|\leq 1}\right) \nu_1(du)  & \mbox{if } \alpha = 1\\
       -\langle x; \nabla(f)(x) \rangle + \alpha \int_{\bbr^d} (f(x+u)-f(x))\nu_\alpha(du)  & \mbox{if } \alpha \in (0,1). 
    \end{array}
\right.
\end{align*} 
\end{prop}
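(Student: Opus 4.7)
The claim has two parts: first, that the pointwise operator $\mathcal{L}^\alpha f$ (as defined by the three case-dependent formulas) lies in $L^p(\mu_\alpha)$ for every Schwartz $f$; second, that the difference quotient $(P^{\nu_\alpha}_t f - f)/t$ converges to $\mathcal{L}^\alpha f$ in $L^p(\mu_\alpha)$ as $t\to 0^+$, i.e., $f \in \mathcal{D}(\mathcal{L}^{\alpha,p})$ with $\mathcal{L}^{\alpha,p} f = \mathcal{L}^\alpha f$. For the first part, I would verify that $\mathcal{L}^\alpha f$ is bounded on $\bbr^d$: the drift $-\langle x, \nabla f(x)\rangle$ is itself Schwartz (Schwartz space is stable under multiplication by polynomials), and in each $\alpha$-range the singularity of $\nu_\alpha$ near the origin is absorbed by a Taylor expansion of $f$ matched to the built-in compensator, while the tail is controlled by $\int_{\|u\|>1}(1\wedge \|u\|^\beta)\,\nu_\alpha(du) <+\infty$ for the appropriate $\beta$ (namely $\beta=1$ for $\alpha \in (1,2)$ and $\beta=0$ for $\alpha \in (0,1]$).

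Second, I would establish the pointwise integral identity
\begin{align*}
P^{\nu_\alpha}_t f(x) - f(x) = \int_0^t P^{\nu_\alpha}_s(\mathcal{L}^\alpha f)(x)\, ds, \qquad t\ge 0,\ x\in\bbr^d,
\end{align*}
via Fourier analysis. Using $\hat{\mu}_\alpha(\eta) = \exp(-\psi_\alpha(\eta))$ with the $\alpha$-homogeneous symbol $\psi_\alpha(\xi) = \int_{\mathbb{S}^{d-1}} |\langle \xi, y\rangle|^\alpha \lambda_1(dy)$, a standard change of variables in the defining integral of $P^{\nu_\alpha}_s f$ yields
\begin{align*}
\mathcal{F}(P^{\nu_\alpha}_s f)(\xi) = e^{ds}\,\mathcal{F}(f)(e^s\xi)\,\exp\bigl(-(e^{\alpha s}-1)\psi_\alpha(\xi)\bigr).
\end{align*}
Differentiating in $s$ and invoking the relation $\tau_\alpha(\xi) = i\nabla\psi_\alpha(\xi)$ (from the proof of Proposition~\ref{prop:Bismut}) together with Euler's identity $\langle\xi,\nabla\psi_\alpha(\xi)\rangle = \alpha\psi_\alpha(\xi)$, I identify $\partial_s\mathcal{F}(P^{\nu_\alpha}_s f)(\xi) = \mathcal{F}(P^{\nu_\alpha}_s \mathcal{L}^\alpha f)(\xi)$; remarkably, the three case-dependent formulas for $\mathcal{L}^\alpha$ all produce the \emph{same} Fourier symbol $d + \langle \xi, \nabla_\xi\rangle - \alpha\psi_\alpha(\xi)$. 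Integrating in $s\in(0,t)$, Fourier-inverting, and interchanging the integrals (legitimate thanks to the Schwartz decay of $f$ and the exponential decay of $\exp(-(e^{\alpha s}-1)\psi_\alpha)$ in $\xi$) yields the displayed identity pointwise.

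Finally, dividing the pointwise identity by $t$ and using that $(P^{\nu_\alpha}_s)_{s\geq 0}$ is a $C_0$-semigroup of Markovian contractions on $L^p(\mu_\alpha)$ (established earlier in the text), I conclude
\begin{align*}
\Bigl\|\tfrac{P^{\nu_\alpha}_t f - f}{t} - \mathcal{L}^\alpha f\Bigr\|_{L^p(\mu_\alpha)} \leq \tfrac{1}{t}\int_0^t \bigl\|P^{\nu_\alpha}_s\mathcal{L}^\alpha f - \mathcal{L}^\alpha f\bigr\|_{L^p(\mu_\alpha)}\, ds \xrightarrow[t\to 0^+]{} 0,
\end{align*}
since $\mathcal{L}^\alpha f \in L^p(\mu_\alpha)$ by Step~1 and strong continuity gives $\|P^{\nu_\alpha}_s\mathcal{L}^\alpha f - \mathcal{L}^\alpha f\|_{L^p(\mu_\alpha)} \to 0$ as $s \to 0^+$, so its Cesàro average also vanishes.

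The main obstacle lies in Step~2 for the boundary case $\alpha = 1$, where the drift and the compensator $\langle \nabla f(x), u\rangle \bbone_{\|u\|\leq 1}$ must be balanced exactly to keep the jump integral convergent near zero and also to produce the correct $t$-derivative. The Fourier-analytic route is preferable because the three ranges of $\alpha$ share the same symbol $\psi_\alpha$ and are handled uniformly, and it sidesteps the singularity of $\partial_t (1-e^{-\alpha t})^{1/\alpha} \sim (\alpha t)^{1/\alpha-1}$ as $t \to 0^+$ (which blows up for $\alpha > 1$) that would otherwise forbid a naive pointwise differentiation under the $\mu_\alpha$-integral defining $P^{\nu_\alpha}_t f$.
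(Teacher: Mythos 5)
Your argument is correct and arrives at the statement by a genuinely different route from the paper. The paper treats the three ranges $\alpha\in(1,2)$, $\alpha\in(0,1)$ and $\alpha=1$ in separate steps: it imports the identity $\tfrac{d}{dt}P^{\nu_\alpha}_t(f)=\mathcal{L}^\alpha\big(P^{\nu_\alpha}_t(f)\big)$ from earlier work, then proves the commutation $\mathcal{L}^\alpha\circ P^{\nu_\alpha}_t=P^{\nu_\alpha}_t\circ\mathcal{L}^\alpha$ case by case --- via the Bismut formula of Proposition~\ref{prop:Bismut} and the relations $D_j^{\alpha-1}\circ P_t^{\nu_\alpha}=P_t^{\nu_\alpha}\circ D_j^{\alpha-1}$ for $\alpha\in(1,2)$, and via spherical-coordinate Fourier manipulations together with the exact homogeneity $\tau_1(c\xi)=c\,\tau_1(\xi)$ (which requires the symmetry of $\sigma$) for $\alpha\le 1$ --- before running the same Ces\`aro/strong-continuity argument you use at the end. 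You collapse those two steps into the single closed-form identity $\mathcal{F}(P^{\nu_\alpha}_s f)(\xi)=e^{ds}\,\mathcal{F}(f)(e^s\xi)\exp\left(-(e^{\alpha s}-1)\psi_\alpha(\xi)\right)$, whose $s$-derivative is visibly $\mathcal{F}(P^{\nu_\alpha}_s\mathcal{L}^\alpha f)(\xi)$ once one checks that all three case-dependent formulas for $\mathcal{L}^\alpha$ share the symbol $d+\langle\xi,\nabla_\xi\rangle-\alpha\psi_\alpha(\xi)$; for $\alpha\in(1,2)$ this uses $\tau_\alpha=i\nabla\psi_\alpha$ and Euler's identity (legitimate since $\psi_\alpha$ is $C^1$ there), while for $\alpha\le 1$ the non-local symbol is directly $-\alpha\psi_\alpha$ through \eqref{eq:rep_spectral_measure}, which already encodes the symmetry of $\sigma$ that the paper re-verifies by hand in its Step~3. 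What your route buys is uniformity across the three regimes, independence from the externally cited heat-equation identity, and a transparent reason why the compensators balance at $\alpha=1$; what the paper's route buys is the explicit commutation relations, which it reuses elsewhere. The remaining checks you gesture at (boundedness of $\mathcal{L}^\alpha f$, integrability of $\mathcal{F}(\mathcal{L}^\alpha f)$, Fubini on the compact $s$-interval) all go through for Schwartz $f$ because $\psi_\alpha(\xi)\le C\|\xi\|^\alpha$ and $\mu_\alpha$ is a probability measure, so I see no gap.
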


\begin{proof}
\textit{Step 1}: Let us start with the case $\alpha \in (1,2)$. Let $f\in \mathcal{S}(\bbr^d)$. As in \cite[Step $1$ of Proposition $3.5$]{AH19_2}, for all $t>0$ and all $x\in \bbr^d$,
\begin{align}\label{eq:frac_heat_equation_regular}
\dfrac{d}{dt}\left(P^{\nu_\alpha}_t(f)(x)\right) = \mathcal{L}^\alpha\left(P^{\nu_\alpha}_t(f)\right)(x),
\end{align}
Moreover, using to the commutation relation $\nabla (P^{\nu_\alpha}_t(f))= e^{-t} P_t^{\nu_\alpha}(\nabla f)$, for all $t>0$ and all $x\in \bbr^d$, 
\begin{align}\label{eq:TempDeriv}
\dfrac{d}{dt}\left(P^{\nu_\alpha}_t(f)(x)\right) = -e^{-t} \langle x; P_t^{\nu_\alpha}(\nabla f)(x) \rangle +  e^{-t} \int_{\bbr^d} \langle P_t^{\nu_\alpha}(\nabla(f))(x+u)-P_t^{\nu_\alpha}(\nabla(f))(x);u\rangle \nu_\alpha(du).
\end{align}
From \eqref{eq:TempDeriv}, it follows that $t\rightarrow \frac{d}{dt}\left(P^{\nu_\alpha}_t(f)(x)\right)$ is continuous. Thus, for all $t>0$ and all $x\in \bbr^d$,
\begin{align*}
\dfrac{P_t^{\nu_\alpha}(f)(x)-f(x)}{t}&=\frac{1}{t} \int_0^t \dfrac{d}{ds}\left(P_s^{\nu_\alpha}(f)(x)\right)ds\\
& = \frac{1}{t}  \int_0^t  \mathcal{L}^\alpha\left(P^{\nu_\alpha}_s(f)\right)(x) ds.
\end{align*}
At this point, let us prove that, for all $f \in \mathcal{S}(\bbr^d)$, all $x\in \bbr^d$ and all $t\geq 0$,
\begin{align*}
\mathcal{L}^\alpha\left(P^{\nu_\alpha}_t(f)\right)(x) = P^{\nu_\alpha}_t\left(\mathcal{L}^\alpha(f)\right)(x).
\end{align*}
This can be seen as a consequence of the Bismut-type formula proved in Proposition \ref{prop:Bismut}. Indeed, in regard to the drift term in $P^{\nu_\alpha}_t(\mathcal{L}^\alpha(f))(x)$, for all $f \in \mathcal{S}(\bbr^d)$, all $x\in \bbr^d$ and all $t>0$,  
\begin{align*}
(I):=\int_{\bbr^d} \langle xe^{-t} + (1-e^{-\alpha t})^{\frac{1}{\alpha}} y &; \nabla(f) \left(xe^{-t} + (1-e^{-\alpha t})^{\frac{1}{\alpha}} y\right)\rangle \mu_\alpha(dy)  = \langle x; \nabla(P^{\nu_\alpha}_t(f))(x) \rangle \\
& \quad\quad\quad + \left(1-e^{-\alpha t}\right)^{\frac{1}{\alpha}} \int_{\bbr^d} \langle y ; \nabla(f)\left(xe^{-t}+(1-e^{-\alpha t})^{\frac{1}{\alpha}}y\right) \rangle \mu_\alpha(dy) , \\
& = \langle x; \nabla(P^{\nu_\alpha}_t(f))(x) \rangle +e^{(\alpha-1)t} (1-e^{-\alpha t}) \dfrac{e^{-(\alpha-1)t}}{(1-e^{-\alpha t})^{1-\frac{1}{\alpha}}} \\
&\quad\quad\quad \times \int_{\bbr^d} \langle y ; \nabla(f)\left(xe^{-t} + (1-e^{-\alpha t})^{\frac{1}{\alpha}}y\right) \rangle \mu_\alpha(dy) , \\
& = \langle x; \nabla(P^{\nu_\alpha}_t(f))(x) \rangle +e^{(\alpha-1)t} (1-e^{-\alpha t}) \sum_{j=1}^d D_j^{\alpha-1}\left(P_t^{\nu_\alpha}(\partial_j(f))\right)(x) , \\
& =  \langle x; \nabla(P^{\nu_\alpha}_t(f))(x) \rangle + (e^{\alpha t}-1) \sum_{j=1}^d D_j^{\alpha-1}\circ \partial_j\left(P_t^{\nu_\alpha}(f)\right)(x) ,
\end{align*}
As for the non-local term of $P^{\nu_\alpha}_t(\mathcal{L}^\alpha(f))(x)$,
\begin{align*}
(II):= \int_{\bbr^d} \sum_{j=1}^{d} D^{\alpha-1}_j\partial_j(f) (xe^{-t}+(1-e^{-\alpha t})^{\frac{1}{\alpha}}y) \mu_\alpha(dy).
\end{align*}
Using the commutation formulae with respect to the operators $D^{\alpha-1}_j$ and $\partial_j$,
\begin{align*}
(II)= \sum_{j=1}^d e^{(\alpha - 1)t} D_j^{\alpha-1} \left( P^{\nu_\alpha}_t(\partial_j(f))\right)(x) = e^{\alpha t} \sum_{j=1}^{d} (D_j^{\alpha-1} \partial_j P^{\nu_\alpha}_t(f))(x).
\end{align*}
Thus, 
\begin{align*}
P_t^{\nu_\alpha}(\mathcal{L}^\alpha(f))(x) = - \langle x ; \nabla(P^{\nu_\alpha}_t(f))(x) \rangle + \sum_{j=1}^d D_j^{\alpha-1} \partial_j(P^{\nu_\alpha}_t(f))(x) = \mathcal{L}^\alpha(P^{\nu_\alpha}_t(f))(x).
\end{align*}
Then, for all $f \in \mathcal{S}(\bbr^d)$, all $x\in \bbr^d$ and all $t>0$,
\begin{align*}
\dfrac{P_t^{\nu_\alpha}(f)(x)-f(x)}{t} = \frac{1}{t}  \int_0^t  P^{\nu_\alpha}_s(\mathcal{L}^\alpha(f))(x) ds.
\end{align*}
Since $f\in \mathcal{S}(\bbr^d)$, since
\begin{align*}
\|\mathcal{L}^\alpha(f)\|_{\infty} \leq \|\langle x; \nabla(f) \rangle\|_{\infty} + \|\operatorname{Hess}(f)\|_{\infty} \int_{\mathcal{B}(0,1)} \|u\|^2 \nu_\alpha(du) + 2 \|\nabla(f)\|_{\infty} \int_{\mathcal{B}(0,1)^c} \|u\| \nu_\alpha(du), 
\end{align*}
(here $\operatorname{Hess}(f)(x)$ is the Hessian matrix of $f$ at $x\in \bbr^d$ and $\|\operatorname{Hess}(f)\|_{\infty}$ is given by
$$
\|\operatorname{Hess}(f)\|_{\infty} := \underset{x\in \bbr^d}{\sup} \underset{u \in \bbr^d,\, \|u\|=1}{\sup}\|\operatorname{Hess}(f)(x)(u)\|),
$$
and since $P_s^{\nu_\alpha}$ is contractive on $L^\infty(\mu_\alpha)$, the end of the first step follows easily.\\ 
\\
\textit{Step 2}: Next, let us consider the case $\alpha \in (0,1)$. From \cite[Theorem $4.1$]{AH20_3}, for all $t>0$ and all $x\in \bbr^d$
\begin{align*}
\dfrac{P_t^{\nu_\alpha}(f)(x)-f(x)}{t} = \frac{1}{t}  \int_0^t  \mathcal{L}^\alpha\left(P^{\nu_\alpha}_s(f)\right)(x) ds.
\end{align*} 
Once again, let us prove the commutation property: for all $f \in \mathcal{S}(\bbr^d)$, all $t>0$ and all $x\in \bbr^d$, 
\begin{align}\label{eq:commutation_irr_case}
\mathcal{L}^\alpha\left(P^{\nu_\alpha}_t(f)\right)(x) = P^{\nu_\alpha}_t\left(\mathcal{L}^\alpha(f)\right)(x).
\end{align}
Recall that the Fourier symbol of the non-local part of the operator $\mathcal{L}^\alpha$ (up to the prefactor $\alpha$) is given, for all $\xi \in \bbr^d$, by
\begin{align*}
\tau_\alpha(\xi) = \int_{\bbr^d}\left(e^{i\langle u ; \xi \rangle} - 1\right)\nu_\alpha(du). 
\end{align*}
Making us of scale invariance, for all $c>0$ and all $\xi \in \bbr^d$, $\tau_\alpha(c \xi) = c^\alpha\tau_\alpha(\xi)$. Starting with the drift term, for all $f\in \mathcal{S}(\bbr^d)$, all $t\geq 0$ and all $x\in \bbr^d$,
\begin{align*}
\langle x ; \nabla(P^{\nu_\alpha}_t(f))(x) \rangle = e^{-t} \langle x ; P^{\nu_\alpha}_t(\nabla(f))(x) \rangle & = \frac{e^{-t}}{(2\pi)^d} \sum_{j=1}^d \int_{\bbr^d} x_j \mathcal{F}(\partial_j(f))(\xi) e^{i\langle \xi ; xe^{-t}\rangle}\exp\left((1-e^{-\alpha t})\tau_\alpha(\xi)\right) d\xi. 
\end{align*}
Using spherical coordinates and scale invariance again,
\begin{align*}
\langle x ; \nabla(P^{\nu_\alpha}_t(f))(x) \rangle = \frac{i}{(2\pi)^d} \int_{(0,+\infty)\times \mathbb{S}^{d-1}} r e^{-t} \mathcal{F}(f)(ry) \langle x;y \rangle e^{i e^{-t} r \langle x;y \rangle} \exp\left((1-e^{- \alpha t})r^\alpha\tau_\alpha(y)\right) r^{d-1} dr \sigma_L(dy),
\end{align*}
where $\sigma_L$ is the spherical part of the $d$-dimensional Lebesgue measure on the sphere $\mathbb{S}^{d-1}$. Thus, 
\begin{align*}
\langle x ; \nabla(P^{\nu_\alpha}_t(f))(x) \rangle = \frac{1}{(2\pi)^d } \int_{\mathbb{S}^{d-1}} \left( \int_{(0,+\infty)} \mathcal{F}(f)(ry) \dfrac{d}{dr}\left(e^{i e^{-t} r\langle x;y \rangle}\right)\exp\left((1-e^{-\alpha t})r^\alpha \tau_\alpha(y)\right) r^d dr \right) \sigma_L(dy). 
\end{align*}
Integrating by parts in the radial coordinate gives 
\begin{align*}
\langle x ; \nabla(P^{\nu_\alpha}_t(f))(x) \rangle & = \frac{-1}{(2\pi)^d } \int_{\mathbb{S}^{d-1}} \left( \int_{0}^{+\infty} \dfrac{d}{dr}\left(\mathcal{F}(f)(ry)\right)e^{i e^{-t} r\langle x;y \rangle} \exp\left((1-e^{-\alpha t})r^\alpha \tau_\alpha(y)\right) r^d dr \right) \sigma_L(dy)\\
&+\frac{-1}{(2\pi)^d } \int_{\mathbb{S}^{d-1}} \left( \int_{0}^{+\infty} \dfrac{d}{dr}\left(\exp\left((1-e^{-\alpha t})r^\alpha \tau_\alpha(y)\right)\right)\mathcal{F}(f)(ry)e^{i e^{-t} r\langle x;y \rangle} r^d dr \right) \sigma_L(dy) \\
& + \frac{-1}{(2\pi)^d } \int_{\mathbb{S}^{d-1}} \left( \int_{0}^{+\infty}\mathcal{F}(f)(ry)e^{i e^{-t} r\langle x;y \rangle}\exp\left((1-e^{-\alpha t})r^\alpha \tau_\alpha(y)\right) d r^{d-1} dr \right) \sigma_L(dy).
\end{align*}
Differentiating and using classical coordinates,
\begin{align*}
\langle x ; \nabla(P^{\nu_\alpha}_t(f))(x) \rangle & = \frac{i}{(2\pi)^d} \int_{\mathbb{S}^{d-1}}  \left( \int_{0}^{+\infty} \langle y ; \mathcal{F}(xf)(ry) \rangle e^{i e^{-t} r\langle x;y \rangle} \exp\left((1-e^{-\alpha t})r^\alpha \tau_\alpha(y)\right) r^d dr \right) \sigma_L(dy)\\
&+\frac{-\alpha(1-e^{-\alpha t})}{(2\pi)^d } \int_{\mathbb{S}^{d-1}} \bigg( \int_{0}^{+\infty} \tau_\alpha(y) r^{\alpha-1}\exp\left((1-e^{-\alpha t})r^\alpha \tau_\alpha(y)\right)\\
&\times\mathcal{F}(f)(ry)e^{i e^{-t} r\langle x;y \rangle} r^d dr \bigg) \sigma_L(dy) \\
&+ \frac{-d}{(2\pi)^d } \int_{\bbr^d} \mathcal{F}(f)(\xi) e^{i \langle x;e^{-t} \xi \rangle} \dfrac{\hat{\mu}_\alpha(\xi)}{\hat{\mu}_\alpha(e^{-t}\xi)} d\xi, \\
& = \frac{1}{(2\pi)^d } \int_{\bbr^d} \langle i\xi ;\mathcal{F}(xf)(\xi) \rangle e^{i e^{-t} \langle x; \xi \rangle} \dfrac{\hat{\mu}_\alpha(\xi)}{\hat{\mu}_\alpha(e^{-t}\xi)} d\xi - \frac{d}{(2\pi)^d} \int_{\bbr^d} \mathcal{F}(f)(\xi) e^{i \langle x;e^{-t} \xi \rangle} \dfrac{\hat{\mu}_\alpha(\xi)}{\hat{\mu}_\alpha(e^{-t}\xi)} d\xi \\
&- \dfrac{\alpha (1-e^{-\alpha t})}{(2\pi)^d } \int_{\bbr^d} \mathcal{F}(f)(\xi) \tau_\alpha(\xi) e^{i\langle x;\xi e^{-t}\rangle} \dfrac{\hat{\mu}_\alpha(\xi)}{\hat{\mu}_\alpha(e^{-t}\xi)} d\xi , \\
& = \frac{1}{(2\pi)^d} \int_{\bbr^d} \mathcal{F}(\langle x; \nabla(f)\rangle)(\xi) e^{i \langle x;e^{-t}\xi\rangle} \dfrac{\hat{\mu}_\alpha(\xi)}{\hat{\mu}_\alpha(e^{-t}\xi)}d\xi \\
&- \dfrac{\alpha(1-e^{-\alpha t})}{(2\pi)^d } \int_{\bbr^d} \mathcal{F}(f)(\xi) \tau_\alpha(\xi) e^{i\langle x;\xi e^{-t}\rangle} \dfrac{\hat{\mu}_\alpha(\xi)}{\hat{\mu}_\alpha(e^{-t}\xi)} d\xi \\
& = P^{\nu_\alpha}_t\left(\langle x;\nabla(f)\rangle\right)(x) - \alpha (1-e^{-\alpha t}) P_t^{\nu_\alpha}(\mathcal{A}_\alpha(f))(x) , 
\end{align*}
where $\mathcal{A}_\alpha$ is the non-local part of the operator $\mathcal{L}^\alpha$ (up to the prefactor $\alpha$).~Finally, by scale invariance (once again) and Fourier inversion, for all $f\in \mathcal{S}(\bbr^d)$, all $x\in \bbr^d$ and all $t\geq 0$,
\begin{align*}
\mathcal{A}_\alpha \left(P^{\nu_\alpha}_t(f)\right)(x) & = \frac{1}{(2\pi)^d} \int_{\bbr^d} \mathcal{F}(f)(\xi) e^{i \langle x;\xi e^{-t} \rangle} \dfrac{\hat{\mu}_\alpha(\xi)}{\hat{\mu}_\alpha(e^{-t}\xi)} \tau_\alpha(e^{-t} \xi) d\xi, \\
& = \frac{e^{-\alpha t}}{(2\pi)^d} \int_{\bbr^d} \mathcal{F}(f)(\xi) e^{i \langle x;\xi e^{-t} \rangle} \dfrac{\hat{\mu}_\alpha(\xi)}{\hat{\mu}_\alpha(e^{-t}\xi)} \tau_\alpha(\xi) d\xi , \\
& = e^{-\alpha t} P^{\nu_\alpha}_t(\mathcal{A}_\alpha(f))(x)
\end{align*}
The commutation formula \eqref{eq:commutation_irr_case} follows finishing Step $2$.\\
\\
\textit{Step 3}: Finally, let us treat in details the very particular case $\alpha = 1$.~The Fourier symbol of the non-local part of the operator $\mathcal{L}^1$ is given, for all $\xi \in \bbr^d$, by
\begin{align*}
\tau_1(\xi) = \int_{\bbr^d} \left(e^{i\langle u ; \xi \rangle} - 1 - i\langle u ; \xi \rangle \bbone_{\|u\|\leq 1}\right)\nu_1(du). 
\end{align*}
Then, for all $c>0$,
\begin{align*}
\tau_1(c \xi) = c \int_{\mathcal{B}(0,c)} \left(e^{i\langle  u ; \xi \rangle} - 1 - i\langle u ; \xi \rangle\right) \nu_1(du) + c \int_{\mathcal{B}(0,c)^c} \left(e^{i\langle  u ; \xi \rangle} - 1\right) \nu_1(du).
\end{align*}
Now, assuming that $c<1$,
\begin{align*}
\tau_1(c\xi) = c\tau_1(\xi) + c \int_{c \leq \|u\| \leq 1} i \langle u ;\xi \rangle \nu_1(du).
\end{align*}
Similarly, for $c>1$, 
\begin{align*}
\tau_1(c\xi) & = c \tau_1(\xi) - c \int_{1 \leq \|u\| \leq c} i \langle u ;\xi \rangle \nu_1(du).
\end{align*}
Since $\sigma$ is symmetric, it is clear that $\int_{\mathbb{S}^{d-1}} y \sigma(dy) = 0$.~Thus, for all $\xi\in \bbr^d$ and all $c>0$, $\tau_1(c\xi) = c\tau_1(\xi)$. In what follows denote by $\mathcal{A}_1$ the non-local part of the operator $\mathcal{L}^1$ given, for all $f \in \mathcal{S}(\bbr^d)$ and all $x \in \bbr^d$, by
\begin{align*}
\mathcal{A}_1(f)(x) = \int_{\bbr^d} (f(x+u)-f(x) - \langle \nabla(f)(x) ; u \rangle \bbone_{\|u\|\leq 1} ) \nu_1(du) = \frac{1}{(2\pi)^d} \int_{\bbr^d} \mathcal{F}(f)(\xi)e^{i\langle x ; \xi \rangle} \tau_1(\xi) d\xi.
\end{align*}
Recall also that the action of the operator $P^{\nu_1}_t$ is given, for all $x\in \bbr^d$ and all $t\geq 0$, by
\begin{align*}
P^{\nu_1}_t(f)(x) = \frac{1}{(2\pi)^d} \int_{\bbr^d} \mathcal{F}(f)(\xi) e^{i \langle x; \xi e^{-t} \rangle} \dfrac{\hat{\mu}_1(\xi)}{\hat{\mu}_1(e^{-t}\xi)} d\xi = \frac{1}{(2\pi)^d} \int_{\bbr^d} \mathcal{F}(f)(\xi) e^{i\langle x ; \xi e^{-t}\rangle} \exp\left((1-e^{-t})\tau_1(\xi)\right) d\xi. 
\end{align*}
Then, 
\begin{align*}
\mathcal{A}_1\left(P^{\nu_1}_t(f)\right)(x) & = \int_{\bbr^d} \mathcal{F}(f)(\xi) e^{i\langle x; \xi e^{-t}\rangle} \dfrac{\hat{\mu}_1(\xi)}{\hat{\mu}_1(e^{-t}\xi)} \tau_1(e^{-t}\xi) \frac{d\xi}{(2\pi)^d} = e^{-t} P_t^{\nu_1}\left(\mathcal{A}_1(f)\right)(x). 
\end{align*}
A reasoning completely analogous to the one in Step $2$ of the proof ensures that, for all $f\in \mathcal{S}(\bbr^d)$, all $x \in \bbr^d$ and all $t\geq 0$,
\begin{align*}
\mathcal{L}^1\left(P^{\nu_1}_t(f)\right)(x) = P^{\nu_1}_t(\mathcal{L}^1(f))(x). 
\end{align*} 
The end of the proof follows similarly. 
\end{proof}
\noindent
At last, one can see that, 
via \eqref{def:stable}, 
\begin{align*}
\langle -\mathcal{L}^{\alpha}(f); f \rangle_{L^2(\mu_\alpha)} = \frac{\alpha}{2}\int_{\bbr^d} \int_{\bbr^d}(f(x+u)-f(x))^2 \nu_\alpha(du)\mu_\alpha(dx), 
\end{align*}
for all $f \in \mathcal{S}(\bbr^d)$.  
Thus, observe that the semigroup $(\mathcal{P}_t)_{t\geq 0}$ is Markovian since the normal contractions 
operate on the closed form associated with it.  
Next, the measure $\mu_\alpha$ is invariant for $(\mathcal{P}_t)_{t\geq 0}$ in that, 
for all $f\in L^2(\mu_\alpha)$,
\begin{align*}
\int_{\bbr^d} \mathcal{P}_t(f)(x) \mu_\alpha(dx)= \int_{\bbr^d} f(x) \mu_\alpha(dx), 
\end{align*}
and, by duality arguments, $\mathcal{P}_t(1)=1$, for all $t\geq 0$.  
These facts allow to extend $\mathcal{P}_t$, $t\geq 0$, to a contraction on 
every $L^p(\mu_\alpha)$, for all $p\in [1,+\infty]$ by appealing to \cite[Theorem $X.55$]{RS2}. 
These extensions form $C_0$-semigroups on $L^p(\mu_\alpha)$, for all $p\in [1,+\infty)$.  
Moreover, and finally, the following representations hold true on every 
$L^p(\mu_\alpha)$, $p\in (1,+\infty)$:

\begin{prop}\label{prop:squared_Mehler_Lp}
Let $\alpha \in (0,2)$ and let $\nu_\alpha$ be a non-degenerate symmetric L\'evy measure on $\bbr^d$ satisfying \eqref{eq:scale}.~Let $\mu_\alpha$ be the non-degenerate $\alpha$-stable probability measure associated with $\nu_\alpha$ and defined by \eqref{def:stable}.  
Let $p \in (1,+\infty)$, let $(P^{\nu_\alpha}_t)_{t\geq 0}$ be the $L^p(\mu_\alpha)$-extension of the 
semigroup of operators defined by \eqref{eq:StOUSM} and let $((P^{\nu_\alpha}_t)^*)_{t\geq 0}$ be the dual semigroup of $(P^{\nu_\alpha}_t)_{t\geq 0}$ from $L^{q}(\mu_\alpha)$ to $L^{q}(\mu_\alpha)$ 
with $q=p/(p-1)$.  
Then, for all $s,t\geq 0$, $P^{\nu_\alpha}_t$ and $(P^{\nu_\alpha}_s)^*$ commute and 
the family of operators $(\mathcal{P}_t)_{t\geq 0}$ defined, for all $t\geq 0$, by
\begin{align}\label{eq:SqaredMehlerSG}
\mathcal{P}_t=P^{\nu_\alpha}_{t/\alpha} \circ (P^{\nu_\alpha}_{t/\alpha})^*,
\end{align}
is a $C_0$-semigroup of contractions on $L^p(\mu_\alpha)$.  
Moreover, if $(\mathcal{L}^\alpha, \mathcal{D}(\mathcal{L}^\alpha))$ and $((\mathcal{L}^\alpha)^*, \mathcal{D}((\mathcal{L}^\alpha)^*))$ are the generators of the $C_0$-semigroups of 
contractions $(P^{\nu_\alpha}_t)_{t\geq 0}$ and $((P^{\nu_\alpha}_t)^*)_{t\geq 0}$ respectively, then $\mathcal{D}(\mathcal{L}^\alpha) \cap \mathcal{D}((\mathcal{L}^\alpha)^*)$ is invariant with respect to $\mathcal{P}_t$, for $t\geq 0$, is dense in $L^p(\mu_\alpha)$ and, therefore, is a core for $\mathcal{L}$, 
the closure of the sum $(\mathcal{L}^\alpha + (\mathcal{L}^\alpha)^*)/\alpha$. Finally, for all $f \in \mathcal{D}(\mathcal{L}^\alpha)\cap \mathcal{D}((\mathcal{L}^\alpha)^*)$
\begin{align*}
\mathcal{L}(f)=\frac{1}{\alpha}\left(\mathcal{L}^\alpha(f)+(\mathcal{L}^\alpha)^*(f)\right).
\end{align*}
\end{prop}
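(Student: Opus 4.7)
The plan is to lift the $L^2(\mu_\alpha)$ statements established in Theorem~\ref{def:squared_Mehler_L2} to general $L^p(\mu_\alpha)$, $p\in(1,+\infty)$, by a density argument, and then invoke Trotter's product formula \cite[Theorem~1]{Trotter59} for commuting $C_0$-semigroups of contractions. As a preliminary, I would note that both $(P^{\nu_\alpha}_t)_{t\geq 0}$ and $((P^{\nu_\alpha}_t)^*)_{t\geq 0}$ extend to compatible families of Markovian $C_0$-semigroups of contractions on every $L^p(\mu_\alpha)$, $p\in [1,+\infty)$: the first by the Markov kernel representation~\eqref{eq:StOUSM} together with the invariance of $\mu_\alpha$, the second by duality, combined with the fact that $\mu_\alpha$ is also invariant for the dual (so that $(P^{\nu_\alpha}_t)^*$ is a positive contraction with $(P^{\nu_\alpha}_t)^*1=1$, in particular acts on $L^\infty(\mu_\alpha)$).

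To transfer commutativity to $L^p(\mu_\alpha)$, I would exploit that $\mu_\alpha$ is a probability measure, so $L^\infty(\mu_\alpha)\subset L^p(\mu_\alpha)\cap L^2(\mu_\alpha)$ and $L^\infty(\mu_\alpha)$ is dense in $L^p(\mu_\alpha)$. For $f\in L^\infty(\mu_\alpha)$, the identity $P^{\nu_\alpha}_t\circ(P^{\nu_\alpha}_s)^*f=(P^{\nu_\alpha}_s)^*\circ P^{\nu_\alpha}_tf$ is valid in $L^2(\mu_\alpha)$ by Theorem~\ref{def:squared_Mehler_L2}; since both sides also lie in $L^p(\mu_\alpha)$ and the $L^p$- and $L^2$-extensions of the operators agree on $L^\infty(\mu_\alpha)$, the identity holds in $L^p(\mu_\alpha)$. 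Continuity of the two compositions on $L^p(\mu_\alpha)$ and density of $L^\infty(\mu_\alpha)$ in $L^p(\mu_\alpha)$ then yield commutativity on all of $L^p(\mu_\alpha)$. With commutativity in hand, \cite[Theorem~1]{Trotter59} applied to $(P^{\nu_\alpha}_{t/\alpha})_{t\geq 0}$ and $((P^{\nu_\alpha}_{t/\alpha})^*)_{t\geq 0}$ gives at once that $(\mathcal{P}_t)_{t\geq 0}$ is a $C_0$-semigroup of contractions on $L^p(\mu_\alpha)$, that its generator $\mathcal{L}$ is the closure of $(\mathcal{L}^\alpha+(\mathcal{L}^\alpha)^*)/\alpha$ defined on $\mathcal{D}(\mathcal{L}^\alpha)\cap\mathcal{D}((\mathcal{L}^\alpha)^*)$, and that this intersection is a core.

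The most delicate point is the density of $\mathcal{D}(\mathcal{L}^\alpha)\cap\mathcal{D}((\mathcal{L}^\alpha)^*)$ in $L^p(\mu_\alpha)$, which I would establish directly: for any $g\in L^p(\mu_\alpha)$ and $\varepsilon>0$, set $h_\varepsilon=\varepsilon^{-2}\int_0^\varepsilon\int_0^\varepsilon P^{\nu_\alpha}_s\circ(P^{\nu_\alpha}_t)^*(g)\,ds\,dt$. Standard Bochner-integral averaging for a $C_0$-semigroup places $h_\varepsilon$ in $\mathcal{D}(\mathcal{L}^\alpha)$ (through the outer integral); swapping the order via commutativity and Fubini places it in $\mathcal{D}((\mathcal{L}^\alpha)^*)$ as well. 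Strong continuity of both semigroups on $L^p(\mu_\alpha)$ gives $h_\varepsilon\to g$ in $L^p(\mu_\alpha)$ as $\varepsilon\to0^+$. Finally, invariance of the intersection under $\mathcal{P}_t$ is immediate: each $P^{\nu_\alpha}_{t/\alpha}$ preserves $\mathcal{D}(\mathcal{L}^\alpha)$ and, by commutativity, also preserves $\mathcal{D}((\mathcal{L}^\alpha)^*)$ (because $(\mathcal{L}^\alpha)^*P^{\nu_\alpha}_{t/\alpha}f=P^{\nu_\alpha}_{t/\alpha}(\mathcal{L}^\alpha)^*f$ for $f$ in that domain), and symmetrically for the dual factor. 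The main obstacle is thus concentrated in the $L^p$-commutativity transfer; once that is secured, the remaining content is an application of the Trotter product formula together with the smoothing-by-averaging trick for density.
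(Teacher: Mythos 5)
Your proposal is correct and takes essentially the same route as the paper: commutativity of $P^{\nu_\alpha}_t$ and $(P^{\nu_\alpha}_s)^*$ on $L^p(\mu_\alpha)$ is obtained by reducing to the $L^2(\mu_\alpha)$ case of Theorem~\ref{def:squared_Mehler_L2} on a dense class (you use $L^\infty(\mu_\alpha)$ and compatibility of the extensions, where the paper uses $\mathcal{S}(\bbr^d)$ and a chain of $L^p$--$L^q$ duality pairings), and the semigroup, generator and core assertions then follow from the product-semigroup theorem. The only cosmetic difference is that the paper delegates the density and core statements to \cite[Theorem 1]{Trotter59} and \cite[2.7 Product Semigroups]{EN00}, whereas you reprove the density of $\mathcal{D}(\mathcal{L}^\alpha)\cap\mathcal{D}((\mathcal{L}^\alpha)^*)$ explicitly via the double-averaging regularization; both are valid.
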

 
\begin{proof}
Let $p \in (1,+\infty)$ and let $s,t \geq 0$. Since $\mathcal{S}(\bbr^d)$ is dense in $L^p(\mu_\alpha)$ and since $P^{\nu_{\alpha}}_t$ and $(P^{\nu_\alpha}_s)^*$ are bounded operators on $L^p(\mu_\alpha)$ (with norms less or equal to $1$), it is sufficient to prove, for all $f\in \mathcal{S}(\bbr^d)$, that
\begin{align*}
P^{\nu_{\alpha}}_t \circ (P^{\nu_\alpha}_s)^* (f) = (P^{\nu_\alpha}_s)^*  \circ P^{\nu_{\alpha}}_t (f).
\end{align*}
Let $g \in \mathcal{S}(\bbr^d)$.  Since the different extensions are compatible and by 
Theorem~\ref{def:squared_Mehler_L2}, 
\begin{align*}
\langle P^{\nu_{\alpha}}_t \circ (P^{\nu_\alpha}_s)^* (f) ; g\rangle_{L^p(\mu_\alpha),L^{q}(\mu_\alpha)}&= \langle (P^{\nu_\alpha}_s)^* (f) ; (P^{\nu_{\alpha}}_t)^*(g)\rangle_{L^p(\mu_\alpha),L^{q}(\mu_\alpha)},\\
& = \langle (P^{\nu_\alpha}_s)^*(f) ; (P^{\nu_{\alpha}}_t)^*(g)\rangle_{L^2(\mu_\alpha),L^{2}(\mu_\alpha)},\\
& = \langle P^{\nu_{\alpha}}_t(f) ; P^{\nu_\alpha}_s(g)\rangle_{L^2(\mu_\alpha),L^{2}(\mu_\alpha)},\\
& = \langle P^{\nu_{\alpha}}_t(f) ; P^{\nu_\alpha}_s(g)\rangle_{L^p(\mu_\alpha),L^{q}(\mu_\alpha)},\\
& =  \langle (P^{\nu_\alpha}_s)^*\circ P^{\nu_{\alpha}}_t(f) ; g\rangle_{L^p(\mu_\alpha),L^{q}(\mu_\alpha)},
\end{align*}
which implies the commutativity of $ P^{\nu_{\alpha}}_t $ and $(P^{\nu_\alpha}_s)^*$, for all $s,t\geq 0$. 
It then follows that $(\mathcal{P}_t)_{t\geq 0}$ is a $C_0$-semigroup of contractions on $L^p(\mu_\alpha)$ which generator, denoted by $\mathcal{L}$, is the closure of 
the sum $(\mathcal{L}^{\alpha} + (\mathcal{L}^{\alpha})^*)/\alpha$. Finally, \cite[$2.7$ Product Semigroups]{EN00} finishes the proof of the proposition. 
\end{proof}

\end{document}